\journal{Nonlinear Analysis: Theory, Methods $\&$ Applications}
\newtheorem{remark}{Remark}[section]
\newtheorem{corollary}{Corollary}[section]
\newtheorem{lemma}{Lemma}[section]
\newtheorem{observation}{Observation}[section]
\newtheorem{theorem}{Theorem}[section]
\theoremstyle{definition}
\newtheorem{definition}{Definition}[section]
\newtheorem{notation}{Notation}[section]
\begin{document}
	\renewcommand*{\today}{July 17, 2018}
	\pagestyle{plain}
	\relpenalty=9999
	\binoppenalty=9999
	
	\title{Unilateral sources and sinks of an activator in reaction-diffusion systems exhibiting diffusion-driven instability}
	
	\author[ZCU FAV]{Martin Fencl}
	\ead{fenclm37@kma.zcu.cz}
	\author[ZCU FAV,MU AV CR]{Milan Ku{\v c}era}
	\ead{kucera@math.cas.cz}
	\address[ZCU FAV]{Dept. of Mathematics, Faculty of Applied Sciences, University of West Bohemia in Pilsen, Univerzitn\'{i} 8, 30614 Plze\v{n}, Czech Republic}
	\address[MU AV CR]{Institute of Mathematics, Czech Academy of Sciences, {\v Z}itn{\' a} 25, 11567 Prague 1, Czech Republic}

	\begin{abstract}
		A reaction-diffusion system exhibiting Turing's diffusion driven 
		instability is considered. The equation for an activator is supplemented by 
		unilateral terms of the type $s_{-}(\mathbf{x})u^{-}$, $s_{+}(\mathbf{x})u^{+}$ describing sources 
		and sinks active only if the concentration decreases below and increases 
		above, respectively, the value of the basic spatially constant solution which is shifted to zero. 
		We show that the domain of diffusion parameters in which spatially 
		non-homogeneous stationary solutions can bifurcate from that constant 
		solution is smaller than in the classical case without unilateral terms. It is 
		a dual information to previous results stating that analogous terms in the equation for an inhibitor imply the existence of bifurcation points 
		even in diffusion parameters for which bifurcation is excluded without unilateral sources. The case 
		of mixed (Dirichlet-Neumann) boundary conditions as well as that of pure 
		Neumann conditions is described.
	\end{abstract}
	\begin{keyword}
		reaction-diffusion systems \sep unilateral terms \sep Turing's patterns \sep positively homogeneous operators \sep maximal eigenvalue 
		\MSC[] 35K57, 35B32, 35J57, 35J50, 92C15
	\end{keyword}

	\maketitle		
	\section{Introduction}
	\noindent Let's consider a reaction-diffusion system
	\begin{equation}\label{Eq:general model}
	\begin{aligned}
	\frac{\partial u}{\partial t}&= d_{1}\Delta u + f(u,v) + {\tilde{f}_{-}(\mathbf{x},u^{-}) - \tilde{f}_{+}(\mathbf{x},u^+}),\\
	\frac{\partial v}{\partial t}&= d_{2}\Delta v + g(u,v) \quad\text{in}\ \Omega\times\left[0,+\infty\right) 
	\end{aligned}		
	\end{equation}
	where $\Omega\subset\mathbb{R}^{N}$ is a bounded domain with Lipschitz boundary, 
	$d_{1}$ and $d_{2}$ are positive parameters (diffusion coefficients),
	$f,g:\mathbb{R}\times \mathbb{R} \to \mathbb{R}$ are real differentiable functions, $\tilde{f}_{-},\tilde{f}_{+}:\Omega \times \mathbb{R} \to \mathbb{R}$ are functions satisfying Carathéodory conditions and such that there exist 
	\begin{equation}\label{Eq:tilda terms cond}
	\begin{aligned}
	s_{-}(\mathbf{x}):=\frac{\partial \tilde{f}_{-}}{\partial \xi}(\mathbf{x},\xi)|_{\xi=0}\ge 0,\quad 
	s_{+}(\mathbf{x}):=\frac{\partial \tilde{f}_{+}}{\partial \xi}(\mathbf{x},\xi)|_{\xi=0}\ge 0 \quad\text{for a.a. } \mathbf{x}\in \Omega,
	s_\pm \in L_\infty (\Omega).	
	\end{aligned}	
	\end{equation}
	As usually, $u^{+} = \max\{u , 0 \}$ and $u^{-} = \max\{-u , 0 \}$ denotes the positive and negative, respectively, part of $u$.
	We will always assume that
	\begin{equation}\label{Eq:zero fixed point assumption}
	f(0,0)=g(0,0)=\tilde{f}_{-}(\mathbf{x},0)=\tilde{f}_{+}(\mathbf{x},0)=0 \quad\text{for a.a. } \mathbf{x}\in \Omega.
	\end{equation}	
	Our system will be supplemented by boundary conditions
	\begin{equation}\label{Eq:zero boundary conditions}
	\begin{aligned}
	& u=v=0\quad \text{on }\Gamma_{D},\\
	& \frac{\partial u}{\partial n} = \frac{\partial v}{\partial n} = 0 \quad \text{on}\ \Gamma_{N},
	\end{aligned}	
	\end{equation}
	where $n$ is the unit outward-pointing normal vector of the boundary $\partial\Omega$ and $\Gamma_{N},\Gamma_{D}$ are open disjoint subsets of $\partial\Omega$, $\partial\Omega = \overline{\Gamma_{D}}\cup \overline{\Gamma_{N}}$.\\
	\indent Apparently the problem $(\ref{Eq:general model}),(\ref{Eq:zero boundary conditions})$ has always the trivial solution $[0,0]$. Our system should describe a reaction of two chemicals, e.g. morphogens, having a basic positive spatially constant steady state $[\overline{u},\overline{v}]$, that means we should assume in fact $f(\overline{u},\overline{v})=g(\overline{v},\overline{v})=\tilde{f}_{-}(\mathbf{x},\overline{u})=\tilde{f}_{+}(\mathbf{x},\overline{u})=0$ instead of (\ref{Eq:zero fixed point assumption}), but as usually, we can shift the positive steady state to zero and we obtain our system satisfying (\ref{Eq:zero fixed point assumption}). Let us emphasize that then the functions $u,v$ do not describe concentrations of the reactants, but their differences from the basic constant stationary state 
	$[\overline{u},\overline{v}]$.\\	
	\indent We will consider assumptions under which the problem $(\ref{Eq:general model}),(\ref{Eq:zero boundary conditions})$ with
	$\tilde{f}_{-}\equiv \tilde{f}_{+} \equiv 0$ exhibits diffusion driven instability discovered in the famous Turing's paper \cite{Turing}. That means if $\tilde{f}_{-}\equiv \tilde{f}_{+} \equiv 0$ then the trivial solution $[0,0]$ is stable as a solution of the corresponding problem without diffusion (ODE's obtained for $d_1=d_2=0$), but as a solution  of the whole system it is unstable for $[d_1,d_2]$ from a certain subdomain $D_U$ of the positive quadrant $\mathbb{R}_{+}^{2}$ (domain of instability), and stable only for $[d_1,d_2] \in D_S=\mathbb{R}_{+}^{2}\setminus \overline{D_U}$ (domain of stability). Spatially non-homogeneous steady states bifurcate from the basic constant equilibrium in some points of $\overline{D_U}$, but such a bifurcation is excluded in $D_S$. Let us note that spatially non-homogeneous steady states can describe spatial patterns in some models in biology.\\
	\indent {\bf Our goal} is to prove that if we add unilateral terms $\tilde{f}_{-}(\mathbf{x},u^-)$, $\tilde{f}_{+}(\mathbf{x},u^+)$, then the domain of diffusion coefficients where spatially non-homogeneous steady states can bifurcate is smaller than $\overline{D_U}$. In fact we will prove more, see below. An example of unilateral terms can be
	\begin{equation*}
	\tilde{f}_{-}(\mathbf{x},u^-)=s_{-}(\mathbf{x})\frac{u^-}{1+\varepsilon u^-},\quad \tilde{f}_{+}(\mathbf{x},u^+)=s_{+}(\mathbf{x})\frac{u^+}{1+\varepsilon u^+}.
	\end{equation*}	
	The stationary system corresponding to (\ref{Eq:general model}) can be written in the form
	\begin{equation}\label{Eq:stationary problem nonlinear with unilateral terms}
	\begin{aligned}
	d_{1}\Delta u + b_{1,1}u + b_{1,2}v + n_{1}(u,v) + 
	\tilde{f}_{-}(\mathbf{x},u^-) - \tilde{f}_{+}(\mathbf{x},u^+) &=0,\\
	d_{2}\Delta v + b_{2,1}u + b_{2,2}v + n_{2}(u,v) &=0, 
	\end{aligned}		
	\end{equation}
	where $\mathbf{B}:=(b_{i,j})_{i,j=1,2}$ is the Jacobi matrix of the mappings $f,g$ at $[0,0]$ and 
	the functions $n_{1},n_{2}$ are higher order terms, i.e.    
	\begin{equation}\label{Eq:nonlinearities small at zero}
	n_{1,2}(u,v)= o(|u|+|v|) \text{ as } |u|+|v|\rightarrow 0. 
	\end{equation}
	(The nonlinear part in the first equation could be written also in the form $s_{-}(\mathbf{x}) u^{-} - s_{+}(\mathbf{x})u^{+} + \tilde n_{1}(\mathbf{x},u,v)$, that means a homogenization + higher order terms dependent on $\mathbf{x}$).\\
	\indent We will always assume that the following conditions necessary for Turing's diffusion driven instability mentioned above are fulfilled:
	\begin{equation}\label{Eq:B assumption}
	b_{1,1}>0, b_{2,2}<0, b_{1,2}b_{2,1}<0, tr(\mathbf{B})<0, det(\mathbf{B})>0.
	\end{equation}
	The first three conditions in $(\ref{Eq:B assumption})$ correspond to an activator-inhibitor system (for $b_{1,2}<0, b_{2,1}>0$),  or to a substrate depletion system 
	(for $b_{1,2}>0, b_{2,1}<0$), see e.g. \cite{Murray2}. The last two conditions ensure the stability of $[0,0]$ as a solution of the system without any diffusion.\\	
	\indent We will work mainly with the homogenized system 
	\begin{equation}\label{Eq:stationary problem with unilateral terms}
	\begin{aligned}
	d_{1}\Delta u + b_{1,1}u + b_{1,2}v + 
	s_{-}(\mathbf{x})u^{-} - s_{+}(\mathbf{x})u^{+} &= 0,\\
	d_{2}\Delta v + b_{2,1}u + b_{2,2}v &= 0.
	\end{aligned}
	\end{equation}
	We will show more than what is mentioned above, namely that critical points, i.e. couples $[d_1,d_2]$ for which the homogenized problem	(\ref{Eq:stationary problem with unilateral terms}), (\ref{Eq:zero boundary conditions}) has a non-trivial solution, can exist only in a smaller domain than in the classical case $\tilde{s}_{-}=\tilde{s}_{+}\equiv 0$. Since any bifurcation point is simultaneously a critical point, the main goal mentioned above will follow. A similar result was proved in \cite{Kucera-u} for the case of unilateral sources on the boundary described by quasi-variational inequalities, but we consider the description of unilateral sources and sinks by the terms $\tilde{f}_{-}(\mathbf{x},u^-)$, $\tilde{f}_{+}(\mathbf{x},u^+)$ more natural. We will briefly discuss also problems with unilateral terms of the type $s_-(x)u^-$, $s_+(x)u^+$ on the boundary.\\	
	\indent {\bf Main ideas} are similar to those from \cite{Kucera-u}. Considering a weak formulation, we will write our problem as a system of operator equations in Sobolev space and we will consider an arbitrary fixed $d_2$. Expressing the variable $v$ from the second equation and substituting it to the first equation, we reduce the originally non-symmetric problem to a single equation with a positively homogeneous operator having a potential. A variational characterization of its largest eigenvalue enables us to compare the largest eigenvalue corresponding to the problem with and without unilateral terms, which is simultaneously the largest $d_1$ for which $[d_1,d_2]$ is a critical point of the original system with and without unilateral terms.\\	
	\indent Let us note that if unilateral sources of the second variable $v$ (inhibitor) are supplemented in the second equation then bifurcation of spatial patterns occurs even in the domain $D_S$, where it is excluded for the classical case without unilateral sources. See e.g. \cite{Kucera-Vath} and references therein for the case of sources described by variational inequalities, \cite{Eisner-Kucera-Vath} for unilateral sources described by multivalued maps and \cite{Eisner-Kucera}, \cite{Kucera-Navratil-Dirichlet} for the case of unilateral terms similar to the current paper. These results motivated numerical experiments \cite{Vejchodsky-Kucera} showing that for a concrete model also spatial patterns arise from small initial perturbations for diffusion parameters from $D_S$, where it is not the case without unilateral sources. The sense of these results is positive because one of the problems of Turing's theory is that the set of diffusion parameters for which diffusion-driven instability occurs is too small, so unilateral sources for $v$ improve this situation. The result of the current paper is opposite, unilateral sources for $u$ makes larger the set of diffusion parameters for which bifurcation of spatial patterns is exluded, i.e. for which no small spatial patterns can exist. We believe that, at least in some cases, it is a signal that the same is true for the set of parameters for which spatial patterns evolve from small perturbations of the basic spatially constant steady state. It agrees with numerical experiments which will be published in a forthcoming paper. This seems to be a negative result, but perhaps there are situations when it would be valuable to understand how to prevent evolution of spatial patterns. For instance, patterns play a role in models of tumors, see e.g. \cite{tumors} and references therein. In spite of that the paper \cite{tumors} has completely different goals, it can be perhaps motivating from the point of view mentioned, in particular its Section 5.\\	
	\indent We present the basic general assumptions and definitions in Section $\ref{Sec:basic ass a def}$. Main results of this paper are formulated and discussed in Section \ref{Sec:main results}. In Section \ref{Sec:abstract fomulation} we formulate our problem as a system of operator equations in Sobolev space and we describe properties of the corresponding operators. Section \ref{Sec:fixed d2} concerns a reduction of our system to a single equation with a positively homogeneous operator and a variational characterization of its largest eigenvalue.	A comparison of largest eigenvalues and consequently also critical points with and without unilateral terms by using this variational characterization is given. The proofs of the main results are done in Section \ref{Sec:proofs}.
	
	\section{Basic assumptions and definitions}\label{Sec:basic ass a def}
	We will always suppose that there exists $c\in\mathbb{R}$ such that
	\begin{eqnarray}
	|n_{j}(\chi,\xi)|\leq c(1+|\chi|^{q-1} + |\xi|^{q-1})\quad\text{for all } \chi,\xi\in\mathbb{R}, j=1,2,\label{Eq:nonlinearities constrains}\\
	|\tilde{f}_{\mp}(\mathbf{x},\xi)|\leq c(1+|\xi|^{q-1})\quad\text{for all } \xi\in\mathbb{R} \text{ and a.a. } \mathbf{x}\in\Omega,\label{Eq:tilda f constrains}
	\end{eqnarray} 
	with some $q>2$ if $N=2$ or $2<q<\frac{2N}{N-2}$ if $N>2$. In the dimension $N=1$ no growth assumptions are necessary.\\
	\indent Besides systems $(\ref{Eq:stationary problem with unilateral terms})$ and $(\ref{Eq:stationary problem nonlinear with unilateral terms})$ we will discuss systems
	\begin{equation}\label{Eq:stationary problem}
	\begin{aligned}
	d_{1}\Delta u + b_{1,1}u + b_{1,2}v = 0,\\
	d_{2}\Delta v + b_{2,1}u + b_{2,2}v = 0
	\end{aligned}
	\end{equation}
	and
	\begin{equation}\label{Eq:stationary problem nonlinear}
	\begin{aligned}
	d_{1}\Delta u + b_{1,1}u + b_{1,2}v +n_{1}(u,v) = 0,\\
	d_{2}\Delta v + b_{2,1}u + b_{2,2}v +n_{2}(u,v) = 0.
	\end{aligned}
	\end{equation}
	\indent By solutions we will always mean weak solutions in the space	
	\begin{equation}\label{Eq:define function space}
	H^{1}_{D}(\Omega) := \{\phi\in W^{1,2}(\Omega):\, \phi=0 \text{ on } \Gamma_{D} \text{ in the sense of traces}\}.
	\end{equation}
	If $\Gamma_{D} = \emptyset$, then the space $H^{1}_{D}$ is actually the whole Sobolev space $W^{1,2}$ equipped with the standard inner product
	\begin{equation}\label{Eq:inner product N}
	(u,\varphi)_{H^{1}_{D}} = (u,\varphi)_{W^{1,2}} = \int_{\Omega} \left(\nabla u \nabla\varphi + u \varphi\right) \ d\Omega
	\end{equation}
	and the Sobolev norm $\lVert u \rVert_{W^{1,2}} = \left(\int_{\Omega} (\nabla u)^{2} + u^{2} \ d\Omega\right)^{\frac{1}{2}}$. If $\Gamma_{D}\neq\emptyset$, then we will use the inner product
	\begin{equation}\label{Eq:inner product D-N}
	(u,\varphi)_{H^{1}_{D}} = \int_{\Omega} \nabla u \nabla\varphi \ d\Omega
	\end{equation}
	and the norm $\lVert u\rVert_{H^{1}_{D}} = \left( \int_{\Omega} (\nabla u)^2 \ d\Omega\right)^{\frac{1}{2}}$ equivalent to the classical Sobolev norm. 
	\begin{definition}[Critical point]\label{Def:critical point}\mbox{}\\
		A parameter $d=[d_{1},d_{2}]\in\mathbb{R}^{2}_{+}$ will be called a critical point of $(\ref{Eq:stationary problem}),(\ref{Eq:zero boundary conditions})$ or $(\ref{Eq:stationary problem with unilateral terms}),(\ref{Eq:zero boundary conditions})$ if there exists a non-trivial (weak) solution of $(\ref{Eq:stationary problem}),(\ref{Eq:zero boundary conditions})$ or $(\ref{Eq:stationary problem with unilateral terms}),(\ref{Eq:zero boundary conditions})$, respectively.
	\end{definition}
	\begin{definition}[Bifurcation point]\label{Def:bifurcation point}\mbox{}\\
		A parameter $d^{0}=[d^{0}_{1},d^{0}_{2}]\in\mathbb{R}^{2}_{+}$ will be called a bifurcation point of $(\ref{Eq:stationary problem nonlinear}),(\ref{Eq:zero boundary conditions})$ or $(\ref{Eq:stationary problem nonlinear with unilateral terms}),(\ref{Eq:zero boundary conditions})$ if in any neighbourhood of $[d^{0},0,0]\in\mathbb{R}^{2}_{+}\times H^{1}_{D} \times H^{1}_{D}$ there exists $[d,W] = [d,u,v]$, $\lVert W \rVert\neq 0$ satisfying  $(\ref{Eq:stationary problem nonlinear}),(\ref{Eq:zero boundary conditions})$ or $(\ref{Eq:stationary problem nonlinear with unilateral terms}),(\ref{Eq:zero boundary conditions})$, respectively.
	\end{definition}
	\begin{remark}\label{Rem:Laplacian eigenvalues}
		Let's consider the problem
		\begin{equation}\label{Eq:Laplace eigenvalue problem}
		\begin{aligned}
		-\Delta u &= \kappa u,\\
		u&=0\text{ on }\Gamma_{D},\\
		\frac{\partial u}{\partial n} &= 0\text{ on }\Gamma_{N}.
		\end{aligned}
		\end{equation}
		The eigenvalues of $(\ref{Eq:Laplace eigenvalue problem})$ form a non-negative non-decreasing sequence $\kappa_{j}$ with $j = 1,2,\ldots$ (for $\Gamma_{D}\neq\emptyset$) or $j=0,1,2,\ldots$ (for $\Gamma_{D}=\emptyset$). The first eigenvalue is always simple. In the case $\Gamma_{D}\neq\emptyset$, the eigenfunction $e_{1}$ corresponding to the first eigenvalue $\kappa_{1}$ does not change the sign on the domain $\Omega$. In the case $\Gamma_{D}=\emptyset$, the eigenfunction $e_{0}$ corresponding to the first eigenvalue $\kappa_{0}=0$ is constant. Other eigenfunctions change the sign in both cases. We can choose an orthonormal basis $e_{j}$ in $H^{1}_{D}$, $j = 1,2,\ldots$ (for $\Gamma_{D}\neq\emptyset$) or $j=0,1,2,\ldots$ (for $\Gamma_{D}=\emptyset$) composed of the eigenfunctions of $(\ref{Eq:Laplace eigenvalue problem})$.
	\end{remark}
	\indent Let's remind that the conditions $(\ref{Eq:B assumption})$ are always considered. The sets
	\begin{equation}\label{Eq:hyperbolas}
	C_{j}:=\left\{[d_{1},d_{2}]\in\mathbb{R}^{2}_{+}:\ d_{1} = \frac{1}{\kappa_{j}}\left(\frac{b_{1,2}b_{2,1}}{d_{2}\kappa_{j}-b_{2,2}}+b_{1,1}\right) \right\},\; j=1,2,\ldots
	\end{equation}
	are hyperbolas (or more specifically their parts) in the positive quadrant $\mathbb{R}^{2}_{+}$. Let's note that we present hyperbolas in the different form than usually, namely with respect to $d_{1}$. It is of course equivalent to the standard form derived from the relation
	\begin{equation*}
	(\kappa_{j}d_{1}-b_{1,1})(\kappa_{j}d_{2}-b_{2,2})-b_{1,2}b_{2,1}=0
	\end{equation*}
	(see e.g. \cite{Murray2}). If $\Gamma_{D}=\emptyset$, for $j=0$ the last equality is never satisfied, because $det(B)$ is positive by $(\ref{Eq:B assumption})$. 
	The envelope
	\begin{equation}\label{Eq:envelope}
	C_{E} := \left\{d=[d_{1},d_{2}]\in\mathbb{R}^{2}_{+}:\ d_{1}=\max_{\tilde{d_{1}}\in\mathbb{R}_{+}}\left\{\tilde{d_{1}}: [\tilde{d_{1}},d_{2}]\in \bigcup^{\infty}_{j=1} C_{j}\right\} \right\}
	\end{equation}
	divides the positive quadrant $\mathbb{R}^{2}_{+}$ onto two sets $D_{U}$ and $D_{S}$ (see Figure $\ref{Fig:hyperbolas}$).
	\begin{figure}[h!]
		\centering
		\includegraphics[scale=0.32]{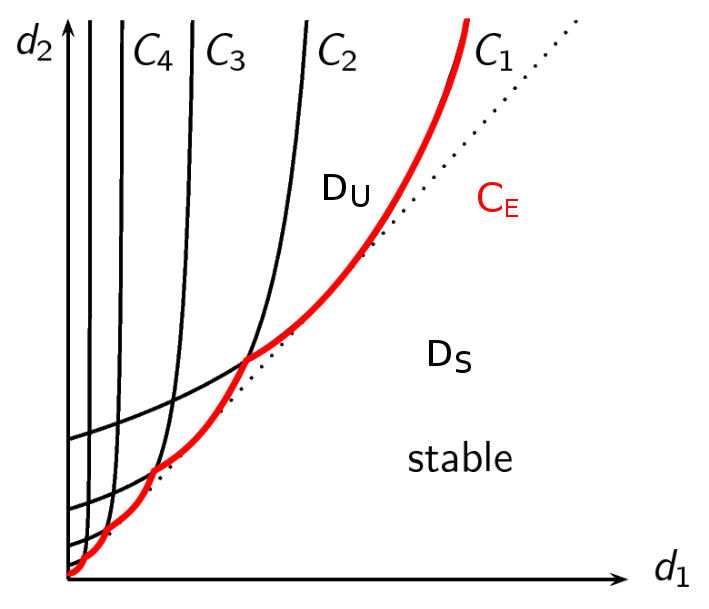}
		\caption{Illustration of the hyperbolas $C_{j}$ and the envelope $C_{E}$. The case when all eigenvalues $\kappa_{j}$ are simple.}\label{Fig:hyperbolas}
	\end{figure}
	\begin{remark}\label{Rem:conciding hyperbolas}
		If all eigenvalues of $(\ref{Eq:Laplace eigenvalue problem})$ are simple, i.e. $\kappa_{j}<\kappa_{j+1}$ for all $j\in\mathbb{N}$, then $C_{j}\neq C_{j+1}$ for all $j>0$. If an eigenvalue $\kappa_{j}$ has a multiplicity $k$, then $\kappa_{j-1}<\kappa_{j}=\ldots=\kappa_{j+k-1}<\kappa_{j+k}$ and $C_{j-1}\neq C_{j}=\ldots =C_{j+k-1}\neq C_{j+k}$. The sets
		\begin{equation*}
		\begin{aligned}
		D_{U} := \{d=[d_{1},d_{2}]\in\mathbb{R}^{2}_{+}: d\text{ is on the left of }C_{E} \},\\
		D_{S} := \{d=[d_{1},d_{2}]\in\mathbb{R}^{2}_{+}: d\text{ is on the right of }C_{E} \}
		\end{aligned}
		\end{equation*}
		are called the domain of instability and the domain of stability. It is known that if $[d_{1},d_{2}]\in D_{S}$, then all eigenvalues $\lambda$ of the problem deciding about stability of the trivial solution of the evolution system corresponding to $(\ref{Eq:stationary problem nonlinear}),(\ref{Eq:zero boundary conditions})$ have negative real parts and if $[d_{1},d_{2}]\in D_{U}$, then there is an eigenvalue $\lambda$ with positive real part (for a particular case see \cite{Mimura},\cite{Nishiura} and for a general case \cite{Eisner-Kucera-spatial}). In particular, the trivial solution of $(\ref{Eq:stationary problem nonlinear}),(\ref{Eq:zero boundary conditions})$ is linearly stable for  $[d_{1},d_{2}]\in D_{S}$ and unstable for  $[d_{1},d_{2}]\in D_{U}$.
	\end{remark}	
	\begin{remark}\label{Rem:crit point <=> lies on C_j}
	The following properties of the curves $C_j$ are known, see e.g. \cite{Nishiura},\cite{Mimura} for a particular case, or \cite{Eisner-Kucera-spatial} for the general case.
		\begin{itemize}
			\item{A point $d=[d_{1},d_{2}]$ is a critical point of $(\ref{Eq:stationary problem}),(\ref{Eq:zero boundary conditions})$ if and only if there exists  $j$ such that $d\in C_{j}$.} In particular, the domain of stability $D_{S}$ does not contain any critical point of $(\ref{Eq:stationary problem}),(\ref{Eq:zero boundary conditions})$ or bifurcation point of $(\ref{Eq:stationary problem nonlinear}),(\ref{Eq:zero boundary conditions})$. Under some additional assumptions, e.g. if the eigenvalue $\kappa_j$ is simple or of odd multiplicity, the points on $C_j$ are simultaneously bifurcation points (see e.g. \cite{Nishiura}).  
			\item{If $d\in C_{n}$ for $n=j, \ldots, j+k-1$ (either $k$ is the multiplicity of the eigenvalue $\kappa_{j}$ or $d$ is in the intersection of two hyperbolas $C_{j}$,$C_{m}$ and $k$ is the sum of multiplicities of $\kappa_{j},\kappa_{m}$, see Remark \ref{Rem:conciding hyperbolas}), then $\text{span}\left(\left[\frac{d_{2}\kappa_{j}-b_{2,2}}{b_{2,1}}e_{j},e_{j}\right]^{j+k-1}_{n=j}\right)$ is the set of the solutions of $(\ref{Eq:stationary problem}),(\ref{Eq:zero boundary conditions})$.}
			\end{itemize} 
	\end{remark}
	
	\section{Main results}\label{Sec:main results}
	Let's recall that the assumptions $(\ref{Eq:nonlinearities constrains}),(\ref{Eq:tilda f constrains})$ are automatically supposed. Besides the notions introduced in Section \ref{Sec:basic ass a def} we will use the following symbols.
	\begin{notation}\label{Def:def compact sets CRr}\mbox{}\\
		Let $r,R,\varepsilon\in\mathbb{R}_{+}$ and $r < R$. We define\newline
		$C^{R}_{r} := \{d=[d_{1},d_{2}]\in C_{E}: d_{2}\in [r,R] \}$,\newline
		$C^{R}_{r}(\varepsilon) := \{d=[d_{1},d_{2}]\in C_{E}\cup D_{U}: d_{2}\in [r,R] \wedge dist(d,C_{E})<\varepsilon \}$.
	\end{notation}
	The following theorem is the main result of this paper.	
	\begin{theorem}\label{Thm:General case}
		\begin{enumerate}[i)]
			\item{The domain of stability $D_{S}$ contains neither critical points of $(\ref{Eq:stationary problem with unilateral terms}),(\ref{Eq:zero boundary conditions})$ nor bifurcation points of $(\ref{Eq:stationary problem nonlinear with unilateral terms}),(\ref{Eq:zero boundary conditions})$.}
			\item{Let $0 < r < R$. Let $C_{j},\ldots,C_{j+k-1}$ be all hyperbolas which have a non-empty intersection with $C^{R}_{r}$. Let any linear combination $e$ of the eigenfunctions of $(\ref{Eq:Laplace eigenvalue problem})$ corresponding to $\kappa_{j},\ldots,\kappa_{j+k-1}$ satisfy
			\begin{equation}\label{Eq:main thm cond}
				s_{-}e^{-}-s_{+}e^{+}\not\equiv 0.
			\end{equation}
			Then there exists $\varepsilon>0$ such that there are neither critical points of $(\ref{Eq:stationary problem with unilateral terms}),(\ref{Eq:zero boundary conditions})$ nor bifurcation points of $(\ref{Eq:stationary problem nonlinear with unilateral terms}),(\ref{Eq:zero boundary conditions})$ in $C^{R}_{r}(\varepsilon)$.}
		\end{enumerate}
	\end{theorem}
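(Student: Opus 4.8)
The plan is to fix $d_2>0$ and reduce the homogenized system \eqref{Eq:stationary problem with unilateral terms} to a single equation for $u$. Because $b_{2,2}<0$, the operator $d_2\Delta+b_{2,2}$ is boundedly invertible and negative definite, so the second equation gives $v=-b_{2,1}(d_2\Delta+b_{2,2})^{-1}u$, a self-adjoint linear function of $u$. Substituting into the first equation and passing to the weak form with the inner product $(\cdot,\cdot)_{H^1_D}$, one obtains an operator equation $d_1u=\mathcal A_{d_2}u$ in $H^1_D$, where $\mathcal A_{d_2}$ is compact, positively homogeneous of degree one, and has the potential
\begin{equation*}
\Phi_{d_2}(u)=\tfrac12\int_\Omega\Big[b_{1,1}u^2-b_{1,2}b_{2,1}\big((d_2\Delta+b_{2,2})^{-1}u\big)u\Big]\,d\Omega-\tfrac12\int_\Omega\big[s_-(u^-)^2+s_+(u^+)^2\big]\,d\Omega ,
\end{equation*}
using that $s_-u^--s_+u^+$ is exactly the gradient of $-\tfrac12\int_\Omega(s_-(u^-)^2+s_+(u^+)^2)\,d\Omega$. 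Since a nontrivial solution of the reduced equation forces $u\neq0$ (if $u=0$ the second equation gives $v=0$), $[d_1,d_2]$ is a critical point of \eqref{Eq:stationary problem with unilateral terms},\eqref{Eq:zero boundary conditions} iff $d_1$ is an eigenvalue of $\mathcal A_{d_2}$; by Euler's identity the largest such $d_1$ equals $\mu(d_2):=\max_{\lVert u\rVert=1}2\Phi_{d_2}(u)$, the variational characterization established in Section \ref{Sec:fixed d2}.

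The same reduction for the classical system \eqref{Eq:stationary problem} produces the linear self-adjoint operator $\mathcal A^0_{d_2}$ with potential $\Phi^0_{d_2}$ given by the first integral above, whose eigenvalues are the hyperbola values $\mu_j(d_2)=\tfrac{1}{\kappa_j}\big(\tfrac{b_{1,2}b_{2,1}}{d_2\kappa_j-b_{2,2}}+b_{1,1}\big)$ with eigenfunctions $e_j$; hence $\mu^0(d_2):=\max_j\mu_j(d_2)$ is precisely the $d_1$-coordinate of the envelope $C_E$ at height $d_2$. As $s_\pm\ge0$, the unilateral part subtracts a nonnegative quantity, so $\Phi_{d_2}(u)\le\Phi^0_{d_2}(u)$ for every $u$, whence
\begin{equation*}
\mu(d_2)=\max_{\lVert u\rVert=1}2\Phi_{d_2}(u)\le\max_{\lVert u\rVert=1}2\Phi^0_{d_2}(u)=\mu^0(d_2).
\end{equation*}
This already gives part i): for $[d_1,d_2]\in D_S$ one has $d_1>\mu^0(d_2)\ge\mu(d_2)$, so $d_1$ exceeds the largest eigenvalue of $\mathcal A_{d_2}$ and cannot be an eigenvalue, so there are no critical points; and since a rescaling argument shows every bifurcation point of \eqref{Eq:stationary problem nonlinear with unilateral terms} is a critical point of the homogenized problem (the terms $n_{1,2}$ and $\tilde f_\pm-s_\pm(\cdot)^\mp$ being of higher order), there are no bifurcation points either.

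For part ii) I would upgrade this to a strict inequality on $[r,R]$. The maximizers of $\Phi^0_{d_2}$ on the unit sphere at level $\tfrac12\mu^0(d_2)$ are exactly the normalized linear combinations $e$ of the eigenfunctions attached to the envelope hyperbolas, i.e.\ of $e_j,\dots,e_{j+k-1}$. Hypothesis \eqref{Eq:main thm cond} gives $s_-e^--s_+e^+\not\equiv0$, and since both summands of $s_-(e^-)^2+s_+(e^+)^2$ are nonnegative this forces $\int_\Omega(s_-(e^-)^2+s_+(e^+)^2)\,d\Omega>0$, so $\Phi_{d_2}(e)<\Phi^0_{d_2}(e)$. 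If $\mu(d_2)=\mu^0(d_2)$ held, a unilateral maximizer $u^\ast$ would satisfy $\Phi^0_{d_2}(u^\ast)\ge\Phi_{d_2}(u^\ast)=\tfrac12\mu^0(d_2)$, forcing $u^\ast$ to be a classical maximizer with vanishing unilateral potential — contradicting \eqref{Eq:main thm cond}. Hence $\mu(d_2)<\mu^0(d_2)$ for every $d_2\in[r,R]$.

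Finally I would make the gap uniform. Both $\mu^0$ and $\mu$ are continuous in $d_2$ ($\mu^0$ explicitly, and $\mu$ because $\mathcal A_{d_2}$ depends continuously on $d_2$ and the top value of a norm-continuously varying compact potential operator is continuous), so on the compact interval the strict pointwise inequality yields $\delta:=\min_{d_2\in[r,R]}(\mu^0(d_2)-\mu(d_2))>0$. Choosing $\varepsilon>0$ small enough that every $[d_1,d_2]\in C^R_r(\varepsilon)$ obeys $d_1>\mu^0(d_2)-\delta\ge\mu(d_2)$ — possible since such points lie within distance $\varepsilon$ of the envelope, which has bounded slope on $[r,R]$ and so controls $\mu^0(d_2)-d_1$ — we get $d_1$ above the largest eigenvalue of $\mathcal A_{d_2}$ throughout the strip, hence no critical points of \eqref{Eq:stationary problem with unilateral terms} and, as before, no bifurcation points of \eqref{Eq:stationary problem nonlinear with unilateral terms}. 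I expect the main obstacle to be precisely this strict step: pinning down the classical maximizers as the finite-dimensional span of $e_j,\dots,e_{j+k-1}$ and ruling out a unilateral maximizer that evades \eqref{Eq:main thm cond}, together with the continuity of the nonlinear top eigenvalue $\mu(d_2)$ needed to pass from the pointwise gap to a uniform one.
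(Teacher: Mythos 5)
Your fixed-$d_2$ core --- reducing the homogenized system to a single equation, the potential/Rayleigh-quotient characterization of the largest eigenvalue, the comparison $\Phi_{d_2}\le\Phi^{0}_{d_2}$, and the strict inequality forced by $(\ref{Eq:main thm cond})$ at the classical maximizers (which are exactly the combinations of $e_{j},\ldots,e_{j+k-1}$) --- is precisely the paper's mechanism (Lemma \ref{Lem:critical points vs eigenvalues}, Theorems \ref{Thm:maximal eigenvalues of S-beta} and \ref{Thm:dMAXbeta < dMAX}), and your homogenization step for bifurcation points matches Lemma \ref{Lem:bifurcation implies critical}. But there is a genuine gap: your reduction ``one obtains an operator equation $d_{1}u=\mathcal{A}_{d_2}u$'' fails in the pure Neumann case $\Gamma_{D}=\emptyset$, which the theorem covers. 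There the space is all of $W^{1,2}$ with the full inner product, the weak form of $d_{1}\Delta u$ contributes $d_{1}((I-A)u,\varphi)$, and the reduced problem is the \emph{generalized} eigenvalue problem $d_{1}(I-A)u-S_{d_{2}}u+\beta(u)=0$ (see $(\ref{Eq:reduced operator equation unilateral Neumann})$, $(\ref{Eq:define S Neumann})$); $I-A$ has the constants as kernel and is not invertible, so $d_{1}$ is not an eigenvalue of any single compact operator and $\max_{\lVert u\rVert=1}2\Phi_{d_2}(u)$ is the wrong quotient. The paper must instead maximize $\bigl((S_{d_2}u,u)-(\beta(u),u)\bigr)/((I-A)u,u)$ over $u\notin Ker(I-A)$ and rule out constants in maximizing sequences via $(S_{d_2}u,u)=-\det(\mathbf{B})(-b_{2,2})^{-1}\lVert u\rVert^{2}<0$ on constants (Remark \ref{Rem:Suu<0}, Theorem \ref{Thm:existence of maximum Neumann}); note also that your continuity-in-$d_2$ claim for the top value becomes delicate there, since the perturbation bound $|(S_{d_2}u,u)-(S_{d_2'}u,u)|\le\lVert S_{d_2}-S_{d_2'}\rVert\,\lVert u\rVert^{2}$ is not controlled by the degenerate denominator $((I-A)u,u)$ near constants. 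A second, smaller omission: you take the maximum $\mu(d_2)$ for granted; attainment needs positivity of the supremum, i.e.\ hypothesis $(\ref{Eq:positive supremum cond})$, and when it fails one argues directly that no positive eigenvalue (hence no critical point with that $d_2$) exists at all, as in Remark \ref{Rem:existence of maxima} and the first step of the proof of Theorem \ref{Thm:dMAXbeta < dMAX}.

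Where your route genuinely differs is the passage from the pointwise strict inequality $\mu(d_2)<\mu^{0}(d_2)$ to the uniform $\varepsilon$: you propose continuity of $d_2\mapsto\mu(d_2)$, a minimum over the compact interval $[r,R]$, and Lipschitz control of the envelope to translate Euclidean distance into a $d_1$-gap. For $\Gamma_{D}\neq\emptyset$ this is viable and buys an explicit, quantitative strip. The paper avoids any continuity statement for the nonlinear top eigenvalue: it assumes critical points $d^{n}\in C^{R}_{r}(\varepsilon_n)$ with $\varepsilon_{n}\to0$, normalizes the solutions $W_{n}/\lVert W_{n}\rVert$, and passes to the limit using compactness of $A$ and the weak-to-strong property $(\ref{Eq:beta property})$ of $\beta$ to produce a nontrivial solution at a limit point $d^{0}\in C^{R}_{r}\subset C_{E}$, contradicting the strict inequality of Theorem \ref{Thm:dMAXbeta < dMAX}. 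That sequential compactness argument works verbatim in the Neumann case, so to close your proof you should either adopt it or supply the Neumann-adapted reduction and a careful continuity proof for the generalized Rayleigh quotient.
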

	We emphasize that if the condition $(\ref{Eq:main thm cond})$ is not satisfied for some linear combination $e$ mentioned, then there are critical points of $(\ref{Eq:stationary problem with unilateral terms}), (\ref{Eq:zero boundary conditions})$ directly on $C^{R}_{r}$ due to Remark \ref{Rem:crit point <=> lies on C_j}. Let's note that if all hyperbolas $C_{j},\ldots,C_{j+k-1}$ do not coincide, i.e. it is not $\kappa_{j}=\kappa_{j+1}=\ldots=\kappa_{j+k-1}$, then the eigenfunctions $e_{j},\ldots,e_{j+k-1}$ do not correspond to the same eigenvalue and their linear combination need not be an eigenfunction. We discuss possible situations in the following two examples:
	\begin{itemize}
		\item{First let's assume that $C^{R}_{r}$ has a non-empty intersection with exactly two non-coinciding hyperbolas $C_{k}$ and $C_{k+1}$. If both $e=e_{k}$ and $e=e_{k+1}$ satisfy $(\ref{Eq:main thm cond})$, then there are no critical points of $(\ref{Eq:stationary problem with unilateral terms}),(\ref{Eq:zero boundary conditions})$ on 
		$C^{R}_{r}\setminus (C_{k}\cap C_{k+1})$. However, it can happen that there is a linear combination $e$ of $e_{k}$, $e_{k+1}$ such that $s_{-}e^{-}-s_{+}e^{+}\equiv 0$, and in this case the intersection point $C_{k}\cap C_{k+1}$ is a critical point of $(\ref{Eq:stationary problem with unilateral terms}), (\ref{Eq:zero boundary conditions})$ (see also Remark \ref{Rem:crit point <=> lies on C_j}).}
		\item{In an other scenario we take $C^{R}_{r}$ which consists of a part of two coinciding hyperbolas $C_{k}=C_{k+1}$, i.e. $\kappa_{k}=\kappa_{k+1}$. In this case the assumption of Theorem \ref{Thm:General case} ii) means that every eigenfunction corresponding to $\kappa_{k}=\kappa_{k+1}$ must satisfy $(\ref{Eq:main thm cond})$. Otherwise the critical points of $(\ref{Eq:stationary problem with unilateral terms}), (\ref{Eq:zero boundary conditions})$ are on the whole $C_{k}$, in particular on $C^{R}_{r}$ (see Remark \ref{Rem:crit point <=> lies on C_j}).}
	\end{itemize}	
	The result is illustrated on Figure $\ref{Fig:analytic result general}$.\\	
	\begin{figure}[H]
		\centering
		\includegraphics[scale=0.32]{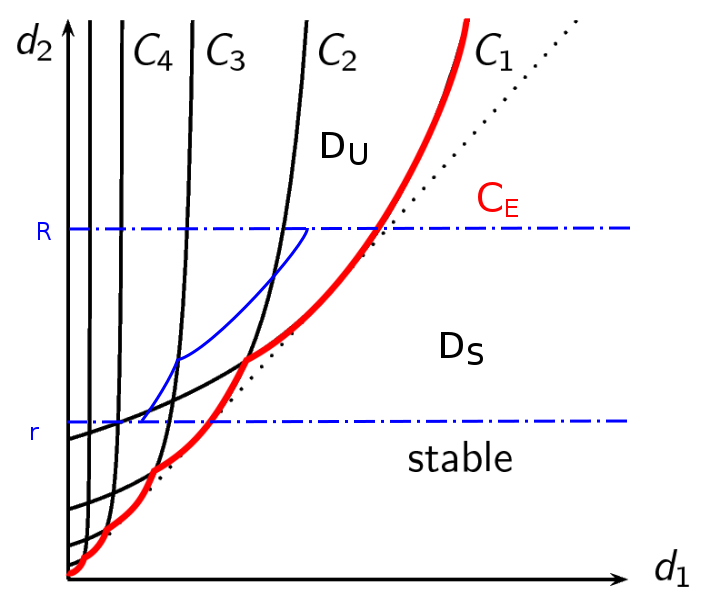}
		\caption{Illustration of the result of Theorem $\ref{Thm:General case}$. The critical points are no longer in the region between $C_E$ (red curve) and blue curve. Assuming the case when all eigenvalues $\kappa_{j}$ are simple, i.e. $C_{j}\neq C_{k}$ for all $k\neq j$, and any linear combination of eigenfunctions $e_{1},e_{2}$ corresponding to $\kappa_{1},\kappa_{2}$ satisfy $(\ref{Eq:main thm cond})$.}\label{Fig:analytic result general}
	\end{figure}
	\begin{corollary}\label{Corol}
		\begin{enumerate}[i)]
			\item{For any compact part $M$ of $D_{S}$ there exists $\delta>0$ such that for any $[d_{1},d_{2}]\in M$ there are no non-trivial solutions of $(\ref{Eq:stationary problem nonlinear with unilateral terms}),(\ref{Eq:zero boundary conditions})$ with $0< \lVert u \rVert_{H^{1}_{D}} + \lVert v \rVert_{H^{1}_{D}} < \delta$.}
			\item{Under the assumption from Theorem \ref{Thm:General case} ii), for any compact part $M$ of $D_{S}\cup C^{R}_{r}(\varepsilon)$ there exists $\delta>0$ such that for any $[d_{1},d_{2}]\in M$ there are no non-trivial solutions of $(\ref{Eq:stationary problem nonlinear with unilateral terms}),(\ref{Eq:zero boundary conditions})$ with $0 < \lVert u \rVert_{H^{1}_{D}} + \lVert v \rVert_{H^{1}_{D}} < \delta$.}
		\end{enumerate}		 
	\end{corollary}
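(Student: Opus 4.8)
The plan is to deduce the Corollary from Theorem \ref{Thm:General case} by a compactness argument that upgrades the pointwise conclusion ``$d$ is not a bifurcation point'' into a single radius $\delta$ valid uniformly on the compact set $M$. The two parts of the Corollary concern exactly the nonlinear problem (\ref{Eq:stationary problem nonlinear with unilateral terms}), (\ref{Eq:zero boundary conditions}) that figures in Definition \ref{Def:bifurcation point}, and Theorem \ref{Thm:General case} asserts precisely the absence of its bifurcation points in $D_{S}$ (part i) and in $C^{R}_{r}(\varepsilon)$ (part ii). Consequently, for part i) every $d\in M\subset D_{S}$ fails to be a bifurcation point, and for part ii) every $d\in M\subset D_{S}\cup C^{R}_{r}(\varepsilon)$ fails to be one, invoking Theorem \ref{Thm:General case} i) on the $D_{S}$-portion of $M$ and Theorem \ref{Thm:General case} ii) on the $C^{R}_{r}(\varepsilon)$-portion.

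First I would unwind Definition \ref{Def:bifurcation point}. That $d$ is \emph{not} a bifurcation point means that some neighbourhood of $[d,0,0]$ in $\mathbb{R}^{2}_{+}\times H^{1}_{D}\times H^{1}_{D}$ contains no point $[d',u,v]$ with $\lVert[u,v]\rVert\neq 0$ solving the nonlinear problem. Shrinking this neighbourhood to a basic product $V_{d}\times B_{d}$, where $V_{d}\subset\mathbb{R}^{2}_{+}$ is open and $B_{d}$ is a ball of some radius $\delta_{d}>0$ about the origin in $H^{1}_{D}\times H^{1}_{D}$, yields a statement uniform in the parameter near $d$: for \emph{every} $d'\in V_{d}$ there is no nontrivial solution $[u,v]$ with $\lVert u\rVert_{H^{1}_{D}}+\lVert v\rVert_{H^{1}_{D}}<\delta_{d}$, after adjusting $\delta_{d}$ to account for the passage between the norm of $B_{d}$ and the sum norm used in the Corollary.

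Next I would invoke compactness. The open sets $\{V_{d}\}_{d\in M}$ cover the compact set $M$, so there is a finite subcover $V_{d^{1}},\dots,V_{d^{m}}$; set $\delta:=\min_{i}\delta_{d^{i}}>0$. Any $[d_{1},d_{2}]\in M$ lies in some $V_{d^{i}}$, and by the previous step any nontrivial $[u,v]$ with $0<\lVert u\rVert_{H^{1}_{D}}+\lVert v\rVert_{H^{1}_{D}}<\delta\le\delta_{d^{i}}$ would place $[d_{1},d_{2},u,v]$ inside the excluding product neighbourhood $V_{d^{i}}\times B_{d^{i}}$, a contradiction. Hence no such solution exists, which is exactly the claimed uniform conclusion; the identical bookkeeping establishes both i) and ii).

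The genuinely delicate step---and essentially the only one, since the analytic substance resides in Theorem \ref{Thm:General case}---is the passage in the second step from the purely pointwise topological hypothesis to a neighbourhood excluding small nontrivial solutions for \emph{all} nearby parameters at once. This hinges on taking the excluding neighbourhood of Definition \ref{Def:bifurcation point} as a product $V_{d}\times B_{d}$, so that its whole parameter slice $V_{d}$ inherits the absence of solutions of norm below $\delta_{d}$; one then only needs that $M$ be compact, which is assumed, to extract a finite subcover and form the uniform radius $\delta$.
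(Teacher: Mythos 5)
Your proposal is correct and is in substance the same argument as the paper's one-line proof: both deduce the Corollary from Theorem \ref{Thm:General case} solely by compactness of $M$, the paper phrasing it as a contradiction (if no uniform $\delta$ existed, one could pick $d^{n}\in M$ and nontrivial solutions $W_{n}$ with $0<\lVert W_{n}\rVert<1/n$; compactness of $M$ gives $d^{n}\to d^{0}\in M$ along a subsequence, making $d^{0}\in M$ a bifurcation point of $(\ref{Eq:stationary problem nonlinear with unilateral terms}),(\ref{Eq:zero boundary conditions})$ and contradicting Theorem \ref{Thm:General case}), while you run the contrapositive directly via an open cover. Your key step --- shrinking the excluding neighbourhood from Definition \ref{Def:bifurcation point} to a product $V_{d}\times B_{d}$, which is legitimate since the ambient space is a product of metric spaces, and then extracting a finite subcover to set $\delta=\min_{i}\delta_{d^{i}}$ --- is exactly the open-cover counterpart of the paper's sequential argument, so the two proofs are equivalent.
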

	\begin{proof}\mbox{}\\
		Indeed, it is easy to see that if this were not true, then a bifurcation point of $(\ref{Eq:stationary problem nonlinear with unilateral terms}),(\ref{Eq:zero boundary conditions})$ would exist in $M$, which would contradict Theorem \ref{Thm:General case}. 
	\end{proof}
	\noindent There are two important particular cases for $\Gamma_{D}\neq\emptyset$ and $\Gamma_{D}=\emptyset$:
	\begin{theorem}\label{Thm:Special case Dirichlet}
		Let $\Gamma_{D}\neq\emptyset$. Let one of the functions $s_{+},s_{-}$ be identically zero and the other positive a.e. on $\Omega$. Let $d^{I}_{2}$ be the second coordinate of the intersection point of $C_{1}$ and $C_{2}$. 
		\begin{enumerate}[i)]
			\item{Any $d\in C_{1}$, in particular any $d\in C^{R}_{r}$ with $d^{I}_{2}\leq r < R$, is a critical point of $(\ref{Eq:stationary problem with unilateral terms})$,$(\ref{Eq:zero boundary conditions})$.}
			\item{If $0< r < R < d^{I}_{2}$, then there exists $\varepsilon>0$ such that there are neither critical points of $(\ref{Eq:stationary problem with unilateral terms})$,$(\ref{Eq:zero boundary conditions})$ nor bifurcation points of $(\ref{Eq:stationary problem nonlinear with unilateral terms}),(\ref{Eq:zero boundary conditions})$ in $C^{R}_{r}(\varepsilon)$.}
		\end{enumerate}		
	\end{theorem}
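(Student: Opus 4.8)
The plan is to derive both parts from Theorem \ref{Thm:General case}, combined with the explicit description of the solution set of the linearised system on the hyperbolas $C_j$ from Remark \ref{Rem:crit point <=> lies on C_j} and the sign information on the Laplace eigenfunctions from Remark \ref{Rem:Laplacian eigenvalues}. The geometric fact I shall use repeatedly is that, by the definition of $d_2^I$ and the shape of the hyperbolas (Figure \ref{Fig:hyperbolas}), the envelope coincides with $C_1$ for $d_2\geq d_2^I$, while for $d_2<d_2^I$ the corresponding part of $C_E$ is built only from hyperbolas $C_j$ with index $j\geq 2$.

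To prove i), I would fix $d\in C_1$ and use the solution $[u,v]=c\,[\frac{d_2\kappa_1-b_{2,2}}{b_{2,1}}e_1,\,e_1]$ of $(\ref{Eq:stationary problem})$ provided by Remark \ref{Rem:crit point <=> lies on C_j}, with $c\neq 0$ to be chosen. Since $\Gamma_D\neq\emptyset$, Remark \ref{Rem:Laplacian eigenvalues} guarantees that $e_1$ does not change sign, so I may take $e_1>0$; moreover $d_2\kappa_1-b_{2,2}>0$ because $b_{2,2}<0$ and $d_2,\kappa_1>0$. Hence $u$ has constant sign on $\Omega$, and since $b_{2,1}\neq 0$ I can prescribe that sign through the sign of $c$: if $s_+\equiv 0$ I choose $u\geq 0$ so that $u^-=0$, and if $s_-\equiv 0$ I choose $u\leq 0$ so that $u^+=0$. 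In either case $s_-u^--s_+u^+\equiv 0$, so the pair $[u,v]$ solves $(\ref{Eq:stationary problem with unilateral terms})$ as well, and $d$ is a critical point. The ``in particular'' claim is then immediate, because $C_r^R\subset C_1$ once $d_2^I\leq r$.

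For ii) the task reduces to checking the hypothesis $(\ref{Eq:main thm cond})$ of Theorem \ref{Thm:General case} ii) and applying that theorem. As $R<d_2^I$, every hyperbola meeting $C_r^R$ has index $j\geq 2$, so any admissible $e$ is a linear combination of eigenfunctions $e_j,\dots,e_{j+k-1}$ with $j\geq 2$. Each of these is $L^2$-orthogonal to $e_1$, hence so is $e$; as $e_1>0$ and $e\not\equiv 0$, the identity $\int_\Omega e_1 e\,d\Omega=0$ forces $e$ to take both signs, i.e. $e^+\not\equiv 0$ and $e^-\not\equiv 0$. Since exactly one of $s_\pm$ is positive a.e. and the other is zero, $s_-e^--s_+e^+$ reduces to $\pm s_\mp e^\mp\not\equiv 0$, so $(\ref{Eq:main thm cond})$ holds; Theorem \ref{Thm:General case} ii) then supplies the required $\varepsilon$.

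The main obstacle is the bookkeeping that splits the two regimes exactly at $d_2^I$, since the argument rests entirely on the fact that the non-sign-changing eigenfunction is precisely $e_1$ associated with $C_1$. One must therefore be certain that for $d_2\geq d_2^I$ the relevant solution is driven by $e_1$ (making the unilateral term removable), whereas for $d_2<d_2^I$ only combinations of higher eigenfunctions enter. The genuinely delicate point lies in ii): it does not suffice that each basis eigenfunction $e_j$ changes sign, one needs \emph{every} linear combination to change sign, and this is exactly what $L^2$-orthogonality to the positive ground state $e_1$ delivers.
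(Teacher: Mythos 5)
Your proposal is correct and follows essentially the same route as the paper: part i) uses the span solution $\left[\frac{d_{2}\kappa_{1}-b_{2,2}}{b_{2,1}}e_{1},e_{1}\right]$ from Remark \ref{Rem:crit point <=> lies on C_j} with the sign of the multiplier chosen so that the unilateral term vanishes, and part ii) rests on the fact that for $d_{2}<d^{I}_{2}$ only hyperbolas $C_{j}$ with $j\geq 2$ meet $C^{R}_{r}$, so every admissible linear combination changes sign by $L^{2}$-orthogonality to the positive first eigenfunction $e_{1}$ (this is exactly the paper's Lemma \ref{Lem:lin comb}, via $(e_{1},e_{k})_{H^{1}_{D}}=\kappa_{1}\int_{\Omega}e_{1}e_{k}\,d\Omega=0$), whence condition (\ref{Eq:main thm cond}) holds. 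The only cosmetic difference is that you invoke Theorem \ref{Thm:General case} ii) directly after verifying its hypothesis, whereas the paper routes through Theorem \ref{Thm:dMAXbeta < dMAX} and repeats the compactness argument from the proof of Theorem \ref{Thm:General case} ii) — the content is the same.
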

	\begin{theorem}\label{Thm:Special case Neumann}
		Let $\Gamma_{D}=\emptyset$. Let one of the functions $s_{+},s_{-}$ be identically zero and the other positive a.e. on $\Omega$. Then for any $0 < r < R$ there exists $\varepsilon>0$ such that there are neither critical points of $(\ref{Eq:stationary problem with unilateral terms})$,$(\ref{Eq:zero boundary conditions})$ nor bifurcation points of $(\ref{Eq:stationary problem nonlinear with unilateral terms}),(\ref{Eq:zero boundary conditions})$ in $C^{R}_{r}(\varepsilon)$.
	\end{theorem}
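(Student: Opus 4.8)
\section*{Proof proposal}

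The plan is to deduce this theorem directly from Theorem \ref{Thm:General case} ii), so the whole task reduces to verifying that the sign condition (\ref{Eq:main thm cond}) holds for every nontrivial linear combination of the eigenfunctions attached to the hyperbolas meeting $C^{R}_{r}$. Since the hypothesis is symmetric in $s_{+}$ and $s_{-}$, I would without loss of generality suppose $s_{+}\equiv 0$ and $s_{-}>0$ a.e. on $\Omega$; then (\ref{Eq:main thm cond}) collapses to $s_{-}e^{-}\not\equiv 0$, which by the a.e.\ positivity of $s_{-}$ is equivalent to $e^{-}\not\equiv 0$, i.e.\ to $e$ being negative on a set of positive measure. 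The complementary case $s_{-}\equiv 0$, $s_{+}>0$ is handled identically with $e^{+}$ in place of $e^{-}$.

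The key observation is that in the pure Neumann setting $\Gamma_{D}=\emptyset$ the only eigenvalue of (\ref{Eq:Laplace eigenvalue problem}) producing no hyperbola is the simple eigenvalue $\kappa_{0}=0$, whose eigenfunction $e_{0}$ is constant; every hyperbola $C_{j}$ entering the envelope has index $j\geq 1$ and thus corresponds to a strictly positive eigenvalue. Fixing $0<r<R$ and letting $C_{j},\ldots,C_{j+k-1}$ be the hyperbolas meeting $C^{R}_{r}$, each index is therefore at least $1$. Integrating $-\Delta e_{n}=\kappa_{n}e_{n}$ over $\Omega$ and using the Neumann condition gives $\kappa_{n}\int_{\Omega}e_{n}\,d\Omega=0$, so $\int_{\Omega}e_{n}\,d\Omega=0$ for each such $n$; consequently every linear combination $e=\sum_{n=j}^{j+k-1}c_{n}e_{n}$ has vanishing mean, $\int_{\Omega}e\,d\Omega=0$.

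From the zero-mean property the sign condition follows at once: a nontrivial $e$ with $\int_{\Omega}e\,d\Omega=0$ can be neither a.e.\ nonnegative nor a.e.\ nonpositive, so both $e^{+}\not\equiv 0$ and $e^{-}\not\equiv 0$. In particular $e^{-}\not\equiv 0$, whence $s_{-}e^{-}\not\equiv 0$ and (\ref{Eq:main thm cond}) holds for every nontrivial admissible $e$. Theorem \ref{Thm:General case} ii) then supplies the required $\varepsilon>0$ for which $C^{R}_{r}(\varepsilon)$ contains no critical points of (\ref{Eq:stationary problem with unilateral terms}),(\ref{Eq:zero boundary conditions}) and no bifurcation points of (\ref{Eq:stationary problem nonlinear with unilateral terms}),(\ref{Eq:zero boundary conditions}).

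I expect no serious obstacle here. Unlike the Dirichlet case of Theorem \ref{Thm:Special case Dirichlet}, where the sign-definite principal eigenfunction $e_{1}$ violates (\ref{Eq:main thm cond}) and forces critical points onto $C_{1}$, the Neumann case removes precisely that obstruction, because the sign-definite eigenfunction is the constant $e_{0}$, to which no hyperbola is attached. The only point requiring a line of care is confirming that every hyperbola relevant to $C^{R}_{r}$ carries index $j\geq 1$ --- that is, that $\kappa_{0}=0$ genuinely contributes no curve --- which is exactly the remark following (\ref{Eq:hyperbolas}) that $\det \mathbf{B}>0$ prevents $j=0$ from being critical.
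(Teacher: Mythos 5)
Your proposal is correct and takes essentially the same route as the paper: the paper's Lemma \ref{Lem:lin comb} establishes exactly your key fact (every nontrivial combination of $e_{n}$, $n\geq 1$, changes sign, proved there via orthogonality to the constant $e_{0}$ in $W^{1,2}$, which is the same zero-mean property you obtain by integrating the eigenvalue equation), and the paper then, like you, feeds condition (\ref{Eq:main thm cond}) into the machinery behind Theorem \ref{Thm:General case} ii), merely re-running that argument through Theorem \ref{Thm:dMAXbeta < dMAX} instead of citing the theorem as a black box. Your direct invocation of Theorem \ref{Thm:General case} ii) is legitimate, since you verify its hypothesis verbatim (for all nontrivial combinations, noting correctly that $\kappa_{0}=0$ contributes no hyperbola because $\det\mathbf{B}>0$).
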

	\begin{remark}	
		The size of $\varepsilon$ in Theorems $\ref{Thm:General case}$-$\ref{Thm:Special case Neumann}$ depends on $r$ and $R$. Actually $\varepsilon\rightarrow 0$ as $R\rightarrow d^{I}_{2}$ or $r\rightarrow 0$ in Theorem $\ref{Thm:Special case Dirichlet}$ and $\varepsilon\rightarrow 0$ as $r\rightarrow 0$ in Theorem $\ref{Thm:Special case Neumann}$. The following theorem states that if the source and sink are in some sense small enough, then there exists at least one critical point $[d_1,d_2] \in D_{U}\cup C_{E}$ with a given $d_2$. A question if sometimes (for a strong source or sink) no critical point with a given $d_2$ exists remains an open problem. Cf. Remark \ref{Rem:existence of maxima} in Section \ref{Sec:fixed d2}.
	\end{remark}
	\begin{theorem}\label{Thm:tau estimate consequence}
		Let $d_{2}>0$ be arbitrary fixed. Let $j_{0}$ be such that $\left[\frac{1}{\kappa_{j_{0}}}\left(\frac{b_{1,2}b_{2,1}}{d_{2}\kappa_{j_{0}}-b_{2,2}}+b_{1,1}\right),d_{2}\right]\in C_{E}$ (see $(\ref{Eq:hyperbolas})$,$(\ref{Eq:envelope})$). If $\max\left\{\lVert s_{-}\rVert_{\infty},\lVert s_{+}\rVert_{\infty}\right\} < b_{1,1} + \frac{b_{1,2}b_{2,1}}{d_{2}\kappa_{j_{0}}-b_{2,2}}$, then there exists at least one $d_{1}$ such that $[d_{1},d_{2}]\in D_{U}\cup C_{E}$ is a critical point of the problem $(\ref{Eq:stationary problem with unilateral terms}),(\ref{Eq:zero boundary conditions})$.
	\end{theorem}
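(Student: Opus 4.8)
The plan is to rely on the variational description of critical points at a fixed $d_2$ that is set up in Section \ref{Sec:fixed d2}. First I would express $v$ from the second equation of (\ref{Eq:stationary problem with unilateral terms}) as a linear and compact function $v=v(u)$ of $u$; this is legitimate because $d_2\kappa_j-b_{2,2}>0$ for every $j$ (as $b_{2,2}<0$), and in the eigenbasis $v$ acts diagonally through the factors $\tfrac{b_{2,1}}{d_2\kappa_j-b_{2,2}}$. Substituting $v(u)$ into the first equation reduces the search for a critical point $[d_1,d_2]$ to the eigenvalue problem $d_1u=\tilde T(u)$ for a positively homogeneous operator $\tilde T$ possessing a potential. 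By the variational characterization of its largest eigenvalue, the largest $d_1=:\tau(d_2)$ for which $[d_1,d_2]$ is a critical point is
\begin{equation*}
\tau(d_2)=\max_{u\neq 0}\frac{\displaystyle b_{1,1}\int_\Omega u^2+b_{1,2}\int_\Omega v(u)\,u-\int_\Omega s_-(u^-)^2-\int_\Omega s_+(u^+)^2}{\displaystyle\int_\Omega|\nabla u|^2}.
\end{equation*}
I would use that this maximum is attained (the numerator is weakly sequentially continuous on $H^1_D$ because of the compact Sobolev embedding and the compactness of $u\mapsto v(u)$), so that any maximizer is an eigenvector and therefore yields a genuine critical point \emph{as soon as} $\tau(d_2)>0$.

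Next I would isolate the two facts that place this critical point in $D_U\cup C_E$. Since $s_\pm\geq 0$, deleting the last two non-positive integrals shows that the quotient above is dominated, for every $u$, by the corresponding quotient of the classical problem without unilateral terms. Expanding an arbitrary $u$ in the $L^2$-orthonormal eigenfunctions $\hat e_j$ of (\ref{Eq:Laplace eigenvalue problem}) turns that classical quotient into a weighted average of the numbers $\tfrac{1}{\kappa_j}\bigl(b_{1,1}+\tfrac{b_{1,2}b_{2,1}}{d_2\kappa_j-b_{2,2}}\bigr)$, i.e.\ of the first coordinates of the hyperbolas $C_j$ at the given height $d_2$. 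Its maximum is therefore exactly the first coordinate $d_1^{C_E}$ of the point of $C_E$ at that $d_2$, attained for $j=j_0$. Hence $\tau(d_2)\leq d_1^{C_E}$, so that once $\tau(d_2)>0$ we automatically have $0<\tau(d_2)\leq d_1^{C_E}$, i.e.\ $[\tau(d_2),d_2]\in D_U\cup C_E$.

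The heart of the proof is thus to show $\tau(d_2)>0$ under the smallness assumption, and for this I would simply insert the single test function $u=\hat e_{j_0}$, normalized by $\int_\Omega\hat e_{j_0}^2=1$ so that $\int_\Omega|\nabla\hat e_{j_0}|^2=\kappa_{j_0}$. Using $v(\hat e_{j_0})=\tfrac{b_{2,1}}{d_2\kappa_{j_0}-b_{2,2}}\hat e_{j_0}$, the classical part of the numerator equals $b_{1,1}+\tfrac{b_{1,2}b_{2,1}}{d_2\kappa_{j_0}-b_{2,2}}$, while the pointwise identity $(\hat e_{j_0}^-)^2+(\hat e_{j_0}^+)^2=\hat e_{j_0}^2$ together with the normalization gives
\begin{equation*}
\int_\Omega s_-(\hat e_{j_0}^-)^2+\int_\Omega s_+(\hat e_{j_0}^+)^2\leq\max\{\lVert s_-\rVert_\infty,\lVert s_+\rVert_\infty\}.
\end{equation*}
Consequently the numerator is at least $b_{1,1}+\tfrac{b_{1,2}b_{2,1}}{d_2\kappa_{j_0}-b_{2,2}}-\max\{\lVert s_-\rVert_\infty,\lVert s_+\rVert_\infty\}$, which is strictly positive precisely by hypothesis. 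As $\kappa_{j_0}>0$, the quotient at $\hat e_{j_0}$ is strictly positive, whence $\tau(d_2)>0$ and the corresponding maximizer provides the desired critical point in $D_U\cup C_E$.

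I expect the only genuine difficulty to be hidden in the facts borrowed from Section \ref{Sec:fixed d2} — the reduction to a positively homogeneous potential operator and, above all, the attainment and positivity of the maximum defining $\tau(d_2)$ (this is exactly where a strong source or sink could destroy positivity, cf.\ the open problem mentioned in the Remark and Remark \ref{Rem:existence of maxima}). Once the variational characterization and the comparison $\tau(d_2)\leq d_1^{C_E}$ are in hand, the remaining step is the one-line test-function estimate above. A minor point I would verify is that the same Rayleigh quotient, with the Dirichlet seminorm $\int_\Omega|\nabla u|^2$ in the denominator, describes $\tau(d_2)$ also in the pure Neumann case $\Gamma_D=\emptyset$, where the reduced equation reads $d_1Lu=\tilde T(u)$ with $L$ representing $-\Delta$ in $(\cdot,\cdot)_{W^{1,2}}$; since $C_0$ is empty one has $j_0\geq 1$, so $\hat e_{j_0}$ is non-constant, $\kappa_{j_0}>0$, and the estimate is unchanged.
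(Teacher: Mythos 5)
Your proposal is correct and follows essentially the same route as the paper: fix $d_2$, reduce to the single positively homogeneous eigenvalue problem, verify the positivity condition $(\ref{Eq:positive supremum cond})$ by testing the Rayleigh quotient with $e_{j_0}$ via the estimate $\int_\Omega s_-(e_{j_0}^-)^2+s_+(e_{j_0}^+)^2\le\max\{\lVert s_-\rVert_\infty,\lVert s_+\rVert_\infty\}\int_\Omega e_{j_0}^2$, and conclude that the resulting maximal eigenvalue $d_1^{MAX,\beta}>0$ gives a critical point with $d_1^{MAX,\beta}\le d_1^{MAX}$, hence in $D_U\cup C_E$. The only cosmetic difference is that you re-derive the comparison $\tau(d_2)\le d_1^{C_E}$ by eigenfunction expansion, where the paper simply invokes Theorem \ref{Thm:dMAXbeta < dMAX} and Observation \ref{Ob:maximal eigenvalue of S}, and you correctly flag that attainment of the maximum (Theorems \ref{Thm:maximal eigenvalues of S-beta}, \ref{Thm:existence of maximum Neumann}) is the borrowed nontrivial ingredient.
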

	The last theorem of this section is a modification of Theorem $\ref{Thm:General case}$ for the case of unilateral terms in boundary conditions, namely for systems $(\ref{Eq:stationary problem})$ and $(\ref{Eq:stationary problem nonlinear})$ with boundary conditions
	\begin{equation}\label{Eq:boundery conditions with unilateral terms}
	\begin{aligned}
	u = v = 0\quad\text{on}\quad \Gamma_{D},\\
	\frac{\partial u}{\partial n} = s_{-}(\mathbf{x})u^{-} - s_{+}(\mathbf{x})u^{+}\quad\text{on}\quad\Gamma_{N},\\
	\frac{\partial v}{\partial n} = 0 \quad\text{on}\quad \Gamma_{N}.
	\end{aligned}
	\end{equation}	
		Let us note that we consider only positively homogeneous boundary conditions because introducing more general boundary terms as $\tilde{f}_{\mp}$ in the case of sources and sinks in the interior of the domain would mean additional technical complications.
	\begin{theorem}\label{Thm:General case source on boundary}
		\begin{enumerate}[i)]
			\item{The domain of stability $D_{S}$ contains neither critical points of $(\ref{Eq:stationary problem}),(\ref{Eq:boundery conditions with unilateral terms})$ nor bifurcation points of $(\ref{Eq:stationary problem nonlinear}),(\ref{Eq:boundery conditions with unilateral terms})$.}
			\item{Let $0 < r < R$. Let $C_{j},\ldots,C_{j+k-1}$ be all hyperbolas which have a non-empty intersection with $C^{R}_{r}$. Let any linear combination $e$ of the eigenfunctions of $(\ref{Eq:Laplace eigenvalue problem})$ corresponding to $\kappa_{j},\ldots,\kappa_{j+k-1}$ satisfy
			\begin{equation}\label{Eq:main thm cond on boundary}
				s_{-}e^{-}-s_{+}e^{+}\not\equiv 0\quad \text{ on }\Gamma_{N}.
			\end{equation}
			Then there exists $\varepsilon>0$ such that there are neither critical points of $(\ref{Eq:stationary problem}),(\ref{Eq:boundery conditions with unilateral terms})$ nor bifurcation points of $(\ref{Eq:stationary problem nonlinear}),(\ref{Eq:boundery conditions with unilateral terms})$ in $C^{R}_{r}(\varepsilon)$.}
		\end{enumerate}
	\end{theorem}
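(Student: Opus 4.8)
The plan is to mirror the proof of Theorem~\ref{Thm:General case}, replacing the interior unilateral term by its boundary counterpart throughout the operator formulation, so that the only real change is that a surface integral over $\Gamma_N$ takes the place of a volume integral over $\Omega$. First I would write the weak formulation of $(\ref{Eq:stationary problem}),(\ref{Eq:boundery conditions with unilateral terms})$: testing the first equation with $\phi\in H^1_D$ and integrating by parts, the Neumann datum on $\Gamma_N$ produces the surface term, so that a weak solution $[u,v]$ satisfies
\begin{equation*}
d_1\int_\Omega \nabla u\,\nabla\phi\,d\Omega = \int_\Omega(b_{1,1}u+b_{1,2}v)\phi\,d\Omega + d_1\int_{\Gamma_N}\bigl(s_-u^- - s_+u^+\bigr)\phi\,dS
\end{equation*}
for all $\phi\in H^1_D$, while $v$ solves the linear second equation weakly. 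This is identical to the interior problem of Theorem~\ref{Thm:General case} except that the unilateral nonlinearity now enters as an integral over $\Gamma_N$ instead of over $\Omega$.

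Next I would introduce the operator $T$ associated with this boundary term, defined by $(T(u),\phi)_{H^1_D}=\int_{\Gamma_N}(s_-u^- - s_+u^+)\phi\,dS$, and check that it retains the structural properties used in Section~\ref{Sec:abstract fomulation} and Section~\ref{Sec:fixed d2}. It is positively homogeneous of degree one (because $u^\pm$ are), it is bounded and—via the compactness of the trace operator $H^1_D(\Omega)\to L^2(\Gamma_N)$ for a Lipschitz boundary—completely continuous, and it admits the potential
\begin{equation*}
\Phi(u) = -\tfrac12\int_{\Gamma_N}\bigl(s_-(u^-)^2 + s_+(u^+)^2\bigr)\,dS ,
\end{equation*}
which is positively homogeneous of degree two and weakly continuous. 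Since $s_\pm\ge 0$ we have $\Phi\le 0$, and because the supports of $e^-$ and $e^+$ are disjoint one has $\Phi(e)<0$ precisely when $s_-e^- - s_+e^+\not\equiv 0$ on $\Gamma_N$. With these properties in hand, the reduction of Section~\ref{Sec:fixed d2}—fixing $d_2$, expressing $v$ from the linear second equation and substituting into the first—goes through verbatim, producing a single equation whose largest eigenvalue equals the largest $d_1$ for which $[d_1,d_2]$ is a critical point.

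The comparison of largest eigenvalues then proceeds exactly as for Theorem~\ref{Thm:General case}. The variational characterization shows that this largest $d_1$ cannot exceed the value obtained for the problem without unilateral terms, which lies on $C_E$, because the potential $\Phi\le0$ only lowers the relevant Rayleigh-type quotient; this already yields part~i), since no critical point can appear strictly to the left of $C_E$, that is, in $D_S$. For part~ii), the hypothesis $(\ref{Eq:main thm cond on boundary})$ forces $\Phi(e)<0$ on the whole finite-dimensional space of eigenfunction combinations attached to $\kappa_j,\dots,\kappa_{j+k-1}$, which—by the same compactness and continuity argument as in Theorem~\ref{Thm:General case}—gives a strict, uniform drop of the largest eigenvalue over the compact arc $C^R_r$, hence an $\varepsilon$-neighbourhood $C^R_r(\varepsilon)$ free of critical points. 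Bifurcation points are excluded because every bifurcation point is simultaneously a critical point.

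I expect the main obstacle to be the verification that the boundary operator $T$ and its potential $\Phi$ enjoy exactly the compactness and weak-continuity properties that the abstract scheme demands. In the interior case these follow from the compact embedding $H^1_D\hookrightarrow L^q(\Omega)$; here they must instead be read off from the compactness of the trace map into $L^2(\Gamma_N)$ (or $L^q(\Gamma_N)$), which is where the Lipschitz regularity of $\partial\Omega$ and the bound $s_\pm\in L_\infty$ are used. A secondary subtlety is that the nondegeneracy now required is that of the \emph{trace} of $e$ on $\Gamma_N$ rather than of $e$ on $\Omega$; once the equivalence $\Phi(e)<0 \Leftrightarrow s_-e^--s_+e^+\not\equiv0$ on $\Gamma_N$ is established, the remaining argument is identical to that of the interior case.
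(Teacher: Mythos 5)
Your overall strategy coincides with the paper's own proof, which is extremely short: the paper defines the boundary operator $\beta_{N}$ by $(\ref{Eq:define beta N})$, records in Remark \ref{Rem:properties of beta N} that it enjoys all the properties of $\beta$ from Lemma \ref{Lem:properties of beta} (your verification via compactness of the trace map $H^{1}_{D}\to L^{2}(\Gamma_{N})$ is exactly what stands behind that remark), and then simply states that one replaces $\beta$, $\tilde{F}$ by $\beta_{N}$ in the operator formulations, repeats Section \ref{Sec:fixed d2}, and repeats the proof of Theorem \ref{Thm:General case} word for word. Your potential $\Phi$, the equivalence $\Phi(e)<0\Leftrightarrow s_{-}e^{-}-s_{+}e^{+}\not\equiv 0$ on $\Gamma_{N}$ (using the disjoint supports of $e^{\mp}$), and the final compactness argument together with the analogue of Lemma \ref{Lem:bifurcation implies critical} all match the intended scheme.

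There is, however, one concrete step in your write-up that does not go through as written. Your weak formulation is the honest one for the boundary condition $(\ref{Eq:boundery conditions with unilateral terms})$, and it carries the factor $d_{1}$ on the surface term: integrating $d_{1}\Delta u$ by parts yields $d_{1}\int_{\Gamma_{N}}(s_{-}u^{-}-s_{+}u^{+})\phi\, dS$. Hence, after eliminating $v$, the reduced equation is not $d_{1}u-S_{d_{2}}u+\beta_{N}(u)=0$ but $d_{1}\left(u-T(u)\right)-S_{d_{2}}u=0$, so the unilateral term multiplies the eigenvalue parameter. Your claim that ``$\Phi\le 0$ only lowers the relevant Rayleigh-type quotient'' is then misplaced: the unilateral quadratic form does not subtract from the numerator as in Theorem \ref{Thm:maximal eigenvalues of S-beta}; it adds to the denominator, since at any eigenvector
\begin{equation*}
d_{1}=\frac{(S_{d_{2}}u,u)}{\lVert u\rVert^{2}_{H^{1}_{D}}-(T(u),u)},\qquad
-(T(u),u)=\int_{\Gamma_{N}}\left(s_{-}(u^{-})^{2}+s_{+}(u^{+})^{2}\right)dS\ \ge\ 0,
\end{equation*}
and Theorem \ref{Thm:Navratil theorem} does not apply verbatim to this form because the operator $L$ appearing in $(I-L)$ there must be linear, while $T$ is only positively homogeneous. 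Two repairs are available. Either adopt the paper's operator formulation, in which $\beta_{N}$ literally replaces $\beta$ with no $d_{1}$ factor --- this amounts to reading the flux condition as $d_{1}\,\partial u/\partial n = s_{-}u^{-}-s_{+}u^{+}$, and then everything is indeed verbatim as you claim. Or keep your formulation and argue directly at a critical point: since the denominator only grows, $d_{1}\le\max (S_{d_{2}}u,u)/\lVert u\rVert^{2}_{H^{1}_{D}}=d^{MAX}_{1}$, and equality would force $(T(u),u)=0$ together with $u$ being an eigenvector of $S_{d_{2}}$ for $d^{MAX}_{1}$, i.e.\ a linear combination $e$ of $e_{j},\ldots,e_{j+k-1}$ with $s_{-}e^{-}\equiv s_{+}e^{+}\equiv 0$ on $\Gamma_{N}$, contradicting $(\ref{Eq:main thm cond on boundary})$; the case $\Gamma_{D}=\emptyset$ is analogous with $((I-A)u,u)$ in the denominator, and the sequential compactness argument producing the uniform $\varepsilon$ over $C^{R}_{r}$ then runs unchanged. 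With either repair your proposal is sound and is essentially the paper's argument.
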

	Analogous consequence as in Corollary $\ref{Corol}$ can be formulated for Theorems \ref{Thm:Special case Dirichlet}, \ref{Thm:Special case Neumann} and \ref{Thm:General case source on boundary}.
	
	\section{Abstract formulation}\label{Sec:abstract fomulation}
	\indent We define the operator $A: H^{1}_{D}\mapsto H^{1}_{D}$ as
	\begin{equation}\label{Eq:define A}
	(Au,\varphi) = \int_{\Omega} u \varphi\ d\Omega\quad \text{ for all } u,\varphi\in H^{1}_{D}(\Omega).
	\end{equation}
	\begin{remark}\label{Rem:eigenvalues of A}
		The operator $A$ defined by $(\ref{Eq:define A})$ is linear, bounded, symmetric and compact due to compact embedding $W^{1,2}\hookrightarrow\hookrightarrow L^{2}$. Simple calculation gives that the eigenvalues of the operator $A$ are $\mu_{j} = \frac{1}{\kappa_{j}}, j=1,2,\ldots$ for $\Gamma_{D}\neq\emptyset$ and $\mu_{j} = \frac{1}{\kappa_{j}+1},j=0,1,2,\ldots$ for $\Gamma_{D}=\emptyset$, and the corresponding eigenvectors of $A$ coincide with the eigenfunctions $e_j$ of $(\ref{Eq:Laplace eigenvalue problem})$. In particular, the maximal eigenvalue of $A$ is always one and therefore $(Au,u)\leq\lVert u \rVert^{2}_{H^{1}_{D}}$, where  the equality holds only for all multiples $u$ of $e_{1}$ or $e_{0}$ if $\Gamma_{D}\neq\emptyset$ or $\Gamma_{D}=\emptyset$, respectively, see also Remark \ref{Rem:Laplacian eigenvalues}. Hence, $((I-A)u,u)>0$ for all $u\notin span\{e_1\}$ in the case $\Gamma_{D}\neq\emptyset$ and for all $u\notin span\{e_0\}$ in the case $\Gamma_{D}=\emptyset$.
	\end{remark}	
	\indent We define two non-linear operators $N_{1},N_{2}: H^{1}_{D}\times H^{1}_{D}\mapsto H^{1}_{D}$ as
	\begin{equation}\label{Eq:definition of nonlinear operators}
	(N_{i}(u,v),\varphi) = \int_{\Omega} n_{i}(u,v)\varphi\ d\Omega\quad \text{ for all } u,v,\varphi\in H^{1}_{D}, \ i=1,2.
	\end{equation}
	These two operators are well-defined and continuous due to the theorem about Nemytskii operators and the assumptions $(\ref{Eq:nonlinearities constrains})$.
	\begin{remark}\label{Rem:nonlin op limit cond}
		It is known that under the assumptions $(\ref{Eq:nonlinearities small at zero})$,$(\ref{Eq:nonlinearities constrains})$ we have
		\begin{equation}\label{Eq:nonlinear operator limit condition}
		\lim\limits_{\lVert u\rVert_{H^{1}_{D}} + \lVert v\rVert_{H^{1}_{D}}\rightarrow 0} \frac{N_{i}(u,v)}{\lVert u\rVert_{H^{1}_{D}} + \lVert v\rVert_{H^{1}_{D}}} = 0,\quad i=1,2.
		\end{equation}
		For details see e.g. Appendix A.1 of \cite{Kucera-u}.
	\end{remark}
	Furthermore we define operators $\beta^{-},\beta^{+}: H^{1}_{D}\mapsto H^{1}_{D}$ by
	\begin{equation}\label{Eq:define betas}
	\begin{aligned}
	(\beta^{\mp}(u),\varphi) &= \mp\int_{\Omega} s_{\mp}u^{\mp} \varphi\ d\Omega \quad\text{ for all } u,\varphi\in H^{1}_{D}
	\end{aligned}	
	\end{equation}
	and $\beta: H^{1}_{D}\mapsto H^{1}_{D}$ as
	\begin{equation}\label{Eq:define beta}
	\beta:= \beta^{+}+\beta^{-}.	
	\end{equation}
	Due to the theorem about Nemytskii operators and $(\ref{Eq:tilda f constrains})$ we can also define operators $\tilde{F}_{-},\tilde{F}_{+}: H^{1}_{D}\mapsto H^{1}_{D}$ by
	\begin{equation}\label{Eq:define F operators}
	\begin{aligned}
	(\tilde{F}_{\mp}(u),\varphi) &= \mp\int_{\Omega} \tilde{f}_{\mp}(\mathbf{x},u^{\mp}) \varphi\ d\Omega \quad\text{ for all } u,\varphi\in H^{1}_{D}
	\end{aligned}	
	\end{equation}		
	and $\tilde{F}: H^{1}_{D}\mapsto H^{1}_{D}$ as
	\begin{equation}\label{Eq:define F}
	\tilde{F}:= \tilde{F}_{+}+\tilde{F}_{-}.	
	\end{equation}
	\begin{lemma}\label{Lem:properties of beta}
		The operator $\beta$ is positively homogeneous (i.e. $\beta(t u) = t \beta(u)$ for all $t>0, u\in H^{1}_{D}$) and 
		\begin{align}	
		&\text{i)}\quad \exists c\in\mathbb{R}: \lVert \beta(u) \rVert_{H^{1}_{D}} \leq c \lVert s_{-} \rVert_{\infty}\lVert u^{-} \rVert_{H^{1}_{D}} + c \lVert s_{+} \rVert_{\infty}\lVert u^{+} \rVert_{H^{1}_{D}}\quad \forall u\in H^{1}_{D}\label{Eq:beta bounded},\\
		&\text{ii)}\quad u_{n}\rightharpoonup u \implies \beta(u_{n})\rightarrow\beta(u)\label{Eq:beta property},\\
		&\text{iii)}\quad (\beta(u),u) \geq 0\quad \forall u\in H^{1}_{D},\label{Eq:beta inner prod nonnegativity}\\
		&\text{iv)}\quad u_{n}\rightarrow 0, \frac{u_{n}}{\lVert u_{n}\rVert_{H^{1}_{D}}}\rightharpoonup w \implies \frac{\tilde{F}(u_{n})}{\lVert u_{n}\rVert_{H^{1}_{D}}}\rightarrow \beta(w)\label{Eq:beta F relation}.
		\end{align}
	\end{lemma}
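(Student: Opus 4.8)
The plan is to verify the four properties in the order listed, reserving the real effort for (iv). Positive homogeneity is immediate: for $t>0$ one has $(tu)^\mp = t\,u^\mp$ pointwise, so $(\beta^\mp(tu),\varphi) = \mp\int_\Omega s_\mp (tu)^\mp\varphi\,d\Omega = t\,(\beta^\mp(u),\varphi)$ for every $\varphi\in H^{1}_{D}$, whence $\beta(tu)=t\beta(u)$. For (i) I would estimate $\lVert\beta^\mp(u)\rVert = \sup_{\lVert\varphi\rVert\le 1}|(\beta^\mp(u),\varphi)|$ by H\"older's inequality, $|\int_\Omega s_\mp u^\mp\varphi\,d\Omega|\le \lVert s_\mp\rVert_\infty\,\lVert u^\mp\rVert_{L^2}\,\lVert\varphi\rVert_{L^2}$, and then absorb the $L^2$-norms via the continuous embedding $H^{1}_{D}\hookrightarrow L^2$ (note $u^\mp\in H^{1}_{D}$, being the positive/negative part of a $W^{1,2}$ function vanishing on $\Gamma_D$). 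Summing the two estimates and using $\beta=\beta^++\beta^-$ gives (i) with a constant $c$ coming from the embedding.

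Property (ii) is the complete continuity of $\beta$. If $u_{n}\rightharpoonup u$ in $H^{1}_{D}$, the compact embedding $H^{1}_{D}\hookrightarrow\hookrightarrow L^2$ yields $u_n\to u$ strongly in $L^2$; since $|a^\mp-b^\mp|\le|a-b|$ the truncations converge as well, $\lVert u_n^\mp-u^\mp\rVert_{L^2}\le\lVert u_n-u\rVert_{L^2}\to 0$. Then, exactly as in (i), $\lVert\beta^\mp(u_n)-\beta^\mp(u)\rVert = \sup_{\lVert\varphi\rVert\le 1}|\int_\Omega s_\mp(u_n^\mp-u^\mp)\varphi\,d\Omega| \le c\,\lVert s_\mp\rVert_\infty\,\lVert u_n^\mp-u^\mp\rVert_{L^2}\to 0$, so $\beta(u_n)\to\beta(u)$ strongly. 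For (iii) I would compute the pairing directly: using $u^+u=(u^+)^2$ and $u^-u=-(u^-)^2$ pointwise, $(\beta^+(u),u)=\int_\Omega s_+(u^+)^2\,d\Omega$ and $(\beta^-(u),u)=-\int_\Omega s_-u^-u\,d\Omega=\int_\Omega s_-(u^-)^2\,d\Omega$, so that $(\beta(u),u)=\int_\Omega s_+(u^+)^2\,d\Omega+\int_\Omega s_-(u^-)^2\,d\Omega\ge 0$, since $s_\pm\ge 0$ a.e. by (\ref{Eq:tilda terms cond}).

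The main work is (iv), and this is where I expect the genuine obstacle. Writing $w_n:=u_n/\lVert u_n\rVert_{H^{1}_{D}}$ (so $\lVert w_n\rVert=1$ and $w_n\rightharpoonup w$) and invoking positive homogeneity, I would split $\tilde F(u_n)/\lVert u_n\rVert = \beta(w_n) + (\tilde F(u_n)-\beta(u_n))/\lVert u_n\rVert$. The first summand converges to $\beta(w)$ by the already-established complete continuity (ii), since $w_n\rightharpoonup w$. Everything then reduces to showing that the remainder $(\tilde F(u_n)-\beta(u_n))/\lVert u_n\rVert\to 0$. The key point is that the integrand of $\tilde F_\mp-\beta^\mp$ is $h_\mp(\mathbf{x},\xi):=\tilde f_\mp(\mathbf{x},\xi)-s_\mp(\mathbf{x})\xi$, which by the definition of $s_\mp$ in (\ref{Eq:tilda terms cond}) is a higher-order term at $\xi=0$, i.e.\ $h_\mp(\mathbf{x},\xi)/\xi\to 0$ as $\xi\to 0$ while still obeying the growth bound (\ref{Eq:tilda f constrains}).

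The induced Nemytskii operator is therefore of higher order at the origin, exactly as the operators $N_i$ in Remark \ref{Rem:nonlin op limit cond}; testing against $\varphi$ with the $L^{q}$--$L^{q'}$ duality and the embedding $H^{1}_{D}\hookrightarrow L^{q}$, one obtains $\lVert\tilde F(u_n)-\beta(u_n)\rVert/\lVert u_n\rVert\to 0$ as $\lVert u_n\rVert\to 0$. Combining the two pieces yields $\tilde F(u_n)/\lVert u_n\rVert\to\beta(w)$. The \emph{delicate} step is precisely this last estimate: one must pass the little-$o$ behaviour of $h_\mp$ through the rescaling by $\lVert u_n\rVert$ \emph{uniformly} in the test function, which is the content of the higher-order Nemytskii lemma (cf.\ Appendix A.1 of \cite{Kucera-u}). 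Note that the weak convergence $w_n\rightharpoonup w$ plays no role in the remainder and is used, via (ii), only to handle the linear part $\beta(w_n)\to\beta(w)$.
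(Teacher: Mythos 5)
Your proposal is correct and follows essentially the same route as the paper: parts (i)--(iii) are verbatim the paper's arguments (H\"older plus the embedding $H^{1}_{D}\hookrightarrow L^{2}$, the pointwise bound $|u_n^{\mp}-u^{\mp}|\le|u_n-u|$ with compactness, and the sign computation), and for (iv) the paper likewise splits off $\beta(u_n)/\lVert u_n\rVert_{H^{1}_{D}}=\beta(w_n)\to\beta(w)$ via positive homogeneity and (ii), reducing everything to the higher-order Nemytskii estimate for $\tilde f_{\mp}(\mathbf{x},\xi)-s_{\mp}(\mathbf{x})\xi$ (the paper cites Proposition 3.2 of \cite{Eisner-Kucera-Vath} where you cite the analogous statement from \cite{Kucera-u}). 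The only detail the paper makes explicit that you leave implicit is the composition with $u\mapsto u^{\mp}$: it introduces the auxiliary operator $F$ with $(F(u),\varphi)=-\int_{\Omega}(\tilde f_{-}(\mathbf{x},u)-s_{-}u)\varphi\,d\Omega$ and uses $u_n\to 0\Rightarrow u_n^{-}\to 0$ together with $\lVert u_n^{-}\rVert_{H^{1}_{D}}\le\lVert u_n\rVert_{H^{1}_{D}}$ to pass from $\lVert F(u_n^{-})\rVert_{H^{1}_{D}}/\lVert u_n\rVert_{H^{1}_{D}}$ to $\lVert F(u_n^{-})\rVert_{H^{1}_{D}}/\lVert u_n^{-}\rVert_{H^{1}_{D}}\to 0$, a one-line point worth adding to your remainder estimate.
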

	\begin{proof}\mbox{}\\
		\indent The positive homogeneity is apparent.\\
		\begin{enumerate}[i)]
			\item{Using the continuous embedding $H^{1}_{D}\hookrightarrow L^{2}$ and H\"{o}lder's inequality we get
				\begin{equation*}
				\begin{aligned}		
				\lVert \beta(u)\rVert &= \sup_{\lVert\varphi\rVert_{H^{1}_{D}}\leq 1} |(\beta(u),\varphi)| = \sup_{\lVert\varphi\rVert_{H^{1}_{D}}\leq 1} \left| \int_{\Omega} s_{+}u^{+}\varphi\, d\Omega -\int_{\Omega} s_{-}u^{-}\varphi\, d\Omega \right| \leq\\		
				&\leq \lVert s_{+}\rVert_{\infty} \sup_{\lVert\varphi\rVert_{H^{1}_{D}}\leq 1} \left\{\lVert u^{+}\rVert_{L^{2}}\cdot \lVert \varphi\rVert_{L^{2}}\right\} + \lVert s_{-}\rVert_{\infty} \sup_{\lVert\varphi\rVert_{H^{1}_{D}}\leq 1} \left\{\lVert u^{-}\rVert_{L^{2}}\cdot \lVert \varphi\rVert_{L^{2}}\right\} \leq\\
				& \leq c \lVert s_{+}\rVert_{\infty} \sup_{\lVert\varphi\rVert_{H^{1}_{D}}\leq 1} \left\{\lVert u^{+}\rVert_{H^{1}_{D}}\cdot \lVert \varphi\rVert_{H^{1}_{D}}\right\} + c \lVert s_{-}\rVert_{\infty} \sup_{\lVert\varphi\rVert_{H^{1}_{D}}\leq 1} \left\{\lVert u^{-}\rVert_{H^{1}_{D}}\cdot \lVert \varphi\rVert_{H^{1}_{D}}\right\}\leq\nonumber\\
				&\leq c \lVert s_{+}\rVert_{\infty} \lVert u^{+}\rVert_{H^{1}_{D}} + c \lVert s_{-}\rVert_{\infty} \lVert u^{-}\rVert_{H^{1}_{D}}.
				\end{aligned}
				\end{equation*}}
			\item{Let's have a sequence $(u_{n})\subset H^{1}_{D}$ such that $u_{n}\rightharpoonup u\in H^{1}_{D}$. Then by the compact embedding $W^{1,2}\hookrightarrow\hookrightarrow L^{2}$, we get $u_{n}\rightarrow u$ in $L^{2}$. It is easy to see that $|u^{-}_{n}-u^{-}|\leq |u_{n}-u|\ \text{ holds almost everywhere on }\Omega$. Hence,
				\begin{equation*}
				\begin{aligned}
				\lVert \beta^{-}(u_{n})-\beta^{-}(u)\rVert_{H^{1}_{D}} &= \sup_{\lVert \varphi\rVert_{H^{1}_{D}}\leq 1} |(\beta^{-}(u_{n})-\beta^{-}(u),\varphi)| \leq \sup_{\lVert \varphi\rVert_{H^{1}_{D}}\leq 1} \int_{\Omega} |u^{-}_{n}-u^{-}|\cdot |\varphi|\ d\Omega \leq\\
				&\leq C\lVert u_{n}-u\rVert_{L^{2}} \rightarrow 0.
				\end{aligned}
				\end{equation*}
				The same can be shown for $\beta^{+}$ and the assertion follows.}
			\item{Let $u\in H^{1}_{D}$ be arbitrary and $\Omega_{+},\Omega_{-}$ subsets of the domain $\Omega$ such that $\Omega = \Omega_{+}\cup\Omega_{-}$, $u\geq 0$ a.e. on $\Omega_{+}$ and $u<0$ a.e. on $\Omega_{-}$. Hence
				\begin{equation*}
				(\beta(u),u) = \int_{\Omega} s_{+} u^{+}u\; d\Omega - \int_{\Omega} s_{-} u^{-}u\; d\Omega = \int_{\Omega_{+}} s_{+}u^{2} d\Omega_{+} +\int_{\Omega_{-}} s_{-} u^{2} d\Omega_{-}
				\end{equation*}
				and our assertion follows.}
			\item{Now we will define a new auxiliary operator $F: H^{1}_{D}\mapsto H^{1}_{D}$ by
				\begin{equation*}
				(F(u),\varphi) = -\int_{\Omega} (\tilde{f}_{-}(\mathbf{x},u) - s_{-}u)\varphi\ d\Omega\quad \text{ for all }u,\varphi\in H^{1}_{D}.
				\end{equation*}
				We have
				\begin{equation*}
				\lim\limits_{\xi\rightarrow 0} \frac{\tilde{f}_{-}(\mathbf{x},\xi) - s_{-}\xi}{\xi} = 0 \quad\text{ for a.a. }\mathbf{x}\in\Omega
				\end{equation*}
				by assumption $(\ref{Eq:tilda terms cond})$. The growth conditions $(\ref{Eq:tilda f constrains})$ and Proposition 3.2 of \cite{Eisner-Kucera-Vath} give
				\begin{equation}\label{Eq:auxiliary G operator limit}
				\lim\limits_{u\rightarrow 0}\frac{F(u)}{\lVert u\rVert_{H^{1}_{D}}} = 0.
				\end{equation}
				If $u_{n}\rightarrow 0$, then $u^{-}_{n}\rightarrow 0$ (see \cite{Ziemer}) and using $(\ref{Eq:auxiliary G operator limit})$ we get
				\begin{equation*}
				\lim\limits_{n\rightarrow +\infty} \frac{\lVert \tilde{F}_{-}(u_{n}) - \beta^{-}(u_{n}) \rVert_{H^{1}_{D}}}{\lVert u_{n} \rVert_{H^{1}_{D}}} = \lim\limits_{n\rightarrow +\infty} \frac{\lVert F(u^{-}_{n}) \rVert_{H^{1}_{D}}}{\lVert u_{n} \rVert_{H^{1}_{D}}} \leq \lim\limits_{n\rightarrow +\infty} \frac{\lVert F(u^{-}_{n}) \rVert_{H^{1}_{D}}}{\lVert u^{-}_{n} \rVert_{H^{1}_{D}}} = 0.
				\end{equation*}
				If $u_{n}\rightarrow 0$, $\frac{u_{n}}{\lVert u_{n}\rVert_{H^{1}_{D}}}\rightharpoonup w$ then
				\begin{equation*}
				\frac{\tilde{F}_{-}(u_{n})}{\lVert u_{n} \rVert_{H^{1}_{D}}}\rightarrow \beta^{-}(w)
				\end{equation*}
				due to positive homogeneity of $\beta^{-}$ and $(\ref{Eq:beta property})$.\newline
				The same can be shown for $\tilde{F}_{+}$ and $\beta^{+}$ and the assertion is proved.}
		\end{enumerate}	
	\end{proof}
	In order to give an operator formulation of the problem (\ref{Eq:stationary problem}) or (\ref{Eq:stationary problem nonlinear}) with unilateral sources and sinks on the boundary $(\ref{Eq:boundery conditions with unilateral terms})$, we define operators $\beta^{\pm}_{N}: H^{1}_{D}\mapsto H^{1}_{D}$ as
	\begin{equation}\label{Eq:define beta N}
	(\beta^{\mp}_{N}(u),\varphi) = \mp\int_{\Gamma_{N}} s_{\mp}u^{\mp} \varphi\ d\Gamma_{N} \quad\text{ for all } u,\varphi\in H^{1}_{D}
	\end{equation}
	and $\beta_{N}:H^{1}_{D}\mapsto H^{1}_{D}$ as
	\begin{equation}
	\beta_{N}=\beta^{+}_{N}+\beta^{-}_{N}.
	\end{equation}
	\begin{remark}\label{Rem:properties of beta N}
		The operator $\beta_{N}$ possess the same properties as the operator $\beta$ (see Lemma $\ref{Lem:properties of beta}$). 
	\end{remark}
	Let's emphasize that for cases $\Gamma_{D}=\emptyset$ and $\Gamma_{D}\neq\emptyset$ we have two different inner products and therefore operators defined above are in these two cases also different. In the case $\Gamma_{D}\neq\emptyset$ we consider the function space $H^{1}_{D}$ equipped with the inner product $(u,\varphi) =  \int_{\Omega} \nabla u \nabla \varphi \ d\Omega$. A weak solution of the problem $(\ref{Eq:stationary problem with unilateral terms})$,$(\ref{Eq:zero boundary conditions})$ or $(\ref{Eq:stationary problem nonlinear with unilateral terms})$,$(\ref{Eq:zero boundary conditions})$ is then a pair of functions $u,v\in H^{1}_{D}$ satisfying
	\begin{equation}\label{Eq:operator formulation unilateral}
	\begin{aligned}
	d_{1}u -b_{1,1}Au - b_{1,2}Av + \beta(u) &= 0,\\
	d_{2}v -b_{2,1}Au - b_{2,2}Av &= 0
	\end{aligned}
	\end{equation}
	or
	\begin{equation}\label{Eq:operator formulation unilateral nonlinear}
	\begin{aligned}
	d_{1}u -b_{1,1}Au - b_{1,2}Av - N_{1}(u,v) + \tilde{F}(u) &= 0,\\
	d_{2}v -b_{2,1}Au - b_{2,2}Av - N_{2}(u,v) &= 0,	
	\end{aligned}
	\end{equation}
	respectively.\\
	\indent If $\Gamma_{D} = \emptyset$, the function space $H^{1}_{D}$ is identical with $W^{1,2}$ and is equipped with the inner product $(u,\varphi) = \int_{\Omega} \left(\nabla u \nabla \varphi + u \varphi\right) \ d\Omega$. A weak solution of $(\ref{Eq:stationary problem with unilateral terms})$,$(\ref{Eq:zero boundary conditions})$ or $(\ref{Eq:stationary problem nonlinear with unilateral terms})$,$(\ref{Eq:zero boundary conditions})$ is then a pair of functions $u,v\in W^{1,2}$ satisfying
	\begin{equation}\label{Eq:operator formulation unilateral Neumann}
	\begin{aligned}
	d_{1}(I-A)u -b_{1,1}Au -b_{1,2}Av +\beta(u) &=& 0,\\
	d_{2}(I-A)v -b_{2,1}Au -b_{2,2}Av &=& 0
	\end{aligned}
	\end{equation}
	or
	\begin{equation}\label{Eq:operator formulation unilateral noonlinear Neumann}
	\begin{aligned}
	d_{1}(I-A)u -b_{1,1}Au -b_{1,2}Av -N_{1}(u,v) +\tilde{F}(u) &=& 0,\\
	d_{2}(I-A)v -b_{2,1}Au -b_{2,2}Av -N_{2}(u,v) &=& 0,
	\end{aligned}
	\end{equation}
	respectively.\\
	\indent For the problem $(\ref{Eq:stationary problem}),(\ref{Eq:boundery conditions with unilateral terms})$ or $(\ref{Eq:stationary problem nonlinear}),(\ref{Eq:boundery conditions with unilateral terms})$ we will get analogous systems, we just replace operators $\beta$ and $\tilde{F}$ with $\beta_{N}$.

	\section{Critical points for fixed $d_{2}$}\label{Sec:fixed d2}
	In this Section we will assume that $d_{2}>0$ is fixed and we will use the notation from Sections $\ref{Sec:basic ass a def}$ and $\ref{Sec:abstract fomulation}$. As usually, by an eigenvalue of a positively homogeneous operator $P$ we mean a number $\lambda$ such that the equation $P(u)=\lambda u$ has a non-trivial solution. More generally, by an eigenvalue of a problem with a positively homogeneous operator we mean a parameter for which the problem under consideration has a non-trivial solution.
	
	\subsection{Reduction to one operator equation for the case $\Gamma_{D}\neq\emptyset$}\label{Sec:Problem with Dirichlet and Neumann boundary conditions}
	Let's suppose $\Gamma_{D}\neq\emptyset$. Since the operator $A$ is positive by Remark \ref{Rem:eigenvalues of A} and $b_{2,2}<0$ by the assumption ($\ref{Eq:B assumption}$), the number $\frac{d_{2}}{b_{2,2}}$ is not its eigenvalue. Therefore the operator $d_{2}I-b_{2,2}A$ is invertible and surjective. Hence, we can express $v$ from the second equation in $(\ref{Eq:operator formulation unilateral})$, substitute it into the first one and get
	\begin{equation*}
	d_{1}u - b_{1,1}Au  -b_{1,2}A(d_{2}I - b_{2,2}A)^{-1}b_{2,1}Au + \beta(u) = 0.
	\end{equation*}
	Introducing the operator $S_{d_2}: H^{1}_{D}\mapsto H^{1}_{D}$ as
	\begin{equation}\label{Eq:define S}
	S_{d_2}:= b_{1,1}A + b_{1,2}A(d_{2}I - b_{2,2}A)^{-1}b_{2,1}A,
	\end{equation}
	we can write the system $(\ref{Eq:operator formulation unilateral})$ as
	\begin{subequations}\label{Eq:system with extracted v and S unilateral}
	\begin{eqnarray}
	d_{1}u -S_{d_{2}}u + \beta(u) = 0,\label{Eq:reduced operator equation unilateral}\\
	v = (d_{2}I - b_{2,2}A)^{-1}b_{2,1}Au.
	\end{eqnarray}	
	\end{subequations}
	In particular, the system of the operator equations
	\begin{subequations}\label{Eq:system with extracted v and S}
	\begin{eqnarray}
		d_{1}u -S_{d_{2}}u = 0,\label{Eq:reduced operator equation}\\
		v = (d_{2}I - b_{2,2}A)^{-1}b_{2,1}Au
	\end{eqnarray}
	\end{subequations}
	is equivalent with the system
	\begin{equation}\label{Eq:operator system}
		\begin{aligned}
		d_{1}u -b_{1,1}Au - b_{1,2}Av &= 0,\\
		d_{2}v -b_{2,1}Au - b_{2,2}Av &= 0.	
		\end{aligned}
	\end{equation}
	\begin{remark}\label{Rem:sharing eigen of S}
		The operator $S_{d_{2}}: H^{1}_{D}\mapsto H^{1}_{D}$ defined by $(\ref{Eq:define S})$ is linear, bounded, symmetric and compact. It follows from simple calculations and Remark \ref{Rem:eigenvalues of A} that the eigenvalues of the operator $S_{d_{2}}$ are
		\begin{equation}\label{Eq:eigenvalues of S Dirichlet}
		d^{j}_{1}=\frac{1}{\kappa_{j}}\left(\frac{b_{1,2}b_{2,1}}{d_{2}\kappa_{j}-b_{2,2}} + b_{1,1}\right),\; j=1,2,\ldots
		\end{equation}
		and since $\kappa_{j}\rightarrow\infty$ as $j\rightarrow\infty$, we get $d^{j}_{1}\rightarrow 0$ as $j\rightarrow\infty$. The eigenvectors of $S_{d_{2}}$ corresponding to $d^{j}_{1}$ coincide with those of the operator $A$ corresponding to $\mu_{j}$, i.e. with the eigenfunctions of $(\ref{Eq:Laplace eigenvalue problem})$ corresponding to $\kappa_{j}$.
	\end{remark}		
	
	\subsection{Reduction to one operator equation for the case $\Gamma_{D}=\emptyset$}\label{Sec:Problem with pure Neumann boundary conditions}
	\indent Let's consider the case $\Gamma_{D}=\emptyset$. It follows from Remark 	\ref{Rem:eigenvalues of A} that the number $d_{2}$ is not an eigenvalue of the operator $d_{2}A + b_{2,2}A$. Indeed, we have $d_{2}\neq \frac{d_{2}+b_{2,2}}{\kappa_{j}+1}$, because $d_{2}\kappa_{j}\neq b_{2,2}$ ($b_{2,2}$ is negative by $(\ref{Eq:B assumption})$). Hence, the operator $d_{2}I -d_{2}A - b_{2,2}A$ (in $(\ref{Eq:operator formulation unilateral Neumann})$) is surjective and invertible. Similarly as in Section $\ref{Sec:Problem with Dirichlet and Neumann boundary conditions}$ we can transform the system $(\ref{Eq:operator formulation unilateral Neumann})$ to the system
	\begin{subequations}\label{Eq:system with extracted v and S unilateral Neumann}
		\begin{eqnarray}
		d_{1}(I-A)u -S_{d_{2}}u + \beta(u) = 0,\label{Eq:reduced operator equation unilateral Neumann}\\
		v = (d_{2}I -d_{2}A - b_{2,2}A)^{-1}b_{2,1}Au,
		\end{eqnarray}
	\end{subequations}
	with the new operator
	\begin{equation}\label{Eq:define S Neumann}
	S_{d_{2}}:= b_{1,1}A + b_{1,2}A(d_{2}I -d_{2}A - b_{2,2}A)^{-1}b_{2,1}A.
	\end{equation}
	In particular, the system of the operator equations
	\begin{subequations}\label{Eq:system with extracted v and S Neumann}
		\begin{eqnarray}
		d_{1}(I-A)u -S_{d_{2}}u = 0,\label{Eq:reduced operator equation Neumann}\\
		v = (d_{2}I -d_{2}A - b_{2,2}A)^{-1}b_{2,1}Au
		\end{eqnarray}
	\end{subequations}
	is equivalent with the system
	\begin{equation}\label{Eq:operator system Neumann}
	\begin{aligned}
	d_{1}(I-A)u -b_{1,1}Au - b_{1,2}Av &= 0,\\
	d_{2}(I-A)v -b_{2,1}Au - b_{2,2}Av &= 0.	
	\end{aligned}
	\end{equation}
	\begin{remark}\label{Rem:eigenvalues of S}
		The operator $S_{d_{2}}$ defined by $(\ref{Eq:define S Neumann})$ is linear, continuous, symmetric and compact. Simple calculations and Remark \ref{Rem:eigenvalues of A} imply that the eigenvalues of the operator $S_{d_{2}}$ are 
		\begin{equation}\label{Eq:eigenvalues of S Neumann}
		\lambda^{j} = \frac{1}{\kappa_{j}+1}\left(\frac{b_{1,2}b_{2,1}}{d_{2}\kappa_{j}-b_{2,2}} + b_{1,1}\right),\; j=0,1,2,\ldots 		
		\end{equation}	
		and the eigenvectors of $S_{d_{2}}$ corresponding to $\lambda^{j}$ coincide with those of $A$ corresponding to $\mu_{j}$, i.e. with the eigenfunctions of $(\ref{Eq:Laplace eigenvalue problem})$ corresponding to $\kappa_{j}$. However, the eigenvalues $d^{j}_{1}$ of the problem $(\ref{Eq:reduced operator equation Neumann})$ are the same as those of the operator 
		$S_{d_{2}}$ defined by (\ref{Eq:define S}) in the case $\Gamma_D\ne \emptyset$, i.e. they are given by (\ref{Eq:eigenvalues of S Dirichlet}). (There is no eigenvalue with $j=0$.)
	\end{remark}	
		
	\subsection{Maximal eigenvalues and critical points}

\begin{notation}\label{Not:denoting maximal eigenvalues}
		We will denote by $d^{MAX}_{1}$ the maximal eigenvalue of the operator $S_{d_{2}}$ or of the problem $(\ref{Eq:reduced operator equation Neumann})$ in the case $\Gamma_{D}\neq\emptyset$ or $\Gamma_{D}=\emptyset$, respectively. We will also denote by $d^{MAX,\beta}_{1}$ the maximal eigenvalue of the operator $S_{d_{2}}-\beta$ or of the problem $(\ref{Eq:reduced operator equation unilateral Neumann})$ in the case $\Gamma_{D}\neq\emptyset$ or $\Gamma_{D}=\emptyset$, respectively, if it exists.
	\end{notation}
	
	\begin{observation}\label{Ob:maximal eigenvalue of S}
		We can see from the form (\ref{Eq:eigenvalues of S Dirichlet}) of the eigenvalues $d^{j}_{1}$ (see Remarks \ref{Rem:sharing eigen of S}, \ref{Rem:eigenvalues of S}) and from $(\ref{Eq:hyperbolas})$, that a point $[d_{1},d_{2}]$ lies on a hyperbola $C_{j}$ for some $j\in\mathbb{N}$ if and only if $d_{1}$ is an eigenvalue of $S_{d_{2}}$ in the case $\Gamma_D\ne\emptyset$ or an eigenvalue of (\ref{Eq:reduced operator equation Neumann}) in the case $\Gamma_D=\emptyset$. For the maximal eigenvalue $d^{MAX}_{1}$ of $S_{d_{2}}$ in the case $\Gamma_D\ne\emptyset$ and of (\ref{Eq:reduced operator equation Neumann}) in the case $\Gamma_D=\emptyset$ we have $[d^{MAX}_{1},d_{2}]\in C_{E}$. It follows from $(\ref{Eq:eigenvalues of S Dirichlet})$ and Remark \ref{Rem:eigenvalues of S} that the operator $S_{d_{2}}$ in the case $\Gamma_D\ne\emptyset$ and the problem (\ref{Eq:reduced operator equation Neumann}) in the case $\Gamma_D=\emptyset$ have infinitely many positive eigenvalues and maximally finite number of negative eigenvalues. See also Figure $\ref{Fig:hyperbolas}$.
	\end{observation}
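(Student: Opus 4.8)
The plan is to treat the three assertions in turn, all of which reduce to reading off the explicit eigenvalue formula~(\ref{Eq:eigenvalues of S Dirichlet}). By Remark~\ref{Rem:sharing eigen of S} (for $\Gamma_D\neq\emptyset$) and the final sentence of Remark~\ref{Rem:eigenvalues of S} (for $\Gamma_D=\emptyset$), the eigenvalues of $S_{d_2}$, respectively of the problem~(\ref{Eq:reduced operator equation Neumann}), are exactly the numbers $d_1^j$ given by~(\ref{Eq:eigenvalues of S Dirichlet}), $j=1,2,\ldots$. Comparing this formula with the defining relation~(\ref{Eq:hyperbolas}) of the hyperbola $C_j$, one sees that at a fixed $d_2$ the value $d_1^j$ is precisely the $d_1$-coordinate of the unique point of $C_j$ with that second coordinate. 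Hence $[d_1,d_2]$ lies on some $C_j$ if and only if $d_1=d_1^j$ for some $j$, i.e. if and only if $d_1$ is an eigenvalue of the relevant problem; this is the first assertion.

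For the third assertion I would analyse the sign of the bracket in~(\ref{Eq:eigenvalues of S Dirichlet}). Since $b_{2,2}<0$ by~(\ref{Eq:B assumption}) and $\kappa_j\geq 0$, the denominator $d_2\kappa_j-b_{2,2}$ is strictly positive, and because $b_{1,2}b_{2,1}<0$ the fraction $\frac{b_{1,2}b_{2,1}}{d_2\kappa_j-b_{2,2}}$ is negative and, as a function of $\kappa_j$, strictly increasing towards $0$. Consequently the bracket $\frac{b_{1,2}b_{2,1}}{d_2\kappa_j-b_{2,2}}+b_{1,1}$ is strictly increasing in $\kappa_j$ and tends to $b_{1,1}>0$ as $j\to\infty$. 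Since $\kappa_j$ is non-decreasing and unbounded, the bracket is positive for all sufficiently large $j$ and can be non-positive for at most finitely many indices; multiplying by $\frac{1}{\kappa_j}>0$ (recall $\kappa_j>0$ for $j\geq 1$) shows that $d_1^j$ has the same sign as the bracket, so $d_1^j>0$ for all large $j$ and $d_1^j<0$ for at most finitely many $j$. This yields infinitely many positive and only finitely many negative eigenvalues.

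For the second assertion, recall from Remark~\ref{Rem:sharing eigen of S} that $d_1^j\to 0$ as $j\to\infty$. Combined with the third assertion, the positive eigenvalues form a sequence converging to $0$, so for every $\varepsilon>0$ only finitely many exceed $\varepsilon$; hence $\sup_j d_1^j$ is strictly positive and is actually attained, which justifies calling it the maximal eigenvalue $d_1^{MAX}$ of Notation~\ref{Not:denoting maximal eigenvalues}. Finally, at the fixed $d_2$ the envelope~(\ref{Eq:envelope}) is by definition the largest $d_1$ for which $[d_1,d_2]$ lies on some $C_j$; by the first assertion this largest value is exactly $\max_j d_1^j=d_1^{MAX}$, whence $[d_1^{MAX},d_2]\in C_E$.

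I expect the only genuine subtlety to be the attainment of the maximum in the second assertion: a priori one has only a supremum over the countable family $\{d_1^j\}$, and to upgrade it to a maximum one must combine the decay $d_1^j\to 0$ with the existence of at least one positive eigenvalue guaranteed by the third assertion. Everything else is a direct comparison of the explicit formulas~(\ref{Eq:eigenvalues of S Dirichlet}), (\ref{Eq:hyperbolas}), (\ref{Eq:envelope}) against the sign constraints~(\ref{Eq:B assumption}).
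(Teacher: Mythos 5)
Your proposal is correct and follows essentially the same route as the paper, which offers no separate proof for this Observation precisely because it is meant to be read off from the formula~(\ref{Eq:eigenvalues of S Dirichlet}) together with~(\ref{Eq:hyperbolas}), (\ref{Eq:envelope}) and Remarks~\ref{Rem:sharing eigen of S}, \ref{Rem:eigenvalues of S} --- exactly the comparison you carry out. Your extra care about attainment of the maximum (combining $d_1^j\to 0$ with the existence of at least one positive eigenvalue) is a legitimate filling-in of a detail the paper leaves implicit, not a deviation from its argument.
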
	
		
	\begin{lemma}\label{Lem:critical points vs eigenvalues}
	If $\Gamma_D\ne\emptyset$, then a point $[d_{1},d_{2}]\in\mathbb{R}^{2}_{+}$ is a critical point of the system $(\ref{Eq:stationary problem}),(\ref{Eq:zero boundary conditions})$ or $(\ref{Eq:stationary problem with unilateral terms}),(\ref{Eq:zero boundary conditions})$ if and only if $d_{1}$ is an eigenvalue of the operator $S_{d_2}$ or $S_{d_2}-\beta$, respectively.\newline	
	\indent If $\Gamma_D=\emptyset$, then a point $[d_{1},d_{2}]\in\mathbb{R}^{2}_{+}$ is a critical point of the system $(\ref{Eq:stationary problem}),(\ref{Eq:zero boundary conditions})$ or $(\ref{Eq:stationary problem with unilateral terms}),(\ref{Eq:zero boundary conditions})$ if and only if $d_{1}$ is an eigenvalue of the problem (\ref{Eq:reduced operator equation Neumann}) or (\ref{Eq:reduced operator equation unilateral Neumann}), respectively.
	\end{lemma}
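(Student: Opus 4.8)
The plan is to read the statement off the reductions already carried out in Sections~\ref{Sec:Problem with Dirichlet and Neumann boundary conditions} and~\ref{Sec:Problem with pure Neumann boundary conditions}; essentially all that is needed is the invertibility of the operator acting on $v$ in the second equation, which is exactly what was used there to eliminate $v$. First I would recall that, by the definition of weak solution adopted in Section~\ref{Sec:abstract fomulation}, a point $[d_1,d_2]$ is a critical point of $(\ref{Eq:stationary problem with unilateral terms}),(\ref{Eq:zero boundary conditions})$ precisely when the operator system $(\ref{Eq:operator formulation unilateral})$ (for $\Gamma_D\ne\emptyset$) or $(\ref{Eq:operator formulation unilateral Neumann})$ (for $\Gamma_D=\emptyset$) admits a nontrivial pair $[u,v]$, and likewise for the linear problem $(\ref{Eq:stationary problem}),(\ref{Eq:zero boundary conditions})$ via $(\ref{Eq:operator system})$, resp.\ $(\ref{Eq:operator system Neumann})$.

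Second, for the case $\Gamma_D\ne\emptyset$ I would invoke the equivalence, established in Section~\ref{Sec:Problem with Dirichlet and Neumann boundary conditions}, of the system $(\ref{Eq:operator formulation unilateral})$ with the reduced system $(\ref{Eq:system with extracted v and S unilateral})$, where $v$ is expressed from $u$ through the invertible operator $d_2 I - b_{2,2}A$. The only point requiring care is the bookkeeping of nontriviality: one must check that $[u,v]\ne 0$ is equivalent to $u\ne 0$. The implication $u\ne 0\Rightarrow[u,v]\ne 0$ is trivial, and conversely, if $u=0$ then the second equation of $(\ref{Eq:operator formulation unilateral})$ reduces to $(d_2 I - b_{2,2}A)v=0$, whence $v=0$ by invertibility, so a nontrivial pair forces $u\ne 0$. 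Thus $[d_1,d_2]$ is a critical point iff the scalar equation $(\ref{Eq:reduced operator equation unilateral})$, i.e.\ $(S_{d_2}-\beta)(u)=d_1 u$, has a nontrivial solution $u$, which is precisely the assertion that $d_1$ is an eigenvalue of $S_{d_2}-\beta$. The linear case is identical with $\beta$ deleted, using $(\ref{Eq:system with extracted v and S})$ and $(\ref{Eq:reduced operator equation})$, yielding that $d_1$ is an eigenvalue of $S_{d_2}$.

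Third, the case $\Gamma_D=\emptyset$ runs along the same lines, now using the equivalence of $(\ref{Eq:operator formulation unilateral Neumann})$ with the reduced system $(\ref{Eq:system with extracted v and S unilateral Neumann})$ from Section~\ref{Sec:Problem with pure Neumann boundary conditions}; here the operator eliminating $v$ is $d_2 I - d_2 A - b_{2,2}A$, whose invertibility is verified at the start of that section as a consequence of Remark~\ref{Rem:eigenvalues of A}, so the same nontriviality argument applies verbatim. One only notes that the relevant scalar problem $(\ref{Eq:reduced operator equation unilateral Neumann})$ carries the factor $(I-A)$ in front of $d_1 u$, so that ``$d_1$ is an eigenvalue'' must be read in the generalized sense introduced at the beginning of Section~\ref{Sec:fixed d2}, namely a parameter for which $d_1(I-A)u - S_{d_2}u + \beta(u)=0$ has a nontrivial solution, exactly as the statement demands. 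I expect no genuine obstacle beyond this nontriviality bookkeeping and keeping the four sub-cases (Dirichlet/Neumann $\times$ linear/unilateral) aligned with their respective reduced equations.
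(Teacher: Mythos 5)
Your proposal is correct and follows essentially the same route as the paper: both reduce the critical-point property to the existence of a nontrivial solution of the operator systems $(\ref{Eq:operator formulation unilateral})$, $(\ref{Eq:operator formulation unilateral Neumann})$ (resp.\ $(\ref{Eq:operator system})$, $(\ref{Eq:operator system Neumann})$) and then invoke the equivalences with the reduced single equations established in Sections~\ref{Sec:Problem with Dirichlet and Neumann boundary conditions} and~\ref{Sec:Problem with pure Neumann boundary conditions}. Your explicit bookkeeping that $[u,v]\neq 0$ forces $u\neq 0$ via the invertibility of $d_2 I - b_{2,2}A$ (resp.\ $d_2 I - d_2 A - b_{2,2}A$) merely spells out a step the paper leaves implicit, and your reading of ``eigenvalue'' in the generalized sense for the problem $(\ref{Eq:reduced operator equation unilateral Neumann})$ matches the convention set at the start of Section~\ref{Sec:fixed d2}.
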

	\begin{proof}
		Let $\Gamma_{D}\neq\emptyset$. A point $[d_{1},d_{2}]\in\mathbb{R}^{2}_{+}$ is a critical point of the system $(\ref{Eq:stationary problem}),(\ref{Eq:zero boundary conditions})$ or $(\ref{Eq:stationary problem with unilateral terms}),(\ref{Eq:zero boundary conditions})$ if and only if there exists a non-trivial solution $[u,v]$ of $(\ref{Eq:operator system})$ or $(\ref{Eq:operator formulation unilateral})$, respectively. This is true if and only if there exists a non-trivial solution $u\in H^{1}_{D}$ of $(\ref{Eq:reduced operator equation})$ or $(\ref{Eq:reduced operator equation unilateral})$, i.e. $d_{1}$ is an eigenvalue of the operator $S_{d_2}$ or $S_{d_2}-\beta$, respectively (see Section \ref{Sec:Problem with Dirichlet and Neumann boundary conditions}). The proof for the case $\Gamma_{D}=\emptyset$ is analogous, we only use (\ref{Eq:operator system Neumann}), $(\ref{Eq:operator formulation unilateral Neumann})$, $(\ref{Eq:reduced operator equation Neumann})$, $(\ref{Eq:reduced operator equation unilateral Neumann})$ and the result of Section \ref{Sec:Problem with pure Neumann boundary conditions}.
	\end{proof}
	We will use a variational characterization of the largest eigenvalue of an eigenvalue problem with a positively homogeneous operator to a study of critical points of the problem $(\ref{Eq:stationary problem with unilateral terms}),(\ref{Eq:zero boundary conditions})$. The following abstract theorem is a slight modification of the result proved for the particular case $L\equiv 0$ in \cite{Kucera-Navratil-Dirichlet} and for the general case in a forthcoming paper of J. Navr\' atil. Let's remind that $Ker(I-L)$ is the kernel of the operator $I-L$.
	\begin{theorem}\label{Thm:Navratil theorem}
		Let $H$ be a Hilbert space, $P: H\mapsto H$ a positively homogeneous, continuous operator such that 
		\begin{equation*}
			u_{n}\rightharpoonup u \implies P(u_{n})\rightarrow P(u)
		\end{equation*}
		and $L: H\mapsto H$ a linear, continuous, symmetric and compact operator. In the case $L\not\equiv 0$ we suppose that the maximal eigenvalue of $L$ is in the interval $\left(0,1 \right]$. Let there exist $u_{0}\in H, u_{0}\notin Ker(I-L)$ such that
		\begin{equation}\label{E Navratil max lambda}
		\lambda_{0} := \max\limits_{\substack{u\in H \\ u\notin Ker(I-L)}} \frac{(P(u),u)}{((I-L)u,u)} = \frac{(P(u_{0}),u_{0})}{((I-L)u_{0},u_{0})}>0
		\end{equation}
		and
		\begin{equation}\label{Eq:Navratil limit condition}
		\lim\limits_{t\rightarrow 0} \frac{1}{t}(P(u_{0}+th)-P(u_{0}),u_{0}) = (P(u_{0}),h)\quad\forall h\in H.
		\end{equation}
		Then $\lambda_{0}$ is the maximal eigenvalue of the problem 
		\begin{equation}\label{Eq:general eigenvalue problem}
		\lambda(I-L)u - P(u) = 0
		\end{equation}
		and $u_0$ is a corresponding eigenvector. If $u_1\notin Ker(I-L)$ is an arbitrary eigenvector of $(\ref{Eq:general eigenvalue problem})$ corresponding to $\lambda_0$ then it satisfies $(\ref{E Navratil max lambda})$ with $u_0$ replaced by $u_1$. 
	\end{theorem}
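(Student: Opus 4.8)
The plan is to realize $\lambda_0$ as the maximum of the generalized Rayleigh quotient
\[
R(u) := \frac{(P(u),u)}{((I-L)u,u)}, \qquad u \in H\setminus \mathrm{Ker}(I-L),
\]
and to show that its maximizer $u_0$ satisfies the Euler--Lagrange equation of this quotient, which is precisely (\ref{Eq:general eigenvalue problem}). First I would check that $R$ is well defined on all of $H\setminus \mathrm{Ker}(I-L)$: since $L$ is symmetric and compact with largest eigenvalue at most $1$ (or $L\equiv 0$), the quadratic form $u\mapsto ((I-L)u,u) = \lVert u\rVert^2 - (Lu,u)$ is non-negative and, by spectral decomposition, vanishes \emph{only} on the eigenspace of $L$ for the eigenvalue $1$, that is, on $\mathrm{Ker}(I-L)$. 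Hence the denominator is strictly positive off this closed subspace. Because $P$ is positively homogeneous, $R$ is homogeneous of degree zero, and its domain $H\setminus \mathrm{Ker}(I-L)$ is open.

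Since $u_0$ attains the maximum (\ref{E Navratil max lambda}) at an interior point of this open domain, for every fixed $h\in H$ the scalar function $\phi(t):=R(u_0+th)$ is defined for all sufficiently small $|t|$ (the complement of the closed subspace $\mathrm{Ker}(I-L)$ contains a ball about $u_0$) and has a local maximum at $t=0$. I would therefore compute the two-sided derivative $\phi'(0)$ and conclude it vanishes. Writing $N(u):=(P(u),u)$ and $D(u):=((I-L)u,u)$, the denominator is a quadratic form, so $D(u_0+th)=D(u_0)+2t((I-L)u_0,h)+t^2 D(h)$ by symmetry of $I-L$, and its directional derivative is $2((I-L)u_0,h)$. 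For the numerator I would split
\[
\frac{N(u_0+th)-N(u_0)}{t} = \frac{(P(u_0+th)-P(u_0),u_0)}{t} + (P(u_0+th),h).
\]
The first term tends to $(P(u_0),h)$ by the standing assumption (\ref{Eq:Navratil limit condition}), and the second tends to $(P(u_0),h)$ by continuity of $P$ (which follows from $u_0+th\to u_0$ together with the weak-to-strong continuity hypothesis). Thus the directional derivative of $N$ at $u_0$ equals $2(P(u_0),h)$.

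Vanishing of $\phi'(0)=\big(N'(u_0;h)D(u_0)-N(u_0)D'(u_0;h)\big)/D(u_0)^2$, after clearing the positive factor $D(u_0)$ and using $\lambda_0=N(u_0)/D(u_0)$, gives $(P(u_0),h)=\lambda_0((I-L)u_0,h)$ for every $h\in H$, i.e. $\lambda_0(I-L)u_0-P(u_0)=0$; hence $u_0$ is an eigenvector for $\lambda_0$. Maximality follows directly from the quotient: if $\lambda$ is any eigenvalue of (\ref{Eq:general eigenvalue problem}) possessing an eigenvector $u\notin \mathrm{Ker}(I-L)$, then taking the inner product of $\lambda(I-L)u-P(u)=0$ with $u$ and dividing by $((I-L)u,u)>0$ yields $\lambda=R(u)\le \lambda_0$. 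The same identity settles the last assertion: any eigenvector $u_1\notin \mathrm{Ker}(I-L)$ belonging to $\lambda_0$ satisfies $R(u_1)=\lambda_0$, so it attains the maximum in (\ref{E Navratil max lambda}).

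I expect the one delicate point to be the directional differentiability of the numerator $N(u)=(P(u),u)$. Because $P$ is merely positively homogeneous (it need be neither odd nor differentiable, as already seen for operators built from $u^+,u^-$), the quotient $\tfrac1t(P(u_0+th)-P(u_0),u_0)$ cannot be controlled by homogeneity alone, and this is exactly what hypothesis (\ref{Eq:Navratil limit condition}) is tailored to supply. A secondary matter of care is the meaning of ``maximal eigenvalue'': an eigenvector lying in $\mathrm{Ker}(I-L)$ would force $P(u)=0$ and would then be formally associated with every $\lambda$, so maximality is to be understood among eigenvalues admitting an eigenvector outside $\mathrm{Ker}(I-L)$ — which is the form in which the statement is used in the applications.
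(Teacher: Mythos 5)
Your proposal is correct and follows essentially the same route as the paper: both realize $\lambda_0$ as the maximum of the Rayleigh quotient $\frac{(P(u),u)}{((I-L)u,u)}$, use the hypothesis (\ref{Eq:Navratil limit condition}) together with continuity of $P$ to differentiate the numerator at the interior maximizer $u_0$, derive the Euler--Lagrange identity $(P(u_0),h)=\lambda_0((I-L)u_0,h)$ for all $h$, and obtain maximality and the final assertion by pairing the eigenvalue equation with its eigenvector. The only cosmetic difference is that the paper avoids forming the derivative of the quotient explicitly, instead cross-multiplying the inequality $R(u_0+th)\leq\lambda_0$ and taking one-sided limits for $t>0$ and $t<0$, which amounts to the same computation as your quotient rule at $t=0$.
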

	
	Let us note that the problem (\ref{Eq:general eigenvalue problem}) has an eigenvector in $Ker(I-L)$ only if there is $u\in Ker(I-L)$ such that $P(u)=0$. In this case any $\lambda$ is an eigenvalue.
	
	\begin{proof}\mbox{}\\
		We will assume that $L\not\equiv 0$, the case $L\equiv 0$ is simpler. Let us denote by $\mu_L^{MAX}$ the maximal eigenvalue of $L$. Since $\mu_L^{MAX} \in \left(0,1\right]$, we have $\max\limits_{\substack{u\neq 0 \\ \lVert u\rVert_{H^{1}_{D}} = 1}} (Lu,u) = \mu_L^{MAX}\leq 1$. If $\mu_L^{MAX} <1$, then $(Lu,u)<1$  and therefore $((I-L)u,u)>0$ for all $u$. If $\mu_L^{MAX}=1$, then $\max\limits_{\substack{u\neq 0 \\ \lVert u\rVert_{H^{1}_{D}} = 1}} (Lu,u) = 1$, but the maximum is attained only in the elements of $Ker(I-A)$. Hence, $((I-L)u,u)>0$ for all $u\notin Ker(I-L)$ and the expression in $(\ref{E Navratil max lambda})$ makes sense.\\		
		\indent Let $u_{0}\notin Ker(I-L)$ be arbitrary such that (\ref{E Navratil max lambda}) and (\ref{Eq:Navratil limit condition}) are fulfilled, and let $h\in H^{1}_{D}$ be arbitrary fixed. Then for $t\in\mathbb{R}$ small such that 
		$(u_{0}+th) \notin Ker(I-L)$ we have
		\begin{equation*}
		\frac{(P(u_{0}+th),u_{0}+th)}{((I-L)(u_{0}+th),u_{0}+th)} \leq \frac{(P(u_{0}),u_{0})}{((I-L)u_{0},u_{0})} =: \lambda_{0}.
		\end{equation*}
		We can rewrite this inequality as
		\begin{equation*}
		\begin{aligned}
		(P(u_{0}+th),u_{0}) + t(P(u_{0}+th),h) \leq \frac{(P(u_{0}),u_{0})}{((I-L)u_{0},u_{0})}\left[((I-L)u_{0},u_{0}) + 2t((I-L)u_{0},h) + t^2 ((I-L)h,h)\right]
		\end{aligned}	
		\end{equation*}
		and eventually as
		\begin{equation*}
		\begin{aligned}
		(P(u_{0}+th),u_{0})-(P(u_{0}),u_{0}) + t(P(u_{0}+th),h) \leq \lambda_{0}\left[2t((I-L)u_{0},h) + t^2 ((I-L)h,h)\right].
		\end{aligned}	
		\end{equation*}
		We divide it by $2t$ and get
		\begin{equation*}
		\begin{aligned}
		\frac{1}{2t}\left[(P(u_{0}+th),u_{0})-(P(u_{0}),u_{0})\right] + \frac{1}{2}(P(u_{0}+th),h)
		\leq \lambda_{0}\left[((I-L)u_{0},h) + \frac{t}{2} ((I-L)h,h)\right],\quad t>0,\\
		\frac{1}{2t}\left[(P(u_{0}+th),u_{0})-(P(u_{0}),u_{0})\right] + \frac{1}{2}(P(u_{0}+th),h)
		\geq \lambda_{0}\left[((I-L)u_{0},h) + \frac{t}{2} ((I-L)h,h)\right],\quad t<0.
		\end{aligned}	
		\end{equation*}	
		Let $t\rightarrow 0$. We use the condition $(\ref{Eq:Navratil limit condition})$ and continuity of $P$ to get
		\begin{equation*}
		\begin{aligned}
		(P(u_{0}),h)\leq \lambda_{0}((I-L)u_{0},h),\\
		(P(u_{0}),h)\geq \lambda_{0}((I-L)u_{0},h).
		\end{aligned}	
		\end{equation*}
		Since $h$ was arbitrary, we have
		\begin{equation*}
		(P(u_{0}),h) = \lambda_{0}((I-L)u_{0},h)\quad \text{ for all } h\in H^{1}_{D},
		\end{equation*}
		that means
		\begin{equation*}
		P(u_{0}) = \lambda_{0}(I-L)u_{0}.
		\end{equation*}
		Hence, the number $\lambda_{0}$ is an eigenvalue of the problem (\ref{Eq:general eigenvalue problem}) and $u_{0}$ is a corresponding eigenvector. 
		\indent	Let $\lambda_{1}$ be another eigenvalue of the problem (\ref{Eq:general eigenvalue problem}) and let $u_{1}\notin Ker(I-L)$ be a corresponding eigenvector. Then we have
		\begin{equation*}
		P(u_{1}) = \lambda_{1}(I-L)u_{1}
		\end{equation*}
		and if we multiply it by $u_{1}$ and divide by $((I-L)u_{1},u_{1})$, we get
		\begin{equation*}
		\lambda_{1} = \frac{(P(u_{1}),u_{1})}{((I-L)u_{1},u_{1})} \leq \frac{(P(u_{0}),u_{0})}{((I-L)u_{0},u_{0})} = \lambda_{0}.
		\end{equation*}
		Hence, $\lambda_{0}$ is the maximal eigenvalue. 
		If $\lambda_{1}=\lambda_{0}$, then we have equality in the last estimate, that means $u_{1}$ is a maximizer of the expression (\ref{E Navratil max lambda}). That means an arbitrary eigenvector corresponding to $\lambda_{0}$ not lying in $Ker(I-L)$ satisfies (\ref{E Navratil max lambda}) with $u_0$ replaced by $u_1$.
	\end{proof}	
	\noindent If the condition $(\ref{Eq:Navratil limit condition})$ is fulfilled for any $u_{0}$, then it actually means that $P$ has a potential $\varPhi=\frac{1}{2}(Pu,u)$. 
	\begin{remark}\label{Rem:zero operator L}
		In the particular case $L\equiv 0$, $\lambda_{0} := \max\limits_{\substack{u\in H \\ u\neq o}} \frac{(P(u),u)}{\lVert u\rVert^{2}_{H}}$ is the maximal eigenvalue of $P$.
	\end{remark}	
	\begin{theorem}\label{Thm:maximal eigenvalues of S-beta}
		Let $\Gamma_{D}\neq\emptyset$ and let $S_{d_{2}}$ be the operator from $(\ref{Eq:define S})$. If there exists a function $\varphi\in H^{1}_{D}$ such that
		\begin{equation}\label{Eq:positive supremum cond}
			(S_{d_{2}}\varphi,\varphi)-(\beta(\varphi),\varphi)>0,
		\end{equation}
		then the maximal eigenvalue of the operator $S_{d_{2}}-\beta$ is
		\begin{equation}\label{Eq:maximal eigenvalue of S-beta}
		d^{MAX,\beta}_{1} := \max\limits_{\substack{u\in H^1_D \\ u\neq o}} \frac{(S_{d_{2}}u,u)-(\beta(u),u)}{\lVert u\rVert^{2}_{H^{1}_{D}}} = \max\limits_{\substack{u\in H^1_D \\ \lVert u\rVert_{H^{1}_{D}}=1}} (S_{d_{2}}u,u)-(\beta(u),u)>0.
		\end{equation}
		Maximizers of the expression in $(\ref{Eq:maximal eigenvalue of S-beta})$ are exactly all eigenvectors of $S_{d_{2}}-\beta$ corresponding to $d^{MAX,\beta}_{1}$.
	\end{theorem}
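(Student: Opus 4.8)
The plan is to deduce the statement from the abstract variational principle of Theorem \ref{Thm:Navratil theorem}, applied in the Hilbert space $H=H^1_D$ with $L\equiv 0$ and with the positively homogeneous operator $P:=S_{d_2}-\beta$. In the case $L\equiv 0$ (see Remark \ref{Rem:zero operator L}) the eigenvalue problem (\ref{Eq:general eigenvalue problem}) reads $\lambda u - P(u)=0$, i.e. $d_1 u - (S_{d_2}-\beta)(u)=0$, which is precisely the reduced equation (\ref{Eq:reduced operator equation unilateral}); hence the maximal eigenvalue furnished by Theorem \ref{Thm:Navratil theorem} is exactly $d_1^{MAX,\beta}$, and the Rayleigh quotient in (\ref{E Navratil max lambda}) reduces to the one in (\ref{Eq:maximal eigenvalue of S-beta}). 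It therefore suffices to check that $P$ meets the hypotheses of Theorem \ref{Thm:Navratil theorem} and that the supremum defining $\lambda_0$ is attained at a nonzero element.

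First I would verify the structural assumptions on $P$. The operator $S_{d_2}$ is linear, bounded, symmetric and compact (Remark \ref{Rem:sharing eigen of S}), and $\beta$ is positively homogeneous (Lemma \ref{Lem:properties of beta}); thus $P=S_{d_2}-\beta$ is positively homogeneous and continuous. The weak-to-strong continuity $u_n\rightharpoonup u\Rightarrow P(u_n)\to P(u)$ follows by combining the compactness of $S_{d_2}$ (which sends weakly convergent sequences to strongly convergent ones) with property (\ref{Eq:beta property}) of Lemma \ref{Lem:properties of beta} for $\beta$. The limit condition (\ref{Eq:Navratil limit condition}) holds for every $u_0$, because $P$ possesses the potential $\tfrac12(S_{d_2}u,u)-\tfrac12\int_\Omega\bigl(s_+(u^+)^2+s_-(u^-)^2\bigr)\,d\Omega$: the symmetry of $S_{d_2}$ handles the linear part, while a direct computation of the directional derivative of the quadratic functional above, using that $t^{-1}\bigl((u_0+th)^+-u_0^+\bigr)\to h$ a.e. on $\{u_0>0\}$ and $t^{-1}\bigl((u_0+th)^--u_0^-\bigr)\to -h$ a.e. on $\{u_0<0\}$ together with dominated convergence in $L^2$, yields $(\beta(u_0),h)$ and hence the required identity $(P(u_0),h)$.

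The crux is the existence of a maximizer for $\lambda_0:=\sup_{\lVert u\rVert_{H^1_D}=1}\bigl[(S_{d_2}u,u)-(\beta(u),u)\bigr]$, together with $\lambda_0>0$. Positivity is immediate from the hypothesis (\ref{Eq:positive supremum cond}), since any admissible $\varphi$ normalised to the unit sphere gives a positive value of the functional. For attainment I would take a maximizing sequence $u_n$ with $\lVert u_n\rVert_{H^1_D}=1$, extract a weakly convergent subsequence $u_n\rightharpoonup u_0$ by reflexivity, and pass to the limit in $(P(u_n),u_n)$: since $P(u_n)\to P(u_0)$ strongly and $u_n\rightharpoonup u_0$, the pairing converges, so $(P(u_0),u_0)=\lambda_0$. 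Because $\lambda_0>0$ and $P(0)=0$, necessarily $u_0\neq 0$; and if $\lVert u_0\rVert_{H^1_D}<1$ held, then by positive homogeneity the normalisation $u_0/\lVert u_0\rVert_{H^1_D}$ would yield the value $\lambda_0/\lVert u_0\rVert_{H^1_D}^2>\lambda_0$ on the unit sphere, contradicting the definition of $\lambda_0$. Hence $\lVert u_0\rVert_{H^1_D}=1$ and $u_0\notin Ker(I-L)=\{0\}$ is a genuine maximizer. I expect this weak-compactness step, resting on the weak-to-strong continuity of $P$, to be the only genuinely analytical part of the argument.

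With all hypotheses verified, Theorem \ref{Thm:Navratil theorem} gives that $\lambda_0=d_1^{MAX,\beta}$ is the maximal eigenvalue of $S_{d_2}-\beta$ and that the maximizer $u_0$ is a corresponding eigenvector, proving (\ref{Eq:maximal eigenvalue of S-beta}). Finally, to obtain that the maximizers are \emph{exactly} the eigenvectors associated with $d_1^{MAX,\beta}$, I would argue both inclusions: every maximizer satisfies the limit condition (which, as noted, holds for all $u_0$), so Theorem \ref{Thm:Navratil theorem} makes it an eigenvector; conversely, the last assertion of Theorem \ref{Thm:Navratil theorem} states that any eigenvector corresponding to $\lambda_0$, which automatically lies outside $Ker(I-L)=\{0\}$, realises the maximum in (\ref{E Navratil max lambda}) and is therefore a maximizer. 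The remaining verifications are routine given Lemma \ref{Lem:properties of beta} and Remark \ref{Rem:sharing eigen of S}.
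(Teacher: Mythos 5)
Your proposal is correct and follows essentially the same route as the paper: positivity of the supremum from (\ref{Eq:positive supremum cond}), attainment via a maximizing sequence, weak compactness, the weak-to-strong continuity of $S_{d_2}-\beta$, the scaling argument forcing $\lVert u_0\rVert_{H^1_D}=1$, and then Theorem \ref{Thm:Navratil theorem} with $L\equiv 0$ (so $Ker(I-L)=\{0\}$) to identify the maximizers with the eigenvectors for $d^{MAX,\beta}_1$. The only cosmetic difference is that you verify the limit condition (\ref{Eq:Navratil limit condition}) by differentiating the explicit potential $\tfrac12(S_{d_2}u,u)-\tfrac12\int_\Omega\bigl(s_+(u^+)^2+s_-(u^-)^2\bigr)\,d\Omega$ with dominated convergence, whereas the paper cites its Appendix Lemma \ref{Lem:Navratil conditions}, which proves the same identity by a direct sign-set decomposition; both are routine and equivalent in substance.
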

	\begin{proof}\mbox{}\\
		Let's prove that the maximum in $(\ref{Eq:maximal eigenvalue of S-beta})$ exists. Let
		\begin{equation*}
		M := \sup\limits_{\substack{u\in H^1_D \\ \lVert u\rVert_{H^{1}_{D}}=1}} (S_{d_{2}}u,u)-(\beta(u),u).
		\end{equation*}
		The existence of $\varphi$ satisfying $(\ref{Eq:positive supremum cond})$ implies $M>0$. We can choose a sequence $(u_n)\subset H^{1}_{D}$ with $\lVert u_n \rVert_{H^{1}_{D}}=1$ such that
		\begin{equation}
		\lim\limits_{n\rightarrow\infty} (S_{d_{2}}u_n,u_n)-(\beta(u_n),u_n)= M.
		\end{equation}
		We can assume $u_{n}\rightharpoonup u_{0}\in H^{1}_{D}$. Since $S_{d_{2}}$ is linear and compact and $\beta$ satisfies $(\ref{Eq:beta property})$, we get
		\begin{equation}
		(S_{d_{2}}u_n,u_n)-(\beta(u_n),u_n)\rightarrow (S_{d_{2}}u_0,u_0)-(\beta(u_0),u_0) = M.
		\end{equation}
		Now we will show that $\lVert u_{0}\rVert_{H^{1}_{D}} = 1$. We know that $\lVert u_{0}\rVert_{H^{1}_{D}} \leq 1$. If $0<\lVert u_{0}\rVert_{H^{1}_{D}} < 1$, then $\left(S_{d_{2}}\frac{u_0}{\lVert u_{0}\rVert_{H^{1}_{D}}},\frac{u_0}{\lVert u_{0}\rVert_{H^{1}_{D}}}\right)-\left(\beta\left(\frac{u_0}{\lVert u_{0}\rVert_{H^{1}_{D}}}\right),\frac{u_0}{\lVert u_{0}\rVert_{H^{1}_{D}}}\right) = \frac{M}{\lVert u_{0}\rVert^{2}_{H^{1}_{D}}} > M$ due to positive homogeneity of $\beta$ (see Lemma \ref{Lem:properties of beta}), which contradicts the fact that $M$ is supremum. If $u_{0}=0$, then $M=0$, which is not the case. Therefore the last maximum in $(\ref{Eq:maximal eigenvalue of S-beta})$ exists and it is attained at $u_0$ with $\lVert u_0 \rVert_{H^{1}_{D}} = 1$. The equality between two maxima in $(\ref{Eq:maximal eigenvalue of S-beta})$ follows from the positive homogeneity of $\beta$.\\		
		\indent It is known that $P=\beta$ and therefore also $P=S_{d_{2}}-\beta$ satisfies $(\ref{Eq:Navratil limit condition})$ for any $u_0$ (see Lemma $\ref{Lem:Navratil conditions}$ in Appendix). The operator $P= S_{d_{2}}-\beta$ satisfies also the other assumptions of Theorem $\ref{Thm:Navratil theorem}$ (see Remark $\ref{Rem:sharing eigen of S}$ and Lemma $\ref{Lem:properties of beta}$). Hence, the assertions of Theorem \ref{Thm:maximal eigenvalues of S-beta} follow from Theorem $\ref{Thm:Navratil theorem}$, where we choose $L=0$, that means we have $Ker(I-L)=\{0\}$.
	\end{proof}
		
	\begin{remark}\label{Rem:Suu<0}	
		Let's consider the case $\Gamma_{D}=\emptyset$. The definition of the inner product and of the operator $A$ (Section 
		$\ref{Sec:abstract fomulation}$) give $((I-A)u,\varphi)=\int_{\Omega} (\nabla u,\nabla \varphi) \; d\Omega$ for all $u, \varphi$. It follows that $(I-A)u=0$ is equivalent to $((I-A)u,u)=0$, and this holds if and only if $u$ is a constant function. In other words, $Ker(I-A)=span\{e_0\}$, $e_0$ being the eigenfunction of (\ref{Eq:Laplace eigenvalue problem}) corresponding to $\kappa_0$. Due to Remark \ref{Rem:eigenvalues of S}, any non-trivial $u_{0}\in Ker(I-A)$ is simultaneously an eigenvector of $S_{d_{2}}$ from $(\ref{Eq:define S Neumann})$ corresponding to $\lambda^{0}$. Hence, by using (\ref{Eq:B assumption}) we get
		\begin{equation}\label{Eq:negativity of S for u constant}
		(S_{d_{2}} u_0, u_0) = (\lambda^{0} u_0, u_0) = \left( b_{1,1} +\frac{b_{1,2}b_{2,1}}{-b_{2,2}} \right)\lVert u_0 \rVert^{2}_{W^{1,2}} = \frac{-det(\mathbf{B})}{-b_{2,2}}\lVert u_0 \rVert^{2}_{W^{1,2}} < 0.
		\end{equation}
	\end{remark}
		
	\begin{theorem}\label{Thm:existence of maximum Neumann}
		Let $\Gamma_{D}=\emptyset$ and let $S_{d_{2}}$ be the operator from $(\ref{Eq:define S Neumann})$. If there exists a function $\varphi\in W^{1,2}$ satisfying $(\ref{Eq:positive supremum cond})$, then the maximal eigenvalue of the problem $(\ref{Eq:reduced operator equation unilateral Neumann})$ is
		\begin{equation}\label{Eq:maximal eigenvalue of S-beta Neumann}
		d^{MAX,\beta}_{1} := \max\limits_{\substack{u\in W^{1,2}\\u\notin Ker(I-A)}}\frac{(S_{d_{2}}u,u)-(\beta(u),u)}{((I-A)u,u)}>0.
		\end{equation}
		Maximizers of the expression in $(\ref{Eq:maximal eigenvalue of S-beta Neumann})$ are exactly all eigenvectors of the problem $(\ref{Eq:reduced operator equation unilateral Neumann})$ corresponding to $d^{MAX,\beta}_{1}$.
	\end{theorem}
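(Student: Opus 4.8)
The plan is to cast the problem (\ref{Eq:reduced operator equation unilateral Neumann}) as the abstract eigenvalue problem (\ref{Eq:general eigenvalue problem}) with the choice $L=A$ and $P=S_{d_{2}}-\beta$, and then to invoke Theorem \ref{Thm:Navratil theorem}, exactly as in the Dirichlet case (Theorem \ref{Thm:maximal eigenvalues of S-beta}) but now with the nonzero operator $L=A$ in place of $L=0$. Writing out $\lambda(I-A)u-(S_{d_{2}}-\beta)(u)=0$ reproduces $d_{1}(I-A)u-S_{d_{2}}u+\beta(u)=0$, and the Rayleigh-type quotient (\ref{E Navratil max lambda}) becomes precisely the one in (\ref{Eq:maximal eigenvalue of S-beta Neumann}). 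Most hypotheses of Theorem \ref{Thm:Navratil theorem} are routine and I would dispose of them first: $A$ is linear, symmetric and compact with maximal eigenvalue equal to $1\in(0,1]$ (Remark \ref{Rem:eigenvalues of A}); the operator $P=S_{d_{2}}-\beta$ is positively homogeneous, continuous, and maps weakly convergent sequences to strongly convergent ones, because $S_{d_{2}}$ is linear and compact (Remark \ref{Rem:eigenvalues of S}) and $\beta$ satisfies (\ref{Eq:beta property}); and the potential-type limit condition (\ref{Eq:Navratil limit condition}) holds for every $u_{0}$ since $P$ has a potential (Lemma \ref{Lem:Navratil conditions} in the Appendix, as already used in the proof of Theorem \ref{Thm:maximal eigenvalues of S-beta}).

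The real content is to verify the last remaining hypothesis, namely that
\[
M := \sup_{\substack{u\in W^{1,2}\\ u\notin Ker(I-A)}} \frac{(S_{d_{2}}u,u)-(\beta(u),u)}{((I-A)u,u)}
\]
is positive and is attained at some $u_{0}\notin Ker(I-A)$. Positivity follows from the hypothesis (\ref{Eq:positive supremum cond}): the function $\varphi$ appearing there cannot lie in $Ker(I-A)$, since for a constant $u\in Ker(I-A)\setminus\{0\}$ Remark \ref{Rem:Suu<0} gives $(S_{d_{2}}u,u)<0$ while $(\beta(u),u)\ge 0$ by (\ref{Eq:beta inner prod nonnegativity}), so the numerator is negative there; hence $\varphi\notin Ker(I-A)$, the denominator $((I-A)\varphi,\varphi)$ is positive, and the quotient at $\varphi$ is positive, giving $M>0$.

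For the attainment I would take a maximizing sequence $(u_{n})$, normalized by positive homogeneity so that $\lVert u_{n}\rVert_{W^{1,2}}=1$, and pass to a weakly convergent subsequence $u_{n}\rightharpoonup u_{0}$. Compactness of $S_{d_{2}}$ and $A$ together with (\ref{Eq:beta property}) let me pass to the limit in the numerator and in $(Au_{n},u_{n})\to(Au_{0},u_{0})$, so the denominators converge to $1-(Au_{0},u_{0})$. The main obstacle, which genuinely distinguishes this pure-Neumann case from the Dirichlet case of Theorem \ref{Thm:maximal eigenvalues of S-beta} (where the denominator was simply $\lVert u\rVert^{2}$), is the possible degeneration of $((I-A)u,u)$ as the sequence approaches $Ker(I-A)$. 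I would rule this out thus: if $1-(Au_{0},u_{0})=0$, then $(Au_{0},u_{0})=1$, which forces $\lVert u_{0}\rVert=1$ and $u_{0}\in Ker(I-A)$ (since $1$ is the maximal eigenvalue of $A$, attained only on $span\{e_{0}\}$ by Remark \ref{Rem:eigenvalues of A}); then $u_{n}\to u_{0}$ strongly and the numerator tends to $(S_{d_{2}}u_{0},u_{0})-(\beta(u_{0}),u_{0})<0$ by Remark \ref{Rem:Suu<0} and (\ref{Eq:beta inner prod nonnegativity}), so the quotient tends to $-\infty$, contradicting $M>0$. Hence $1-(Au_{0},u_{0})>0$; the numerator at $u_{0}$ then equals $M(1-(Au_{0},u_{0}))>0$, whence $u_{0}\notin Ker(I-A)$ (again by Remark \ref{Rem:Suu<0}), and since $((I-A)u_{0},u_{0})=\lVert u_{0}\rVert^{2}-(Au_{0},u_{0})\le 1-(Au_{0},u_{0})$ with positive numerator, the quotient at $u_{0}$ is $\ge M$. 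By definition of $M$ this gives equality, attainment at $u_{0}$, and incidentally $\lVert u_{0}\rVert=1$.

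With the maximum attained at $u_{0}\notin Ker(I-A)$ and $M=d^{MAX,\beta}_{1}>0$, Theorem \ref{Thm:Navratil theorem} applies and yields that $d^{MAX,\beta}_{1}$ is the maximal eigenvalue of (\ref{Eq:reduced operator equation unilateral Neumann}) with eigenvector $u_{0}$, and that every eigenvector corresponding to $d^{MAX,\beta}_{1}$ lying outside $Ker(I-A)$ is a maximizer of (\ref{Eq:maximal eigenvalue of S-beta Neumann}). To identify the maximizers with the eigenvectors it then remains to observe that no eigenvector can lie in $Ker(I-A)$: by the remark following Theorem \ref{Thm:Navratil theorem} this would require $(S_{d_{2}}-\beta)(u)=0$ for some constant $u\neq 0$, which is impossible since $((S_{d_{2}}-\beta)(u),u)<0$ there by Remark \ref{Rem:Suu<0} and (\ref{Eq:beta inner prod nonnegativity}). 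Thus the maximizers of (\ref{Eq:maximal eigenvalue of S-beta Neumann}) are exactly the eigenvectors of (\ref{Eq:reduced operator equation unilateral Neumann}) corresponding to $d^{MAX,\beta}_{1}$, which completes the argument.
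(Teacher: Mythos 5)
Your proof is correct and takes essentially the same route as the paper: the reduction to Theorem \ref{Thm:Navratil theorem} with $L=A$ and $P=S_{d_{2}}-\beta$, attainment of the supremum via a normalized maximizing sequence using compactness of $S_{d_{2}}$ and $A$ together with $(\ref{Eq:beta property})$, Remark \ref{Rem:Suu<0} and $(\ref{Eq:beta inner prod nonnegativity})$ to exclude degeneration at $Ker(I-A)$, and the same kernel argument to identify maximizers with eigenvectors. Your organization of the attainment step (a single dichotomy on $1-(Au_{0},u_{0})$ with $\lVert u_{0}\rVert_{W^{1,2}}=1$ obtained as a byproduct, plus the explicit check that $\varphi\notin Ker(I-A)$, which the paper leaves implicit) is a mild streamlining of the paper's three sequential exclusions, not a different method.
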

	\begin{proof}\mbox{}\\
		Let's denote
		\begin{equation*}
			M:= \sup\limits_{\substack{u\in W^{1,2}\\u\notin Ker(I-A)}}\frac{(S_{d_{2}}u,u)-(\beta(u),u)}{((I-A)u,u)}.
		\end{equation*}
		Since $((I-A)u,u) = \int_{\Omega} \left(\nabla u\right)^2 \; d\Omega \geq 0$ for every $u$ and we assume that there exists a function $\varphi$ satisfying $(\ref{Eq:positive supremum cond})$, we have $M>0$.\\
		\indent We can choose a sequence $u_{n}\notin Ker(I-A)$ with $\lVert u_{n} \rVert_{W^{1,2}} = 1$ such that
		\begin{equation*}
			\lim\limits_{n\rightarrow\infty} \frac{(S_{d_{2}} u_n,u_n)-(\beta(u_n),u_n)}{((I-A)u_n,u_n)} = M.
		\end{equation*}
		We can assume that $u_{n}\rightharpoonup u_{0}$. If $u_{0}= 0$, then we have
		\begin{equation*}
		((I-A)u_{n},u_{n}) = 1-(Au_{n},u_{n})\rightarrow 1-(Au_{0},u_{0}) = 1
		\end{equation*}
		due to the compactness of $A$, and
		\begin{equation*}
		(S_{d_{2}} u_n,u_n)-(\beta(u_n),u_n)\rightarrow (S_{d_{2}} u_0,u_0)-(\beta(u_0),u_0) = 0
		\end{equation*}
		by the compactness of $S_{d_{2}}$ and $(\ref{Eq:beta property})$. This means that $M=0$, which contradicts the positivity of $M$.\\
		\indent Further, let's show that $u_{0}\notin Ker(I-A)\setminus\{0\}$, i.e. $u_{0}$ is not a constant function. Let $u_{0}$ be a non-zero constant function. Then $(S_{d_{2}} u_0, u_0) <0$ by Remark \ref{Rem:Suu<0}. Since we have $-(\beta(u),u)\leq 0$ for every $u$ by $(\ref{Eq:beta inner prod nonnegativity})$, we get $(S_{d_{2}} u_0,u_0)-(\beta(u_0),u_0)<0$ and consequently
		\begin{equation*}
		\lim\limits_{n\rightarrow\infty} \frac{(S_{d_{2}} u_n,u_n)-(\beta(u_n),u_n)}{((I-A)u_n,u_n)} \leq 0.
		\end{equation*}
		That contradicts the fact that $u_{n}$ is a maximizing sequence and the supremum $M$ is positive. Hence, we have $u_{0}\notin Ker(I-L)$.\\
		\indent We need to show that $\lVert u_{0} \rVert_{W^{1,2}} = 1$. We already know that $0<\lVert u_{0} \rVert_{W^{1,2}} \leq 1$. Now let $0<\lVert u_{0} \rVert_{W^{1,2}} < 1$. We have $1-(A u_0,u_0)>0$ (see Remark \ref{Rem:eigenvalues of A}) and 
		\begin{equation*}
		 \frac{(S_{d_{2}} u_n,u_n)-(\beta(u_n),u_n)}{((I-A)u_n,u_n)}\rightarrow  \frac{(S_{d_{2}} u_0,u_0)-(\beta(u_0),u_0)}{1-(A u_0,u_0)}=M
		\end{equation*}
		by the compactness of $S_{d_{2}},A$ and the condition $(\ref{Eq:beta property})$.  Simultaneously $\lVert u_{0} \rVert^{2}_{W^{1,2}}-(A u_0,u_0)>0$ because of $u_{0}\notin Ker(I-A)$ (see Remarks \ref{Rem:eigenvalues of A} and \ref{Rem:Suu<0}). It follows that
		\begin{equation*}
		\frac{(S_{d_{2}} u_0,u_0)-(\beta(u_0),u_0)}{\lVert u_{0} \rVert^{2}_{W^{1,2}}-(A u_0,u_0)} > \frac{(S_{d_{2}} u_0,u_0)-(\beta(u_0),u_0)}{1-(A u_0,u_0)} = M>0,
		\end{equation*}
		which contradicts that fact that $M$ is a supremum. Hence, we have $\lVert u_{0} \rVert_{W^{1,2}} = 1$.\\		
		\indent We use compactness of $S_{d_{2}},A$, the property $(\ref{Eq:beta property})$ of $\beta$ and the fact that $\lVert u_{n} \rVert_{W^{1,2}} = 1 = \lVert u_{0} \rVert_{W^{1,2}}$ to get
		\begin{equation}
		\frac{(S_{d_{2}}u_n,u_n)-(\beta(u_n),u_n)}{((I-A)u_n,u_n)}\rightarrow\frac{(S_{d_{2}}u_0,u_0)-(\beta(u_0),u_0)}{((I-A)u_0,u_0)}.
		\end{equation}
		Hence, the maximum exists and it is attained at the function $u_0 \notin Ker(I-A)$ with $\lVert u_{0} \rVert_{W^{1,2}} = 1$.\\
		\indent  It is know that $P=S_{d_{2}}-\beta$ satisfies $(\ref{Eq:Navratil limit condition})$ for any $u_0$ (see Lemma $\ref{Lem:Navratil conditions}$ in Appendix). The operators $P= S_{d_{2}}-\beta$ and $L= A$ also satisfy the other assumptions of Theorem $\ref{Thm:Navratil theorem}$ (see Remark $\ref{Rem:eigenvalues of A}$, Remark $\ref{Rem:eigenvalues of S}$ and Lemma $\ref{Lem:properties of beta}$). Hence, $d^{MAX,\beta}_{1}$ is the maximal eigenvalue and $u_0$ is a corresponding eigenvector of the problem $(\ref{Eq:reduced operator equation unilateral Neumann})$ by Theorem $\ref{Thm:Navratil theorem}$.\\
		\indent Let's show that if $u_{1}$ is an arbitrary eigenvector of (\ref{Eq:reduced operator equation unilateral Neumann}) corresponding to $d^{MAX,\beta}_{1}$ then $u_{1}\notin Ker(I-A)$. If $u_{1}\in Ker(I-A)\setminus\{0\}$, then we have $-(S_{d_{2}}u_{1},u_{1})+(\beta(u_{1}),u_{1})>0$ (see $(\ref{Eq:negativity of S for u constant}$) and Lemma $\ref{Lem:properties of beta}$) and $d^{MAX,\beta}_{1}((I-A)u_{1},u_{1})=0$, which contradicts the equation (\ref{Eq:reduced operator equation unilateral Neumann}) with $u=u_1$ multiplied by $u_1$. Hence, $u_{1}\notin Ker(I-A)$, and the last assertion of Theorem \ref{Thm:existence of maximum Neumann} follows also from Theorem $\ref{Thm:Navratil theorem}$.\\		
	\end{proof}
	\begin{remark}\label{Rem:existence of maxima}\mbox{}
		The assumption $(\ref{Eq:positive supremum cond})$ is clearly satisfied if there exists $\varphi$ such that $(\beta(\varphi),\varphi)=0$ and $(S_{d_{2}}\varphi,\varphi)>0$, which is easier to verify. If there is no $\varphi$ satisfying $(\ref{Eq:positive supremum cond})$ then the supremum in proofs of Theorems \ref{Thm:maximal eigenvalues of S-beta} and \ref{Thm:existence of maximum Neumann} is not positive. It follows that there is no positive eigenvalue of the operator $S_{d_2}-\beta$ in the case $\Gamma_D \neq\emptyset$ or of the problem $(\ref{Eq:reduced operator equation unilateral Neumann})$ in the case $\Gamma_D =\emptyset$. Indeed:
		\begin{itemize}
			\item{in the case $\Gamma_D \neq\emptyset$, if $d_1>0$ were an eigenvalue, then we would have $(S_{d_2}u,u)-(\beta(u),u)=d_{1}(u,u)>0$ for the corresponding eigenvector $u$, which would contradict the non-positivity of the supremum.}
			\item{in the case $\Gamma_D =\emptyset$, an eigenvector $u$ cannot be constant (see the end of the proof of Theorem \ref{Thm:existence of maximum Neumann}), and if $u$ were non-constant, then we would have $(S_{d_2}u,u)-(\beta(u),u)=d_1((I-A)u,u)>0$ by the last assertion of Remark \ref{Rem:eigenvalues of A}, which would be a contradiction again.}
		\end{itemize}	
		It follows that in the situation of Theorem \ref{Thm:tau estimate consequence} there exists $\varphi$ satisfying $(\ref{Eq:positive supremum cond})$ because that theorem guarantees the existence of a critical point in $D_U\cup C_{E}$ and consequently also existence of a positive eigenvalue of the operator $S_{d_2}-\beta$ in the case $\Gamma_D \neq\emptyset$ or of the problem $(\ref{Eq:reduced operator equation unilateral Neumann})$ in the case $\Gamma_D =\emptyset$ (see Sections \ref{Sec:Problem with Dirichlet and Neumann boundary conditions}, \ref{Sec:Problem with pure Neumann boundary conditions}).
	\end{remark}	
	\noindent The following theorem is formulated for both cases $\Gamma_{D}\neq\emptyset$ and $\Gamma_{D}=\emptyset$.
	\begin{theorem}\label{Thm:dMAXbeta < dMAX}		
		If $[d_{1},d_{2}]$ is a critical point of $(\ref{Eq:stationary problem with unilateral terms}),(\ref{Eq:zero boundary conditions})$, then always $d_{1}\leq d^{MAX}_{1}$.
		If $[d^{MAX}_{1}, d_{2}]\in C_{i}$ exactly for $i = j,\ldots,j+k-1$, all linear combinations $e$ of $e_{j},\ldots,e_{j+k-1}$ satisfy $(\ref{Eq:main thm cond})$ and $[d_{1},d_{2}]$ is a critical point of $(\ref{Eq:stationary problem with unilateral terms}),(\ref{Eq:zero boundary conditions})$, then $d_{1}<d^{MAX}_{1}$. Moreover, if the assumption $(\ref{Eq:positive supremum cond})$ is satisfied, then $d_{1}\leq d^{MAX,\beta}_{1}< d^{MAX}_{1}$.
	\end{theorem}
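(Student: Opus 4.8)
The plan is to recast the statement entirely in terms of eigenvalue problems for the operators introduced above and to compare Rayleigh-type quotients. By Lemma~\ref{Lem:critical points vs eigenvalues}, saying that $[d_1,d_2]$ is a critical point of (\ref{Eq:stationary problem with unilateral terms}),(\ref{Eq:zero boundary conditions}) is the same as saying that $d_1$ is an eigenvalue of $S_{d_2}-\beta$ when $\Gamma_D\ne\emptyset$, or of the problem (\ref{Eq:reduced operator equation unilateral Neumann}) when $\Gamma_D=\emptyset$. I would treat both cases in parallel, writing $Q(u)$ for the denominator $\lVert u\rVert_{H^1_D}^2$ in the Dirichlet case and $((I-A)u,u)$ in the Neumann case. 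Throughout I would use the classical variational characterization of $d^{MAX}_1$, i.e.\ the special case $\beta\equiv0$ of Theorems~\ref{Thm:maximal eigenvalues of S-beta} and \ref{Thm:existence of maximum Neumann} (legitimate since $S_{d_2}$ has positive eigenvalues by Observation~\ref{Ob:maximal eigenvalue of S}): namely $(S_{d_2}u,u)\le d^{MAX}_1\,Q(u)$ for every admissible $u$, where the eigenspace of $S_{d_2}$ (resp.\ of (\ref{Eq:reduced operator equation Neumann})) for $d^{MAX}_1$ equals, by Remark~\ref{Rem:sharing eigen of S} (resp.\ Remark~\ref{Rem:eigenvalues of S}), $\mathrm{span}\{e_j,\dots,e_{j+k-1}\}$.

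For the inequality $d_1\le d^{MAX}_1$, let $u\ne0$ be an eigenvector for a critical point, so $d_1u=S_{d_2}u-\beta(u)$ (Dirichlet) or $d_1(I-A)u=S_{d_2}u-\beta(u)$ (Neumann). In the Neumann case I would first rule out $u\in Ker(I-A)$: a nonzero constant $u$ would force $S_{d_2}u=\beta(u)$, hence $(S_{d_2}u,u)=(\beta(u),u)\ge0$, contradicting $(S_{d_2}u,u)<0$ from Remark~\ref{Rem:Suu<0}; thus $Q(u)>0$. Testing the eigenvalue equation with $u$ and using $(\beta(u),u)\ge0$ from Lemma~\ref{Lem:properties of beta}(iii) gives $d_1\,Q(u)=(S_{d_2}u,u)-(\beta(u),u)\le(S_{d_2}u,u)\le d^{MAX}_1\,Q(u)$, and dividing by $Q(u)>0$ yields $d_1\le d^{MAX}_1$.

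To get the strict inequality $d_1<d^{MAX}_1$ under the additional hypotheses, I would argue by contradiction: if $d_1=d^{MAX}_1$, the chain of inequalities above collapses and forces $(\beta(u),u)=0$. Since $(\beta(u),u)=\int_\Omega s_+(u^+)^2+\int_\Omega s_-(u^-)^2$ (from the proof of Lemma~\ref{Lem:properties of beta}(iii)), this gives $s_+u^+=0$ and $s_-u^-=0$ a.e., hence both $\beta(u)=0$ and $s_-u^--s_+u^+\equiv0$. Substituting $\beta(u)=0$ into the eigenvalue equation leaves $S_{d_2}u=d^{MAX}_1u$ (resp.\ $d^{MAX}_1(I-A)u=S_{d_2}u$), so $u$ is a nonzero element of $\mathrm{span}\{e_j,\dots,e_{j+k-1}\}$; but then (\ref{Eq:main thm cond}) asserts $s_-u^--s_+u^+\not\equiv0$, a contradiction. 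For the \emph{moreover} part, if (\ref{Eq:positive supremum cond}) holds then $d^{MAX,\beta}_1$ exists and is the maximal eigenvalue of $S_{d_2}-\beta$ (Theorem~\ref{Thm:maximal eigenvalues of S-beta}, resp.\ \ref{Thm:existence of maximum Neumann}), whence $d_1\le d^{MAX,\beta}_1$; running the identical dichotomy on a maximizer $u_*$ of the $\beta$-quotient (equality $d^{MAX,\beta}_1=d^{MAX}_1$ again forces $\beta(u_*)=0$ and $u_*\in\mathrm{span}\{e_j,\dots,e_{j+k-1}\}$, contradicting (\ref{Eq:main thm cond})) gives $d^{MAX,\beta}_1<d^{MAX}_1$.

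The only genuinely delicate point is the equality analysis: I must upgrade the scalar condition $(\beta(u),u)=0$ to the operator identity $\beta(u)=0$, so that $u$ lands in the \emph{linear} eigenspace of $S_{d_2}$, which is precisely where hypothesis (\ref{Eq:main thm cond}) has teeth; and I must confirm the Neumann eigenvector is non-constant before dividing by $Q(u)$. Both steps rely only on facts already recorded — the explicit formula for $(\beta(u),u)$, the sign information in Remark~\ref{Rem:Suu<0}, and the spectral descriptions in Remarks~\ref{Rem:sharing eigen of S}, \ref{Rem:eigenvalues of S} — so once these are invoked the contradiction with (\ref{Eq:main thm cond}) is immediate.
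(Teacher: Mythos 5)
Your proof is correct, and its skeleton is the same as the paper's: reduce critical points to eigenvalues of $S_{d_2}-\beta$ (resp.\ of (\ref{Eq:reduced operator equation unilateral Neumann})) via Lemma \ref{Lem:critical points vs eigenvalues}, compare Rayleigh-type quotients for $S_{d_2}$ with and without $\beta$, and use (\ref{Eq:main thm cond}) to exclude equality. The tactical differences are genuine, though. You prove both $d_{1}\leq d^{MAX}_{1}$ and the strict inequality by testing the nonlinear eigenvalue equation directly with its own eigenvector $u$, which bypasses a step the paper cannot avoid: the paper routes every assertion through $d^{MAX,\beta}_{1}$, and must therefore first dispose of the case where no $\varphi$ satisfies (\ref{Eq:positive supremum cond}) by showing that then no critical point with the given $d_{2}$ exists at all; in your argument that case distinction is needed only for the \emph{moreover} clause, where (\ref{Eq:positive supremum cond}) is hypothesized anyway. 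In the equality analysis the paper works with a maximizer $u_{0}$ of the $\beta$-quotient, splits on $(\beta(u_{0}),u_{0})=0$ versus $(\beta(u_{0}),u_{0})>0$, and in the first case uses that equality in the Rayleigh bound forces $u_{0}$ to be an eigenvector of the linear operator $S_{d_2}$, whereupon (\ref{Eq:main thm cond}) yields $(\beta(u_{0}),u_{0})>0$, a contradiction; you instead upgrade the scalar vanishing $(\beta(u),u)=\int_{\Omega}s_{+}(u^{+})^{2}\,d\Omega+\int_{\Omega}s_{-}(u^{-})^{2}\,d\Omega=0$ to the operator identity $\beta(u)=0$ (valid precisely because $s_{\pm}\geq 0$, so $s_{+}u^{+}=s_{-}u^{-}=0$ a.e.), substitute back into the equation, and obtain $u\in\mathrm{span}\{e_{j},\dots,e_{j+k-1}\}$ from the equation itself rather than from the maximizer property, with $s_{-}u^{-}-s_{+}u^{+}\equiv 0$ contradicting (\ref{Eq:main thm cond}) directly. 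Your variant is slightly more self-contained for the first two assertions; the paper's is slightly shorter overall because once existence of $d^{MAX,\beta}_{1}$ is settled, everything follows from the single chain $d_{1}\leq d^{MAX,\beta}_{1}\leq d^{MAX}_{1}$ with strictness read off from (\ref{estSbeta}). You also correctly supply the Neumann-case safeguards that the paper performs in the proof of Theorem \ref{Thm:existence of maximum Neumann} (ruling out $u\in Ker(I-A)$ via Remark \ref{Rem:Suu<0} before dividing by $((I-A)u,u)$), so no gap remains.
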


	The assumption concerning a position of $[d^{MAX}_{1}, d_{2}]$ is fulfilled either if $[d^{MAX}_{1}, d_{2}]$ lies in fact only on one hyperbola $C_j=...C_{j+k-1}$ (the eigenvalue $\kappa_j$ has the multiplicity $k$) or in the intersection of two different hyperbolas $C_j=...C_{j+l-1}\ne C_{j+l}=...C_{j+k-1}$ ($\kappa_j$ has the multiplicity $l$, $\kappa_{j+l}$ has the multiplicity $k-l$). See also Remark \ref{Rem:conciding hyperbolas}. Cf. also comments after Theorem \ref{Thm:General case}, where the assumptions are related to a set $C^{R}_{r}$, while in Theorem \ref{Thm:dMAXbeta < dMAX} they concern only one point $[d^{MAX}_{1},d_{2}]$ with a given fixed $d_{2}$.

	\begin{proof}\mbox{}\\
		First let's consider the case $\Gamma_{D}\neq\emptyset$.\\
		Let's show that if $(\ref{Eq:positive supremum cond})$ were fulfilled with no $\varphi$ then no critical point  of $(\ref{Eq:stationary problem with unilateral terms}),(\ref{Eq:zero boundary conditions})$ with $d_2$ under consideration would exist. If $[d_{1},d_{2}]$ were a critical point with $d_{1}>0$, then $d_{1}$ would be an eigenvalue of $S_{d_{2}}-\beta$ (see Lemma $\ref{Lem:critical points vs eigenvalues}$). Hence, we would have $u$ with $\lVert u\rVert_{H^{1}_{D}} = 1$ satisfying $(\ref{Eq:reduced operator equation unilateral})$. It would follow that $(S_{d_{2}}u,u)-(\beta(u),u)=d_1\lVert u\rVert_{H^{1}_{D}}>0$ and the condition $(\ref{Eq:positive supremum cond})$ would be satisfied with $\varphi=u$, which is a contradiction.\\
		Hence, in the following we can assume that $(\ref{Eq:positive supremum cond})$ is fulfilled with some $\varphi$.
		Due to Theorem $\ref{Thm:maximal eigenvalues of S-beta}$ and Lemma $\ref{Lem:properties of beta}$ we get
		\begin{equation*}
		d^{MAX,\beta}_{1} = \max\limits_{\substack{u\in H^{1}_{D} \\ u\neq o}}\frac{(S_{d_{2}}u,u)-(\beta(u),u)}{\lVert u\rVert^{2}_{H^{1}_{D}}} \leq \max\limits_{\substack{u\in H^{1}_{D} \\ u\neq o}}\frac{(S_{d_{2}}u,u)}{\lVert u\rVert^{2}_{H^{1}_{D}}} = d^{MAX}_{1}.
		\end{equation*}			
		As above, if $[d_{1},d_{2}]$ is a critical point $(\ref{Eq:stationary problem with unilateral terms}),(\ref{Eq:zero boundary conditions})$, then $d_{1}$ is an eigenvalue of $S_{d_{2}}-\beta$ (see Lemma $\ref{Lem:critical points vs eigenvalues}$). Hence, the first assertion of Theorem \ref{Thm:dMAXbeta < dMAX} is true.\\
		There exists $u_{0}\in H^{1}_{D}$ such that
		\begin{equation*}
		d^{MAX,\beta}_{1}= \frac{(S_{d_{2}}u_{0},u_{0})-(\beta(u_{0}),u_{0})}{\lVert u_{0}\rVert^{2}_{H^{1}_{D}}}.
		\end{equation*}	
		Due to Lemma \ref{Lem:properties of beta} we have
		\begin{equation}\label{estSbeta}
		d^{MAX,\beta}_{1}\leq \frac{(S_{d_{2}}u_{0},u_{0})}{\lVert u_{0}\rVert^{2}_{H^{1}_{D}}} \leq d^{MAX}_{1}.
		\end{equation}
		Let $(\beta(u_{0}),u_{0})=0$. Let us show that then the last inequality is strict. Indeed, if we had equality in (\ref{estSbeta}), then $u_{0}$ would be an eigenvector of $S_{d_{2}}$ corresponding to $d^{MAX}_{1}$, that means a linear combination of $e_{j},\ldots,e_{j+k-1}$ (see Remark \ref{Rem:sharing eigen of S}). Hence, $(\ref{Eq:main thm cond})$ with $e$ replaced by $u$ would be fulfilled by our assumptions and we would get $(\beta(u_0),u_0)>0$. This contradiction implies that the inequality in (\ref{estSbeta}) must be strict and we get $d^{MAX,\beta}_{1}<d^{MAX}_{1}$.\\
		If $(\beta(u_{0}),u_{0})>0$, then the first inequality in (\ref{estSbeta}) is strict and consequently $d^{MAX,\beta}_{1}<d^{MAX}_{1}$ again.\\
		\indent If $[d_{1},d_{2}]$ is a critical point of the problem $(\ref{Eq:stationary problem with unilateral terms}),(\ref{Eq:zero boundary conditions})$, then $d_{1}$ is an eigenvalue of $S_{d_{2}}-\beta$ by Lemma $\ref{Lem:critical points vs eigenvalues}$ and therefore
		\begin{equation*}
		d_{1}\leq d^{MAX,\beta}_{1}<d^{MAX}_{1}.
		\end{equation*}
		\indent The proof for $\Gamma_{D}=\emptyset$ is analogous, but we must use Remark \ref{Rem:eigenvalues of S} and Theorem \ref{Thm:existence of maximum Neumann}, in particular formula (\ref{Eq:maximal eigenvalue of S-beta Neumann}) instead of (\ref{Eq:maximal eigenvalue of S-beta}).
	\end{proof}

	\section{Proofs of main results}\label{Sec:proofs}
	We will use notation from the previous sections. 
	\subsection*{Proof of Theorem $\ref{Thm:General case}$}
	\indent i) Since $d_2>0$ was arbitrary in Section \ref{Sec:fixed d2} and $[d^{MAX}_{1},d_{2}]\in C_{E}$ (see Observation \ref{Ob:maximal eigenvalue of S}), it follows from Theorem \ref{Thm:dMAXbeta < dMAX} that there are no critical points of $(\ref{Eq:stationary problem with unilateral terms}),(\ref{Eq:zero boundary conditions})$ in $D_{S}$ (see also Figure \ref{Fig:hyperbolas}). Consequently there are also no bifurcation points of $(\ref{Eq:stationary problem nonlinear with unilateral terms}),(\ref{Eq:zero boundary conditions})$ in $D_S$ (see Lemma \ref{Lem:bifurcation implies critical} in Appendix).\\
	\indent ii) Let's consider the case $\Gamma_{D}\neq\emptyset$. Let's suppose the opposite, i.e. the assumptions of the second part of Theorem $\ref{Thm:General case}$ are satisfied and there are critical points of $(\ref{Eq:stationary problem with unilateral terms}),(\ref{Eq:zero boundary conditions})$ in $C^{R}_{r}(\varepsilon)$ for every $\varepsilon>0$. We can choose a sequence $d^{n}=[d^{n}_{1},d^{n}_{2}]\in D_{U}$ and $W_{n}=[u_{n},v_{n}]$ such that $d^{n}\rightarrow d^{0}\in C^{R}_{r}$, $\lVert W_{n}\rVert=\lVert u \rVert_{H^{1}_{D}} + \lVert v \rVert_{H^{1}_{D}}\neq0$ and $d^{n},W_{n}$ satisfy $(\ref{Eq:operator formulation unilateral})$. We can assume that $\frac{W_{n}}{\lVert W_{n}\rVert}\rightharpoonup W=[w,z]$. Let's divide $(\ref{Eq:operator formulation unilateral})$ by $\lVert W_{n}\rVert$ to get
	\begin{equation}\label{Eq:compact part convergence system}
	\begin{aligned}
	d^{n}_{1} \frac{u_{n}}{\lVert W_{n}\rVert} - b_{1,1}A\frac{u_{n}}{\lVert W_{n}\rVert} - b_{1,2}A\frac{v_{n}}{\lVert W_{n}\rVert} + \beta\left(\frac{u_{n}}{\lVert W_{n}\rVert}\right) &=& 0,\\
	d^{n}_{2} \frac{v_{n}}{\lVert W_{n}\rVert} - b_{2,1}A\frac{u_{n}}{\lVert W_{n}\rVert} - b_{2,2}A\frac{v_{n}}{\lVert W_{n}\rVert} &=& 0.	
	\end{aligned}		
	\end{equation}
	By the compactness of $A$ and $(\ref{Eq:beta property})$, we get $A\frac{u_{n}}{\lVert W_{n}\rVert}\rightarrow Aw$ and $\beta\left(\frac{u_{n}}{\lVert W_{n}\rVert}\right)\rightarrow \beta(w)$, analogously for $v_{n}$ and $z$. Hence, it follows easily from $(\ref{Eq:compact part convergence system})$ that $\frac{u_{n}}{\lVert W_{n}\rVert}\rightarrow w, \frac{v_{n}}{\lVert W_{n}\rVert}\rightarrow z$ and	\begin{eqnarray}
	d^{0}_{1} w - b_{1,1}Aw - b_{1,2}Az - \beta(w) &=& 0,\nonumber\\
	d^{0}_{2} z - b_{2,1}Aw - b_{2,2}Az &=& 0\nonumber.	
	\end{eqnarray}		
	Therefore the point $d^{0}=[d^{0}_{1},d^{0}_{2}]\in C^{R}_{r}$ is a critical point of the system $(\ref{Eq:stationary problem with unilateral terms}),(\ref{Eq:zero boundary conditions})$, which contradicts Theorem $\ref{Thm:dMAXbeta < dMAX}$ for $d_{2}=d^{0}_{2}$. Hence, there exists $\varepsilon>0$ such that there are no critical points of $(\ref{Eq:stationary problem with unilateral terms}),(\ref{Eq:zero boundary conditions})$ and consequently no bifurcation points of $(\ref{Eq:stationary problem nonlinear with unilateral terms}),(\ref{Eq:zero boundary conditions})$ in $C^{R}_{r}(\varepsilon)$ (see Lemma \ref{Lem:bifurcation implies critical} in Appendix).\\
	\indent The proof for $\Gamma_{D}=\emptyset$ is analogous, we only use the system $(\ref{Eq:operator formulation unilateral Neumann})$ instead of the system $(\ref{Eq:operator formulation unilateral})$.\\
	
	\begin{flushright}
		\qedsymbol
	\end{flushright}
	
	\subsection*{Proof of Theorem $\ref{Thm:Special case Dirichlet}$}
	\begin{enumerate}[i)]
		\item{Under the assumptions about $s_\pm$, either $e = e_{1}$ or $e = -e_{1}$ satisfies $s_{-}e^{-} - s_{+}e^{+} \equiv 0$. Since any point $d\in C_{1}$ is a critical point of the problem $(\ref{Eq:stationary problem}),(\ref{Eq:zero boundary conditions})$ with a non-trivial solution 
		$\left[\frac{d_{2}\kappa_{1}-b_{2,2}}{b_{2,1}}e_{1},e_{1}\right]$ due to Remark \ref{Rem:crit point <=> lies on C_j}, it is also a critical point of the problem $(\ref{Eq:stationary problem with unilateral terms}),(\ref{Eq:zero boundary conditions})$.}
		\item{Due to the definition of $d^I_2$, for $d_2<d^I_2$ we have $[d^{MAX}_1,d_2]\in C_j$ for a finite number of indices $j>1$ (see Section \ref{Sec:basic ass a def} and also Observation \ref{Ob:maximal eigenvalue of S}). 
		Any linear combination $e$ of the eigenfunctions $e_{j},j>1$ changes the sign (see Lemma $\ref{Lem:lin comb}$ in Appendix). Hence, under the assumptions of Theorem \ref{Thm:Special case Dirichlet} we have $s_{-}e^{-} - s_{+}e^{+} \not\equiv 0$. Therefore any critical point $[d_{1},d_{2}]$ with $d_{2}<d^{I}_{2}$ of the problem $(\ref{Eq:stationary problem with unilateral terms}),(\ref{Eq:zero boundary conditions})$ satisfies $d_{1} < d^{MAX}_{1}$ by Theorem $\ref{Thm:dMAXbeta < dMAX}$.	Now, it is possible to repeat the part ii) of the proof of Theorem $\ref{Thm:General case}$.}
	\end{enumerate}
	\begin{flushright}
		\qedsymbol
	\end{flushright}
	
	\subsection*{Proof of Theorem $\ref{Thm:Special case Neumann}$}
	For any $d_2>0$ we have $[d^{MAX}_1,d_2]\in C_j$ for a finite number of indices $j>0$ (see Section \ref{Sec:basic ass a def} and also Observation \ref{Ob:maximal eigenvalue of S}). Any linear combination $e$ of the eigenfunctions $e_{j}, j>0$ changes the sign (see Lemma $\ref{Lem:lin comb}$ for details), therefore the relation $s_{-}e^{-} - s_{+}e^{+} \not\equiv 0$ is always satisfied. Hence, any critical point $[d_{1},d_{2}]$ of the problem $(\ref{Eq:stationary problem with unilateral terms}),(\ref{Eq:zero boundary conditions})$ satisfies $d_{1} < d^{MAX}_{1}$ by Theorem $\ref{Thm:dMAXbeta < dMAX}$. Now, it is possible to repeat the part ii) of the proof of Theorem $\ref{Thm:General case}$.
	\begin{flushright}
		\qedsymbol
	\end{flushright}
	
	\subsection*{Proof of Theorem $\ref{Thm:tau estimate consequence}$}
	The assumption on $j_{0}$ directly implies that the $j_{0}$-th eigenvalue of the operator $S_{d_{2}}$ in the case $\Gamma_D \neq\emptyset$ or of the problem $(\ref{Eq:reduced operator equation Neumann})$ in the case $\Gamma_D =\emptyset$ is positive (see Remarks $\ref{Rem:sharing eigen of S}$, $\ref{Rem:eigenvalues of S}$ and $(\ref{Eq:hyperbolas}),(\ref{Eq:envelope})$). Hence, we have $(S_{d_{2}}e_{j_{0}},e_{j_{0}})>0$, where $e_{j_{0}}$ is the corresponding eigenvector. Let's denote $\tau := \max\left\{\lVert s_{-}\rVert_{\infty},\lVert s_{+}\rVert_{\infty}\right\}$. We get	
	\begin{equation*}
		\begin{aligned}
		(S_{d_{2}} e_{j_{0}},e_{j_{0}})-(\beta(e_{j_{0}}),e_{j_{0}}) &= (S_{d_{2}} e_{j_{0}},e_{j_{0}}) - \int_{\Omega} s_{+}e_{j_{0}}e^{+}_{j_{0}}\ d\Omega - \int_{\Omega} -s_{-}e_{j_{0}}e^{-}_{j_{0}}\ d\Omega \geq\\
		&\geq (S_{d_{2}} e_{j_{0}},e_{j_{0}}) - \lVert s_{+}\rVert_{\infty}\int_{\Omega} (e^{+}_{j_{0}})^{2} \ d\Omega - \lVert s_{-}\rVert_{\infty}\int_{\Omega} (e^{-}_{j_{0}})^{2} \ d\Omega\\
		&\geq (S_{d_{2}} e_{j_{0}},e_{j_{0}}) - \tau\left(\int_{\Omega} (e^{+}_{j_{0}})^{2} \ d\Omega + \int_{\Omega} (e^{-}_{j_{0}})^{2} \ d\Omega\right) \geq\\
		&\geq (S_{d_{2}} e_{j_{0}},e_{j_{0}}) - \tau (A e_{j_{0}},e_{j_{0}}).
		\end{aligned}		
	\end{equation*}	
	Since $e_{j_{0}}$ is non-trivial, we have $(A e_{j_{0}}, e_{j_{0}}) > 0$. Hence, if $\tau < \frac{(S_{d_{2}} e_{j_{0}},e_{j_{0}})}{(A e_{j_{0}},e_{j_{0}})}$, then $(S_{d_{2}} e_{j_{0}},e_{j_{0}})-(\beta(e_{j_{0}}),e_{j_{0}}) > 0$.\\ 
	\indent If $\Gamma_{D}\neq\emptyset$ then we get
	\begin{equation}\label{Eq:tau estimate D-N}
	\frac{(S_{d_{2}} e_{j_{0}},e_{j_{0}})}{(A e_{j_{0}},e_{j_{0}})} = \frac{\frac{1}{\kappa_{j_{0}}}(\frac{b_{1,2}b_{2,1}}{d_{2}\kappa_{j_{0}}-b_{2,2}} + b_{1,1})\lVert e_{j_{0}} \rVert^{2}_{H^{1}_{D}}}{\frac{1}{\kappa_{j_{0}}}\lVert e_{j_{0}} \rVert^{2}_{H^{1}_{D}}} = \frac{b_{1,2}b_{2,1}}{d_{2}\kappa_{j_{0}}-b_{2,2}} + b_{1,1}
	\end{equation}
	(see Remarks $\ref{Rem:sharing eigen of S}$ and $\ref{Rem:eigenvalues of A}$) and if $\Gamma_{D}=\emptyset$ we get
	\begin{equation}\label{Eq:tau estimate N}
	\frac{(S_{d_{2}} e_{j_{0}},e_{j_{0}})}{(A e_{j_{0}},e_{j_{0}})} = \frac{\frac{1}{\kappa_{j_{0}}+1}(\frac{b_{1,2}b_{2,1}}{d_{2}\kappa_{j_{0}}-b_{2,2}} + b_{1,1})\lVert e_{j_{0}} \rVert^{2}_{H^{1}_{D}}}{\frac{1}{\kappa_{j_{0}}+1}\lVert e_{j_{0}} \rVert^{2}_{H^{1}_{D}}} = \frac{b_{1,2}b_{2,1}}{d_{2}\kappa_{j_{0}}-b_{2,2}} + b_{1,1}.
	\end{equation}
	(see Remarks $\ref{Rem:eigenvalues of S}$ and $\ref{Rem:eigenvalues of A}$). Hence, if $\tau < \frac{b_{1,2}b_{2,1}}{d_{2}\kappa_{j_{0}}-b_{2,2}} + b_{1,1}$, then the assumption $(\ref{Eq:positive supremum cond})$ of Theorems \ref{Thm:maximal eigenvalues of S-beta},\ref{Thm:existence of maximum Neumann} is satisfied with $\varphi=e_{j_0}$ and therefore $d^{MAX,\beta}_{1}>0$ exists. A point $[d^{MAX,\beta}_{1},d_{2}]$ is a critical point of $(\ref{Eq:stationary problem with unilateral terms}),(\ref{Eq:zero boundary conditions})$ by Lemma $\ref{Lem:critical points vs eigenvalues}$ and it lies in $D_{U}\cup C_{E}$ by Theorem \ref{Thm:dMAXbeta < dMAX} and because $[d^{MAX}_{1},d_{2}]\in C_{E}$ (see Observation \ref{Ob:maximal eigenvalue of S}).
	\begin{flushright}
		\qedsymbol
	\end{flushright}
	
	\subsection*{Proof of Theorem \ref{Thm:General case source on boundary}}		
	If unilateral terms in boundary conditions are considered, we replace the operators $\tilde{F}$ and $\beta$ in $(\ref{Eq:operator formulation unilateral})$,$(\ref{Eq:operator formulation unilateral nonlinear})$,$(\ref{Eq:operator formulation unilateral Neumann})$,$(\ref{Eq:operator formulation unilateral noonlinear Neumann})$ by the operator $\beta_{N}$, which has the same properties as $\beta$. Then it is necessary to repeat whole Section $\ref{Sec:fixed d2}$ for this operator. The actual proof of Theorem $\ref{Thm:General case source on boundary}$ is then the same as the proof of Theorem $\ref{Thm:General case}$.
	
	\setcounter{section}{0}
	\renewcommand{\thesection}{\Alph{section}}
	\section{Appendix}
	For a completeness we give here proofs of two standard assertions used in the text (Lemmas \ref{Lem:lin comb}, \ref{Lem:bifurcation implies critical}), and a slightly simplified proof of a result given already in \cite{Kucera-Navratil-Dirichlet} (Lemma \ref{Lem:Navratil conditions}).
	\begin{lemma}\label{Lem:lin comb}
		Any linear combination $\sum_{k = j_{0}}^{n} a_{k} e_{k},n\in\mathbb{N}$ of eigenfunctions of $(\ref{Eq:Laplace eigenvalue problem})$, where $j_{0}=2$ for $\Gamma_{D}\neq\emptyset$ and $j_{0}=1$ for $\Gamma_{D}=\emptyset$, changes the sign on the domain $\Omega$.
	\end{lemma}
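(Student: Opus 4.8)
The plan is to exploit the fact that such a combination is $L^2$-orthogonal to the principal eigenfunction, which is sign-definite, and then to argue by contradiction. Write $w := \sum_{k=j_{0}}^{n} a_{k} e_{k}$ and assume this combination is nontrivial, i.e. not all $a_{k}$ vanish (otherwise $w\equiv 0$ and there is nothing to prove, a function identically zero not being regarded as sign-changing). Recall from Remark \ref{Rem:Laplacian eigenvalues} that the principal eigenfunction $e_{*}$, where $e_{*}=e_{1}$ if $\Gamma_{D}\neq\emptyset$ and $e_{*}=e_{0}$ if $\Gamma_{D}=\emptyset$, corresponds to the simple smallest eigenvalue and does not change sign; in the mixed case it may be taken strictly positive a.e. on $\Omega$, and in the pure Neumann case it is a nonzero constant.

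First I would record the orthogonality. Every $e_{k}$ occurring in $w$ (so $k\geq j_{0}$) solves $(\ref{Eq:Laplace eigenvalue problem})$ with eigenvalue $\kappa_{k}\neq\kappa_{*}$, the first eigenvalue being simple. Green's formula together with the boundary conditions in $(\ref{Eq:Laplace eigenvalue problem})$ then gives $(\kappa_{k}-\kappa_{*})\int_{\Omega} e_{k}e_{*}\,d\Omega = 0$, hence $\int_{\Omega} e_{k}e_{*}\,d\Omega=0$ for each such $k$, and by linearity $\int_{\Omega} w\,e_{*}\,d\Omega=0$.

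Now suppose, for contradiction, that $w$ does not change sign, say $w\geq 0$ a.e. on $\Omega$ (the case $w\leq 0$ is symmetric). In the pure Neumann case $e_{*}$ is a nonzero constant, so $0=\int_{\Omega} w\,e_{*}\,d\Omega = e_{*}\int_{\Omega} w\,d\Omega$ forces $\int_{\Omega} w\,d\Omega=0$; together with $w\geq 0$ this yields $w\equiv 0$, contradicting nontriviality. In the mixed case $e_{*}>0$ a.e., so $w\,e_{*}\geq 0$ a.e. with vanishing integral forces $w\,e_{*}=0$ a.e., whence $w=0$ a.e., again a contradiction. Therefore $w$ must take both positive and negative values, which is the claim.

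The main obstacle is the single genuinely non-elementary ingredient: in the mixed case one needs $e_{1}$ to be \emph{strictly} positive almost everywhere, not merely nonnegative, for the last step to close. This is the standard positivity (and simplicity) property of the first eigenfunction of a self-adjoint elliptic eigenvalue problem and follows from the strong maximum principle via the Harnack inequality; I would invoke it rather than reprove it. The remaining steps are routine spectral theory.
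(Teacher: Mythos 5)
Your proof is correct and takes essentially the same route as the paper's: both rest on the $L^{2}$-orthogonality of the combination $w$ to the sign-definite principal eigenfunction ($e_{1}$ for $\Gamma_{D}\neq\emptyset$, the constant $e_{0}$ for $\Gamma_{D}=\emptyset$) and conclude that a one-signed $w$ with $\int_{\Omega}w\,e_{*}\,d\Omega=0$ must vanish, contradicting nontriviality. The differences are cosmetic — you derive the orthogonality via Green's formula and the eigenvalue gap $\kappa_{k}\neq\kappa_{*}$, while the paper integrates by parts against $e_{1}$ (resp.\ uses that $e_{0}$ is constant together with orthonormality in $H^{1}_{D}$), and you explicitly flag the a.e.\ strict positivity of $e_{1}$ needed to close the mixed case, a point the paper leaves implicit.
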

	\begin{proof}\mbox{}\\
		\textbf{Case $\Gamma_{D}=\emptyset$:}\newline
		Since $e_{k}$ is an orthonormal basis (see Remark $\ref{Rem:eigenvalues of A}$) and $e_{0}$ is constant, we have
		\begin{equation*}
		\int_{\Omega} e_{0}\sum_{k = j_{0}}^{n} a_{k} e_{k}\; d\Omega =  \sum_{k = j_{0}}^{n} a_{k} \int_{\Omega} e_{0}e_{k}\; d\Omega = \sum_{k = j_{0}}^{n} a_{k} \int_{\Omega} \left(\nabla e_{0}\nabla e_{k} + e_{0}e_{k}\right)\; d\Omega = \sum_{k = j_{0}}^{n} a_{k} (e_{0},e_{k})_{W^{1,2}} = 0.
		\end{equation*}
		Hence, the function $e_{0}\sum_{k = j_{0}}^{n} a_{k} e_{k}$ changes the sign on the domain $\Omega$. Since $e_{0}$ is constant, also the function $\sum_{k = j_{0}}^{n} a_{k} e_{k}$ changes the sign on $\Omega$.\\
		\textbf{Case $\Gamma_{D}\neq\emptyset$:}\newline
		We will use the eigenfunction $e_{1}$ instead of $e_{0}$. Again since $e_{k}$ is the orthonormal basis, we have
		\begin{equation*}
		\int_{\Omega} e_{1}e_{k}\; d\Omega = \int_{\Omega} -\frac{1}{\kappa_{1}}\Delta e_{1} e_{k}\; d\Omega = \frac{1}{\kappa_{1}}\int_{\Omega} \nabla e_{1} \nabla e_{k}\; d\Omega = \frac{1}{\kappa_{1}}(e_{1},e_{k})_{H^{1}_{D}} = 0\quad\text{ for any }k>1.
		\end{equation*}
		The rest is the same as in the case $\Gamma_{D}=\emptyset$.
	\end{proof}
	\begin{lemma}\label{Lem:bifurcation implies critical}
		Every bifurcation point $[d_{1},d_{2}]$ of $(\ref{Eq:stationary problem nonlinear with unilateral terms}),(\ref{Eq:zero boundary conditions})$ is also a critical point of $(\ref{Eq:stationary problem with unilateral terms}),(\ref{Eq:zero boundary conditions})$.
	\end{lemma}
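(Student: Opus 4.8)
The plan is to argue by the standard normalization (blow-up) technique, exploiting the compactness built into the operator formulation and the link between $\tilde F$ and $\beta$ recorded in Lemma \ref{Lem:properties of beta}. I would treat the case $\Gamma_{D}\neq\emptyset$ in detail; the case $\Gamma_{D}=\emptyset$ is identical after replacing the leading $u$ by $(I-A)u$. Let $d^{0}=[d^{0}_{1},d^{0}_{2}]$ be a bifurcation point of $(\ref{Eq:stationary problem nonlinear with unilateral terms}),(\ref{Eq:zero boundary conditions})$. By Definition \ref{Def:bifurcation point} there exist $d^{n}=[d^{n}_{1},d^{n}_{2}]\to d^{0}$ and $W_{n}=[u_{n},v_{n}]$ with $\lVert W_{n}\rVert=\lVert u_{n}\rVert_{H^{1}_{D}}+\lVert v_{n}\rVert_{H^{1}_{D}}\neq 0$ and $\lVert W_{n}\rVert\to 0$ solving $(\ref{Eq:operator formulation unilateral nonlinear})$. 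First I would normalize, setting $\tilde W_{n}=W_{n}/\lVert W_{n}\rVert$ so that $\lVert\tilde W_{n}\rVert=1$, and pass to a subsequence with $\tilde W_{n}=[\tilde u_{n},\tilde v_{n}]\rightharpoonup W=[w,z]$.

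Next I would divide $(\ref{Eq:operator formulation unilateral nonlinear})$ by $\lVert W_{n}\rVert$ and pass to the limit term by term. Since $A$ is compact (Remark \ref{Rem:eigenvalues of A}), we have $A\tilde u_{n}\to Aw$ and $A\tilde v_{n}\to Az$ strongly; the higher-order parts vanish, $N_{i}(u_{n},v_{n})/\lVert W_{n}\rVert\to 0$ by $(\ref{Eq:nonlinear operator limit condition})$ (Remark \ref{Rem:nonlin op limit cond}). The only delicate term is $\tilde F(u_{n})/\lVert W_{n}\rVert$, where the mismatch is that the normalization is by $\lVert W_{n}\rVert$ rather than by $\lVert u_{n}\rVert_{H^{1}_{D}}$. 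I would handle it through the decomposition $\tilde F(u_{n})/\lVert W_{n}\rVert=\beta(\tilde u_{n})+(\tilde F(u_{n})-\beta(u_{n}))/\lVert W_{n}\rVert$: the first summand tends to $\beta(w)$ by positive homogeneity of $\beta$ and $(\ref{Eq:beta property})$, while the second is dominated by $\lVert\tilde F(u_{n})-\beta(u_{n})\rVert_{H^{1}_{D}}/\lVert u_{n}\rVert_{H^{1}_{D}}$ times the bounded factor $\lVert u_{n}\rVert_{H^{1}_{D}}/\lVert W_{n}\rVert\le 1$, and this tends to $0$ by the estimate underlying Lemma \ref{Lem:properties of beta} iv), since $u_{n}\to 0$. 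The limiting system is then exactly $(\ref{Eq:operator formulation unilateral})$ at $d^{0}$ with solution $W=[w,z]$.

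The main obstacle is that weak convergence alone does not guarantee $W\neq 0$, which is precisely what is needed to call $d^{0}$ a critical point. I would overcome this by upgrading to strong convergence: rewriting the first divided equation as $d^{n}_{1}\tilde u_{n}=b_{1,1}A\tilde u_{n}+b_{1,2}A\tilde v_{n}+N_{1}(u_{n},v_{n})/\lVert W_{n}\rVert-\tilde F(u_{n})/\lVert W_{n}\rVert$, the right-hand side converges strongly by the points above, and since $d^{n}_{1}\to d^{0}_{1}>0$ (a bifurcation point lies in $\mathbb{R}^{2}_{+}$), we get $\tilde u_{n}\to w$ in $H^{1}_{D}$; the second equation yields $\tilde v_{n}\to z$ analogously. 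Passing to the limit in $\lVert\tilde u_{n}\rVert_{H^{1}_{D}}+\lVert\tilde v_{n}\rVert_{H^{1}_{D}}=1$ gives $\lVert w\rVert_{H^{1}_{D}}+\lVert z\rVert_{H^{1}_{D}}=1$, so $W\neq 0$. Hence $W$ is a non-trivial solution of $(\ref{Eq:stationary problem with unilateral terms}),(\ref{Eq:zero boundary conditions})$ at $d^{0}$, i.e. $d^{0}$ is a critical point. For $\Gamma_{D}=\emptyset$ the same computation applies to $(\ref{Eq:operator formulation unilateral noonlinear Neumann})$: one isolates $d^{n}_{1}\tilde u_{n}$ (resp. $d^{n}_{2}\tilde v_{n}$) on the left and absorbs the additional $d^{n}_{i}A$ term into the compact part, so that $d^{n}_{i}\to d^{0}_{i}>0$ again forces strong convergence.
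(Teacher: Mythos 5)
Your proof is correct and follows essentially the same route as the paper's: normalize by $\lVert W_{n}\rVert$, divide $(\ref{Eq:operator formulation unilateral nonlinear})$ by it, pass to the limit using compactness of $A$, the condition $(\ref{Eq:nonlinear operator limit condition})$ and the link between $\tilde{F}$ and $\beta$, and then isolate $d^{n}_{1}\tilde{u}_{n}$ (using $d^{0}_{1},d^{0}_{2}>0$) to upgrade weak to strong convergence and conclude $\lVert W\rVert=1$, so $W\neq 0$. The only difference is minor but welcome: the paper invokes Lemma \ref{Lem:properties of beta}~iv) directly even though that statement normalizes by $\lVert u_{n}\rVert_{H^{1}_{D}}$ rather than $\lVert W_{n}\rVert$, whereas your decomposition $\tilde{F}(u_{n})/\lVert W_{n}\rVert=\beta(\tilde{u}_{n})+\bigl(\tilde{F}(u_{n})-\beta(u_{n})\bigr)/\lVert W_{n}\rVert$, together with the bound $\lVert u_{n}\rVert_{H^{1}_{D}}/\lVert W_{n}\rVert\leq 1$, makes this step explicit.
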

	\begin{proof}\mbox{}\\
		We will show the proof for $\Gamma_{D}\neq\emptyset$. The proof for $\Gamma_{D}=\emptyset$ is the same, we only use the system $(\ref{Eq:operator formulation unilateral noonlinear Neumann})$ instead of the system $(\ref{Eq:operator formulation unilateral nonlinear})$.\\
		\indent Let $d^{0}=[d_{1},d_{2}]\in\mathbb{R}^{2}_{+}$ be a bifurcation point of $(\ref{Eq:stationary problem nonlinear with unilateral terms}),(\ref{Eq:zero boundary conditions})$. Then there exists a sequence $d^{n}=[d^{n}_{1},d^{n}_{2}]$ such that $d^{n}\rightarrow d^{0}$ and $W_{n}=[u_{n},v_{n}]\rightarrow 0$ with $\lVert W_{n}\rVert=\lVert u \rVert_{H^{1}_{D}} + \lVert v \rVert_{H^{1}_{D}}\neq0$ and $d^{n},W_{n}$ satisfy $(\ref{Eq:stationary problem nonlinear with unilateral terms}),(\ref{Eq:zero boundary conditions})$, i.e. $(\ref{Eq:operator formulation unilateral nonlinear})$. We can assume $\frac{W_{n}}{\lVert W_{n}\rVert}\rightharpoonup W=[w,z]$. Let's divide the system $(\ref{Eq:operator formulation unilateral nonlinear})$ by $\lVert W_{n}\rVert$. We get
		\begin{equation}\label{Eq:bif crit lemma convergence}
		\begin{aligned}
		d^{n}_{1} \frac{u_{n}}{\lVert W_{n}\rVert} - b_{1,1}A\frac{u_{n}}{\lVert W_{n}\rVert} - b_{1,2}A\frac{v_{n}}{\lVert W_{n}\rVert} - \frac{N_{1}(u_{n},v_{n})}{\lVert W_{n}\rVert} +\frac{\tilde{F}(u_{n})}{\lVert W_{n}\rVert} &=& 0,\\
		d^{n}_{2} \frac{v_{n}}{\lVert W_{n}\rVert} - b_{2,1}A\frac{u_{n}}{\lVert W_{n}\rVert} - b_{2,2}A\frac{v_{n}}{\lVert W_{n}\rVert} - \frac{N_{2}(u_{n},v_{n})}{\lVert W_{n}\rVert} &=& 0
		\end{aligned}	
		\end{equation}
		due to linearity of $A$. Due to $(\ref{Eq:nonlinear operator limit condition})$ we have $\frac{N_{j}(u_{n},v_{n})}{\lVert W_{n}\rVert}\rightarrow 0$ as $n\rightarrow+\infty$ for $j=1,2$. Since $\frac{u_{n}}{\lVert W_{n}\rVert}\rightharpoonup w$ and $\frac{v_{n}}{\lVert W_{n}\rVert}\rightharpoonup z$, using compactness of $A$ and $(\ref{Eq:beta F relation})$ we get $A\frac{u_{n}}{\lVert W_{n}\rVert}\rightarrow Aw$ and $\frac{\tilde{F}(u_{n})}{\lVert W_{n}\rVert}\rightarrow \beta(w)$, analogously for $v_n$ and $z$. We have $d_{1},d_{2}>0$, therefore it follows from $(\ref{Eq:bif crit lemma convergence})$ that $\frac{u_{n}}{\lVert W_{n}\rVert}\rightarrow w, \frac{v_{n}}{\lVert W_{n}\rVert}\rightarrow z$, $\lVert W\rVert = 1$ and
		\begin{eqnarray}
		d^{0}_{1} w - b_{1,1}Aw - b_{1,2}Az + \beta(w) &=& 0,\nonumber\\
		d^{0}_{2} z - b_{2,1}Aw - b_{2,2}Az &=& 0\nonumber.	
		\end{eqnarray}		
		Therefore the point $d^{0}$ is a critical point of the system $(\ref{Eq:stationary problem with unilateral terms}),(\ref{Eq:zero boundary conditions})$.
	\end{proof}
	\begin{lemma}[see \cite{Kucera-Navratil-Dirichlet}]\label{Lem:Navratil conditions}
		For any $d_{2}>0$ and $u_{0}\in H^{1}_{D}$ the operators $P\equiv\beta$ and $P\equiv S_{d_{2}}-\beta$ satisfy the condition $(\ref{Eq:Navratil limit condition})$.
	\end{lemma}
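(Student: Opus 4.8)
The plan is to reduce the whole statement to the single nonlinear operator $\beta$ and then verify the limit relation $(\ref{Eq:Navratil limit condition})$ by a pointwise-plus-dominated-convergence argument. Both sides of $(\ref{Eq:Navratil limit condition})$ are linear in $P$, and $S_{d_{2}}$ is linear, bounded and symmetric (Remarks \ref{Rem:sharing eigen of S}, \ref{Rem:eigenvalues of S}), so the relation holds for $P\equiv S_{d_{2}}$ automatically: indeed $\frac{1}{t}(S_{d_{2}}(u_{0}+th)-S_{d_{2}}(u_{0}),u_{0})=(S_{d_{2}}h,u_{0})=(S_{d_{2}}u_{0},h)$ by symmetry, which is exactly the right-hand side. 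Hence it suffices to prove $(\ref{Eq:Navratil limit condition})$ for $P\equiv\beta$, and the case $P\equiv S_{d_{2}}-\beta$ follows by subtraction.

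First I would write $\beta$ out explicitly, recalling from $(\ref{Eq:define betas})$--$(\ref{Eq:define beta})$ that $(\beta(u),\varphi)=\int_{\Omega} s_{+}u^{+}\varphi\,d\Omega-\int_{\Omega} s_{-}u^{-}\varphi\,d\Omega$, and form the difference quotient
\[
\frac{1}{t}\big(\beta(u_{0}+th)-\beta(u_{0}),u_{0}\big)=\int_{\Omega} s_{+}\,\frac{(u_{0}+th)^{+}-u_{0}^{+}}{t}\,u_{0}\,d\Omega-\int_{\Omega} s_{-}\,\frac{(u_{0}+th)^{-}-u_{0}^{-}}{t}\,u_{0}\,d\Omega.
\]
The core observation is that for almost every $\mathbf{x}$ with $u_{0}(\mathbf{x})\neq0$ the sign of $(u_{0}+th)(\mathbf{x})$ coincides with that of $u_{0}(\mathbf{x})$ once $|t|$ is small enough; consequently the difference quotient of the positive part tends to $h$ where $u_{0}>0$ and to $0$ where $u_{0}<0$, and symmetrically for the negative part. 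On the set $\{u_{0}=0\}$ the integrands vanish identically because of the factor $u_{0}$. Collecting these limits gives the pointwise limit $s_{+}u_{0}^{+}h-s_{-}u_{0}^{-}h$ of the integrand, which integrates to $(\beta(u_{0}),h)$.

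To turn pointwise convergence into convergence of the integrals I would invoke the dominated convergence theorem. The decisive estimate is that $\xi\mapsto\xi^{\pm}$ is $1$-Lipschitz, so $|(u_{0}+th)^{\pm}-u_{0}^{\pm}|\le|t|\,|h|$ and hence the difference quotients are bounded by $|h|$ uniformly in $t$; the integrands are then dominated by $\lVert s_{\pm}\rVert_{\infty}|h|\,|u_{0}|$, which lies in $L^{1}(\Omega)$ thanks to the continuous embedding $H^{1}_{D}\hookrightarrow L^{2}$ (so that $h,u_{0}\in L^{2}$ and their product is integrable). Passing to the limit therefore yields $(\ref{Eq:Navratil limit condition})$ for $\beta$, and combining with the linear part finishes the argument.

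The only genuine subtlety, and what I expect to be the main obstacle, is the non-differentiability of $\xi\mapsto\xi^{+}$ at the origin. This is precisely why the relation is phrased with the increment paired against $u_{0}$ rather than a general element: the troublesome set $\{u_{0}=0\}$, where the quotient $(th)^{+}/t$ fails to converge, is annihilated by the factor $u_{0}$ and never enters the limit. Making this cancellation explicit, together with the uniform Lipschitz bound that supplies the dominating function, is the whole content of the proof.
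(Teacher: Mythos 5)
Your proof is correct, and it reaches the conclusion by a somewhat more streamlined route than the paper. The paper proves $(\ref{Eq:Navratil limit condition})$ for $\beta^{-}$ alone (with $\beta^{+}$ declared analogous and the $S_{d_{2}}$ part handled by linearity, as you do), and it works by decomposing $\Omega$ into the sets $\Omega_{0}$, $\Omega_{th}$ where $u_{0}$ and $u_{0}+th$ are negative, then further into $\Omega_{th1},\Omega_{th2},\Omega_{th3}$: the term $\int_{\Omega_{th}}u_{0}h$ is treated by dominated convergence of characteristic functions $\chi_{th}\to\chi_{0}$ (valid off $\{u_{0}=0\}$, with that set annihilated by the factor $u_{0}$, exactly your observation), while the quadratic term $\frac{1}{t}\bigl(\int_{\Omega_{th2}}u_{0}^{2}-\int_{\Omega_{th3}}u_{0}^{2}\bigr)$ is killed by the hands-on bound $u_{0}^{2}\le t^{2}h^{2}$ on the transition sets where $u_{0}$ and $u_{0}+th$ have opposite signs. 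Your single dominated-convergence pass, with the $1$-Lipschitz estimate $|(u_{0}+th)^{\pm}-u_{0}^{\pm}|\le|t||h|$ supplying the dominating function $\lVert s_{\pm}\rVert_{\infty}|h||u_{0}|\in L^{1}$, subsumes both of the paper's steps at once: the transition-set estimate is precisely the Lipschitz bound in disguise, and your pointwise sign-stabilization argument on $\{u_{0}\ne 0\}$ replaces the convergence of characteristic functions. What your version buys is brevity, a symmetric treatment of $\beta^{+}$ and $\beta^{-}$ in one computation, and it also avoids a small sign-bookkeeping issue the paper glosses over (its final estimate as written only gives an upper bound for $t>0$ and needs the two-sided argument to conclude the limit is zero); what the paper's explicit decomposition buys is that one sees concretely which region of $\Omega$ is responsible for the failure of Fr\'echet differentiability. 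Your identification of the factor $u_{0}$ annihilating $\{u_{0}=0\}$ as the reason the condition is stated with the increment paired against $u_{0}$ is exactly the mechanism the paper exploits.
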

	\begin{proof}\mbox{}\\
		We will prove $(\ref{Eq:Navratil limit condition})$ for $P=\beta^{-}$. The proof for $\beta^{+}$ is analogous and for $P\equiv S_{d_{2}}-\beta$ it will follow by using the definition of $\beta$ and linearity of $S_{d_2}$.	\\		
		Let $u_0 ,h\in H^{1}_{D}$. We will introduce two sets $\Omega_{0}$ and $\Omega_{th}$ such that
		\begin{equation*}
		\begin{aligned}
		u_{0}(\mathbf{x})  < 0\text{ a.e. on }\Omega_{0},&\quad u_{0}(\mathbf{x}) \geq 0\text{ a.e. on }\Omega\setminus\Omega_{0},\\
		u_{0}(\mathbf{x}) +th(\mathbf{x}) < 0\text{ a.e. on }\Omega_{th},&\quad u_{0}(\mathbf{x}) +th(\mathbf{x}) \geq 0\text{ a.e. on }\Omega\setminus\Omega_{th}.
		\end{aligned}
		\end{equation*}		
		Then
		\begin{equation*}
		\begin{aligned}
		\frac{1}{t}(\beta^{-}(u_{0}+th)-\beta^{-}(u_{0}),u_{0})& -(\beta^{-}(u_{0}),h) =\\
		&= \frac{1}{t}\left[\int_{\Omega}-(u_{0}+th)^{-}u_{0} + u_{0}u^{-}_{0}\, d\Omega  \right] -\int_{\Omega} -u^{-}_{0}h \, d\Omega =\\
		&= \frac{1}{t}\left[\int_{\Omega_{th}} (u_{0}+th)u_{0}\, d\Omega_{th} - \int_{\Omega_{0}}  u^{2}_{0}\, d\Omega_{0} \right] -\int_{\Omega_{0}} u_{0}h \, d\Omega_{0} =\\
		&= \frac{1}{t}\left[\int_{\Omega_{th}} u^{2}_{0}\, d\Omega_{th} - \int_{\Omega_{0}}  u^{2}_{0}\, d\Omega_{0} \right] + \int_{\Omega_{th}} u_{0}h\,  d\Omega_{th} -\int_{\Omega_{0}} u_{0}h \, d\Omega_{0}.
		\end{aligned}
		\end{equation*}
		We can afford to work with the definition of $\beta^{-}$ without $s_{-}$, because it is non-negative, i.e. it does not affect the sign of terms under integration.\\
		Let $\chi_{th}$ and $\chi_{0}$ be the characteristic function of $\Omega_{th}$ and $\Omega_{0}$, respectively. We have
		\begin{equation*}
		\lim\limits_{t\to 0}\int_{\Omega_{th}} u_{0}h\,d\Omega_{th} = \lim\limits_{t\to 0}\int_{\Omega} u_{0}h\chi_{th}\,d\Omega = \int_{\Omega} u_{0}h\chi_{0}\,d\Omega = \int_{\Omega_{0}} u_{0}h\,d\Omega_{0}
		\end{equation*}
		by Dominated Convergence theorem. Let's introduce sets $\Omega_{th1},\Omega_{th2},\Omega_{th3}$ such that	
		\begin{equation*}
		\begin{aligned}
		u_{0}(\mathbf{x}) < -t h(\mathbf{x})\text{ and }u_{0}(\mathbf{x})< 0\text{ almost everywhere on }\Omega_{th1},\\
		u_{0}(\mathbf{x}) < -t h(\mathbf{x})\text{ and }u_{0}(\mathbf{x})\geq 0\text{ almost everywhere on }\Omega_{th2},\\
		u_{0}(\mathbf{x}) \geq -t h(\mathbf{x})\text{ and }u_{0}(\mathbf{x})< 0\text{ almost everywhere on }\Omega_{th3},
		\end{aligned}
		\end{equation*}
		with $\Omega_{th} = \Omega_{th1} \cup \Omega_{th2}$ and $\Omega_{0} = \Omega_{th1} \cup \Omega_{th3}$. This way we get
		\begin{equation*}
		\begin{aligned}
		\int_{\Omega_{th}} u^{2}_{0}\,d\Omega_{th} - \int_{\Omega_{0}} u^{2}_{0}\,d\Omega_{0} &= \int_{\Omega_{th1}} u^{2}_{0}\,d\Omega_{th1} + \int_{\Omega_{th2}} u^{2}_{0}\,d\Omega_{th2} - \int_{\Omega_{th1}} u^{2}_{0}\,d\Omega_{th1} - \int_{\Omega_{th3}} u^{2}_{0}\,d\Omega_{th3} =\\
		&= \int_{\Omega_{th2}} u^{2}_{0}\,d\Omega_{th2} - \int_{\Omega_{th3}} u^{2}_{0}\,d\Omega_{th3}.
		\end{aligned}		
		\end{equation*}
		Since $0\leq u_{0} < -t h$ a.e. on $\Omega_{th2}$ and $0 > u_{0} \geq -th$ a.e. on $\Omega_{th3}$, we get
		\begin{equation*}
		\lim\limits_{t\to 0} \frac{1}{t}\left( \int_{\Omega_{th2}} u^{2}_{0}\,d\Omega_{th2} - \int_{\Omega_{th3}} u^{2}_{0}\,d\Omega_{th3}\right) \leq \lim\limits_{t\to 0} \frac{1}{t}  \left(\int_{\Omega_{th2}} (th)^{2}\,d\Omega_{th2} - \int_{\Omega_{th3}} (th)^{2}\,d\Omega_{th3}\right) = 0.
		\end{equation*}
		Hence, it follows from the discussion above that
		\begin{equation*}
		\lim\limits_{t\to 0} \frac{1}{t}(\beta^{-}(u_{0}+th) -\beta^{-}(u_{0}),u_{0})-(\beta^{-}(u_{0}),h) =0
		\end{equation*}
		which proves $(\ref{Eq:Navratil limit condition})$ for $\beta^{-}$. 		
	\end{proof}

	\setcounter{section}{3}
	\renewcommand{\thesection}{\arabic{section}}
	\section{Acknowledgement}
	M.Fencl has been supported by the project SGS-2016-003 of University of West Bohemia and the project LO1506 of the Czech Ministry of Education, Youth and Sport. M. Ku\v cera has been supported by RVO:67985840.
	\section{Bibliography}

\end{document}